\newif\ifsiam
\newif\ifarxiv
\setlist[enumerate]{leftmargin=.5in}
\setlist[itemize]{leftmargin=.5in}
\crefname{hypothesis}{Hypothesis}{Hypotheses}
\title{Periodic Center Manifolds for Nonhyperbolic Limit Cycles in ODEs}
\author{Bram Lentjes\thanks{Department of Mathematics, Hasselt University, Diepenbeek Campus, 3590 Diepenbeek, Belgium \email{(bram.lentjes@uhasselt.be)}.}
	\and Mattias Windmolders\thanks{Department of Mathematics, KU Leuven, 3000 Leuven, Belgium  \email{(mattias.windmolders@student.kuleuven.be)}.}
	\and Yuri A. Kuznetsov\thanks{Department of Mathematics, Utrecht University, 3508 TA Utrecht, The Netherlands and Department of Applied
		Mathematics, University of Twente, 7500 AE Enschede, The Netherlands \email{(i.a.kouznetsov@uu.nl)}.}
}
\DeclareMathOperator{\spn}{span}
\DeclareMathOperator{\ran}{ran}
\DeclareMathOperator{\loc}{loc}
\DeclareMathOperator{\BC}{BC}
\DeclareMathOperator{\eva}{ev}
\begin{document}
	\maketitle
	
	\begin{abstract}
		In this paper, we deal with a classical object, namely, a nonhyperbolic limit cycle in a system of smooth autonomous ordinary differential equations. While the existence of a center manifold near such a cycle was assumed in several studies on cycle bifurcations based on periodic normal forms, no proofs were available in the literature until recently. The main goal of this paper is to give an elementary proof of the existence of a periodic smooth locally invariant center manifold near a nonhyperbolic cycle in finite-dimensional ordinary differential equations by using the Lyapunov-Perron method. In addition, we provide several explicit examples of analytic vector fields admitting (non)-unique, (non)-$C^{\infty}$-smooth and (non)-analytic periodic center manifolds.
	\end{abstract}
	
	\begin{keywords}
		Center manifold theorem, nonhyperbolic cycles, ordinary differential equations
	\end{keywords}
	
	\begin{MSCcodes}
		34C25, 34C45, 37G15
	\end{MSCcodes}
	\begin{sloppypar}
		
		\section{Introduction}
		Center manifold theory is without doubt one of the most well-known and powerful techniques to study local bifurcations of dynamical systems \cite{Guckenheimer1983,Kuznetsov2023a}. In its simplest form, center manifold theory allows us to analyze the behavior of a complicated high-dimensional nonlinear dynamical system near a bifurcation by reducing the system to a low-dimensional invariant manifold, called the center manifold. 
		
		The center manifold theorem for finite-dimensional ordinary differential equations (ODEs) near a nonhyperbolic equilibrium has first been proved in \cite{Pliss1964,Kelley1967} and developed further in \cite{Hirsch1977,Carr1981}. Over the years, the existence of a center manifold near a nonhyperbolic equilibrium has been established for va\-ri\-ous other classes of dynamical systems by employing different techniques, such as, for example, the graph transform \cite{Shub1987,Sandstede2015}, the parametrization method \cite{Cabre2003,Berg2020} and the Lyapunov-Perron method \cite{Vanderbauwhede1989,Haragus2011,Eldering2013}. Mainly this last method has been proven to be very successful in the setting of infinite-dimensional dynamical systems. For example, the center manifold theorem for equilibria has been obtained under va\-ri\-ous assumptions for ODEs in Banach spaces \cite{Vanderbauwhede1987,Chow1988,Vanderbauwhede1989,Vanderbauwhede1992,Haragus2011}, partial differential equations \cite{Henry1981,Kirchgaessner1982,Mielke1986,Mielke1988,Bates1989}, stochastic dynami\-cal systems \cite{Boxler1989,Du2006,Chen2015,Chen2018,Li2022}, classical delay (differential) equations \cite{Chafee1971,Diekmann1991a,Hale1993,Diekmann1995,Bosschaert2020}, renewal equations \cite{Diekmann1991a,Diekmann1995,Diekmann2008}, abstract delay (differential) equations \cite{Janssens2020}, impulsive delay differential equations \cite{Church2021}, mixed functional differential and difference equations \cite{Hupkes2006,Hupkes2008a}. Various interesting and important qualitative properties of center manifolds for equilibria can be found in \cite{Sijbrand1985} and an extensive literature overview on such manifolds in various classes of dynamical systems can be found in \cite{Osipenko2009}. In all cases, the dimension of the center manifold is equal to the number of the critical eigenvalues of the equilibrium, i.e. those with zero real parts.
		
		The natural question arises if the whole center manifold construction can be repeated for nonhyperbolic periodic orbits (cycles) in various classes of dynamical systems. While the literature for center manifolds for equilibria is extensive, the same cannot be said for center manifolds near cycles. A first proof on the existence and smoothness of a center manifold for periodic mixed functional differential equations was given in \cite{Hupkes2008} and has been later adapted in \cite{Church2018,Church2021} to the setting of periodic impulsive delay differential equations. Recently, in \cite{Lentjes2023a}, the existence of a smooth periodic finite-dimensional center manifold near a cycle for classical delay differential equations has been established using the general sun-star calculus framework \cite{Clement1987,Clement1988,Clement1989,Clement1989a,Diekmann1991,Diekmann1995}, which expands its applicability to various other classes of delay equations. Here, the dimension of the center manifold is also determined by the number of the critical multipliers of the cycle, including the trivial  (equal to one) multiplier.
		
		However, as the state space in all mentioned references on this topic is infinite-dimensional, many proofs are rather involved as one must rely on non-trivial functional analytic techniques. While the resulting center manifold theorems could be applied to finite-dimensional ODEs without delays, this is certainly a redundant overkill. The main goal of this paper is to directly state and prove a center manifold theorem for cycles in finite-dimensional ODEs, using only elementary tools. Essentially, the proofs below are rather straightforward adaptations of those from \cite{Lentjes2023a} in a much simpler finite-dimensional context. We already remark that our exposition is based on the classical Lyapunov-Perron method as a variation of constants formula is easily available in this setting. 
		
		To study stability and bifurcations of limit cycles in ODEs, one can alternatively work with a Poincar\'{e} map on a cross-section to the cycle \cite{Guckenheimer1983,Kuznetsov2023a}. In most cases, this is sufficient, but then we miss one dimension, i.e. the phase coordinate along the cycle. It should also be noted immediately that the existence of a smooth (non-unique) center manifold for the fixed point of the Poincar\'{e} map on a cross-section to the cycle does not imply directly the existence of a smooth center manifold in a tubular neighborhood of the cycle.\footnote{Notice that one can deduce the existence of the unique stable and unstable manifold near a hyperbolic cycle from the fixed point of the Poincar\'{e} map, see \cite[Theorem 10.3.2]{Hale1993}.} A motivation in keeping this phase dimension is to directly obtain all information of the dynamics near the cycle. 
		
		We fill two important gaps in the literature. First, from a theoretical point of view, the results from \cite{Iooss1988,Iooss1999} on the existence of a special coordinate system on the center manifold that allows us to describe the local dynamics near a bifurcating cycle in terms of so-called periodic normal forms, rely heavily on the local invariance and smoothness properties of the center manifold. However, no proof nor a reference towards the literature has been provided which ensures the existence of a periodic sufficiently smooth center manifold near a nonhyperbolic cycle. Second, from a more practical point of view, many researchers use nowadays the well-known software package \verb|MatCont| \cite{Dhooge2003,Dhooge2008} to study codimension one and two bifurcations of limit cycles in finite-dimensional ODEs. In particular, if one is interested in determining the nature (subcritical, supercritical or degenerate) of a bifurcation, one should compute the critical normal form coefficients of an associated periodic normal form. However, the computation of these coefficients in \verb|MatCont| employs a combination of the periodic normalization method \cite{Kuznetsov2005,Witte2013,Witte2014}, again based on the smoothness and local invariance of the center manifold, with the special coordinate system and the periodic normal forms mentioned above. 
		
		\subsection{Statement of the main theorem}
		Let $f: \mathbb{R}^{n} \rightarrow \mathbb{R}^n$ be a $C^{k+1}$-smooth vector field for some finite $k \geq 1$ and consider the ordinary differential equation
		\begin{equation} \label{eq:ODE} \tag{ODE}
			\dot{x}(t) = f(x(t)),
		\end{equation}
		where $x(t) \in \mathbb{R}^n$. Assume that \eqref{eq:ODE} admits a $T$-periodic solution $\gamma$ for some (minimal) $T > 0$ and let $\Gamma := \gamma(\mathbb{R})$ denote the associated (limit) \emph{cycle}. Consider now the variational equation around $\Gamma$
		\begin{equation} \label{eq:variational}
			\dot{y}(t) = A(t)y(t),
		\end{equation}
		where $A(t) := Df(\gamma(t))$ and $y(t) \in \mathbb{R}^n$. The unique (global) solution $y$ of \eqref{eq:variational} is generated by the \emph{fundamental matrix} $U(t,s) \in \mathbb{R}^{n \times n }$ as $y(t) = U(t,s)y_0$ for all $(t,s) \in \mathbb{R}^2$, where $y_0 \in \mathbb{R}^n$ is an initial condition specified at time $s$. The eigenvalues of the matrix $U(s+T,s)$ are called \emph{Floquet multipliers} (of $\Gamma$), and we say that $\Gamma$ is \emph{nonhyperbolic} if there are at least $n_0 + 1 \geq 2$ Floquet multipliers on the unit circle that are counted with algebraic multiplicity. Let $E_0(s)$ denote the $(n_0+1)$-dimensional \emph{center subspace} (at time $s$) defined by the direct sum of all generalized eigenspaces with a Floquet multiplier on the unit circle and let $E_0 := \{(s,y_0) \in \mathbb{R} \times \mathbb{R}^n : y_0 \in E_0(s) \}$ denote the \emph{center bundle}. The main result on the existence of a periodic smooth local invariant center manifold near the cycle $\Gamma$ is summarized in \Cref{thm:main} and two illustrative examples of two-dimensional \emph{local center manifolds around $\Gamma$}, denoted by $\mathcal{W}_{\loc}^c(\Gamma)$, can be found in \Cref{fig:CM}. Explicit minimal equations (model systems) of the form \eqref{eq:ODE} admitting a $2\pi$-periodic two-dimensional center manifold around a nonhyperbolic cycle are given by \Cref{ex:analytic} (cylinder) and \Cref{ex:mobius} (M\"obius band).
		
		\begin{theorem}[Center Manifold Theorem for Cycles]
			\label{thm:main} 
			Consider \eqref{eq:ODE} with a $C^{k+1}$-smooth right-hand side $f : \mathbb{R}^n \to \mathbb{R}^n$ for some finite $k \geq 1$. Let $\gamma$ be a $T$-periodic solution of \eqref{eq:ODE} such that the associated cycle $\Gamma := \gamma(\mathbb{R})$ is nonhyperbolic with $(n_0 + 1)$-dimensional center subspace $E_0(s)$ at time $s \in \mathbb{R}$. Then there exists a locally defined $T$-periodic $C^k$-smooth $(n_0 + 1)$-dimensional invariant manifold $\mathcal{W}^c_{\loc}(\Gamma)$ defined around $\Gamma$ and tangent to the center bundle $E_0$.
		\end{theorem}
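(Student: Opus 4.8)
The plan is to establish \Cref{thm:main} by the classical Lyapunov--Perron method, transplanting the argument of \cite{Lentjes2023a} to the present finite-dimensional setting, where every step reduces to elementary analysis. First I would pass to co-moving coordinates $x(t) = \gamma(t) + v(t)$, turning \eqref{eq:ODE} into
\[
	\dot v(t) = A(t) v(t) + g(t, v(t)), \qquad g(t,v) := f(\gamma(t) + v) - f(\gamma(t)) - A(t) v,
\]
where $g$ is $T$-periodic in $t$, of class $C^{k+1}$ in $v$, and satisfies $g(t,0) = 0$, $D_v g(t,0) = 0$. Fixing a base point $s_0$, Floquet theory for \eqref{eq:variational} yields the splitting $\mathbb{R}^n = E_0(s) \oplus E_-(s) \oplus E_+(s)$ into the center subspace and the sums of generalized eigenspaces of $U(s+T,s)$ with Floquet multipliers inside, respectively outside, the unit circle, together with the $T$-periodic projections $P_j(s) := U(s,s_0) P_j(s_0) U(s_0,s)$, $j \in \{0,-,+\}$, which are $C^{k+1}$ in $s$ and satisfy $U(t,s) P_j(s) = P_j(t) U(t,s)$. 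Since $U(t+T,s+T) = U(t,s)$ and $[0,T]$ is compact, there is a rate $\alpha > 0$ such that, for each sufficiently small $\eta > 0$ and some $C = C(\eta) > 0$,
\begin{gather*}
	\| U(t,s) P_0(s) \| \le C\, e^{\eta |t-s|} \quad (t,s \in \mathbb{R}), \\
	\| U(t,s) P_-(s) \| \le C\, e^{-\alpha (t-s)} \quad (t \ge s), \qquad
	\| U(t,s) P_+(s) \| \le C\, e^{\alpha (t-s)} \quad (t \le s).
\end{gather*}
I would fix $\eta$ so small that a gap condition of the form $(k+1)\eta < \alpha$ holds; this is precisely what forces the loss of one derivative in the conclusion.

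Next I would localize by replacing $g$ with $g_\delta(t,v) := \chi(\|v\|/\delta)\, g(t,v)$ for a smooth cut-off $\chi$ equal to $1$ near $0$ and vanishing outside $[-1,1]$; since $D_v g(t,0) = 0$, the function $g_\delta$ is globally $C^{k+1}$, coincides with $g$ on the tube $\{\|v\| \le \delta\}$, and has Lipschitz constant, sup-norm, and higher-derivative bounds on $\mathbb{R} \times \mathbb{R}^n$ that tend to $0$ as $\delta \to 0$. For $s \in \mathbb{R}$ and $\xi \in E_0(s)$, introduce the Banach space $\mathcal{B}^s_\eta$ of continuous $v : \mathbb{R} \to \mathbb{R}^n$ with $\|v\|_{\eta,s} := \sup_t e^{-\eta|t-s|}\|v(t)\| < \infty$ and the Lyapunov--Perron operator
\begin{multline*}
	(\mathcal{T}_{s,\xi} v)(t) := U(t,s)\xi + \int_s^t U(t,\tau) P_0(\tau)\, g_\delta(\tau, v(\tau))\, d\tau \\ {} + \int_{-\infty}^t U(t,\tau) P_-(\tau)\, g_\delta(\tau, v(\tau))\, d\tau \\ {} - \int_t^{+\infty} U(t,\tau) P_+(\tau)\, g_\delta(\tau, v(\tau))\, d\tau.
\end{multline*}
The dichotomy estimates make the three integrals converge and show $\mathcal{T}_{s,\xi}$ maps $\mathcal{B}^s_\eta$ into itself; for $\delta$ small it is a contraction there, uniformly in $s$ (by $T$-periodicity and compactness of $[0,T]$) and in $\xi$. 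Its unique fixed point $v^\ast(\,\cdot\,;s,\xi) \in \mathcal{B}^s_\eta$ is a genuine solution of the $g_\delta$-modified equation, satisfies $P_0(s)\, v^\ast(s;s,\xi) = \xi$ (the center integral vanishes at $t=s$ and $P_0(s)$ kills the other two), and obeys $v^\ast(\,\cdot\,;s,0) \equiv 0$ since $g_\delta(\cdot,0) = 0$. Conversely, every solution of the modified equation lying in some $\mathcal{B}^{s}_\eta$ is such a fixed point.

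I would then take $\mathcal{W}^c$ to be the image of the graph map $\Psi(s,\xi) := \gamma(s) + v^\ast(s;s,\xi) = \gamma(s) + \xi + h(s,\xi)$, $\xi \in E_0(s)$, where $h(s,\xi) := \bigl(P_-(s) + P_+(s)\bigr)\, v^\ast(s;s,\xi) \in E_-(s) \oplus E_+(s)$, so $h(s,0) = 0$ and $D_\xi h(s,0) = 0$; let $\mathcal{W}^c_{\loc}(\Gamma)$ be its intersection with a thin tube around $\Gamma$. Then $\Gamma \subset \mathcal{W}^c$ (take $\xi = 0$); $\mathcal{W}^c$ is $T$-periodic, because uniqueness of the fixed point together with $T$-periodicity of $\gamma$, $E_0(\cdot)$, $U$, and $P_j(\cdot)$ forces $v^\ast(t+T;s+T,\xi) = v^\ast(t;s,\xi)$; and $\mathcal{W}^c$ is flow invariant: the modified solution through $\gamma(s) + v^\ast(s;s,\xi)$ is $\gamma(\cdot) + w$ with $w := v^\ast(\,\cdot\,;s,\xi)$, which lies in every $\mathcal{B}^{t_1}_\eta$ (the weighted norms are mutually equivalent), hence equals $\gamma(\cdot) + v^\ast(\,\cdot\,;t_1, P_0(t_1)w(t_1))$, so $\gamma(t_1) + w(t_1) \in \mathcal{W}^c$ for all $t_1$. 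Since $g_\delta = g$ on the tube, $\mathcal{W}^c_{\loc}(\Gamma)$ is locally invariant under \eqref{eq:ODE}. Finally, because $\partial_s\Psi(s,0) = \dot\gamma(s) \in E_0(s) = \ran D_\xi\Psi(s,0)$, the $(n_0+2)$ parameters $(s,\xi)$ carry one redundancy along the flow; a short implicit-function-theorem argument removes it and exhibits $\mathcal{W}^c_{\loc}(\Gamma)$ locally as the graph of a $C^k$ section over a $T$-periodic complement $E_0'(s)$ of $\mathbb{R}\dot\gamma(s)$ in $E_0(s)$, so it is an embedded $(n_0+1)$-dimensional submanifold, tangent along $\Gamma$ to $\mathbb{R}\dot\gamma(s) \oplus E_0'(s) = E_0(s)$.

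The main obstacle is the $C^k$-smoothness of $h$, equivalently of $\xi \mapsto v^\ast(\,\cdot\,;s,\xi)$ jointly with $s$. The difficulty is the usual one: the superposition operator behind $\mathcal{T}_{s,\xi}$ is merely Lipschitz as a self-map of $\mathcal{B}^s_\eta$, so differentiating it once forces one to enlarge the weight to $2\eta$ and the $j$-th derivative lives naturally in $\mathcal{B}^s_{(j+1)\eta}$; one must check that the chosen $\eta$, via the gap condition $(k+1)\eta < \alpha$, keeps all the auxiliary operators bounded and contracting on the finite scale $\mathcal{B}^s_\eta \subset \mathcal{B}^s_{2\eta} \subset \cdots \subset \mathcal{B}^s_{(k+1)\eta}$. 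I would handle this by the fibre-contraction (scale-of-Banach-spaces) method: write out the linearized Lyapunov--Perron fixed-point equations that the formal derivatives $D^j_\xi v^\ast$ and the mixed $s$-derivatives must satisfy, verify that each is a uniform contraction on the appropriate weighted space, and invoke the abstract theorem on $C^k$-dependence of fixed points of uniform contractions on a scale of Banach spaces, exactly as in the center manifold theory for equilibria (cf. \cite{Vanderbauwhede1992,Haragus2011}); the bootstrap runs from continuity to $C^1$ and up to $C^k$, the hypothesis $f \in C^{k+1}$ is consumed at the final step, and $T$-periodicity and smoothness in $s$ come out of the same argument with $(s,\xi)$ as joint parameter. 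Altogether this produces the $T$-periodic, $C^k$-smooth, locally invariant $(n_0+1)$-dimensional manifold $\mathcal{W}^c_{\loc}(\Gamma)$ tangent to the center bundle $E_0$, as claimed.
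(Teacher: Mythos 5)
Your proposal is correct in substance and follows the same Lyapunov--Perron strategy as the paper: translation along $\gamma$, Floquet trichotomy with $T$-periodic projectors, a cut-off nonlinearity, a parameterized contraction on the weighted spaces $\BC_s^\eta$ with $(s,\xi) \in E_0$, and $C^k$-smoothness via contractions on a scale of Banach spaces. Where you genuinely differ: (i) you obtain the periodic projectors by conjugation $P_j(s) = U(s,s_0)P_j(s_0)U(s_0,s)$ rather than via the projector ODE of \Cref{lemma:Iooss} --- equivalent; (ii) you cut off with a single factor $\chi(\|v\|/\delta)$ instead of the paper's two factors along $\pi_0(t)u$ and $(\pi_-(t)+\pi_+(t))u$, which is fine (and simpler) in finite dimensions since $D_2R(t,0)=0$ still forces the Lipschitz constant of $g_\delta$ to vanish as $\delta \downarrow 0$; (iii) most notably, your $T$-periodicity argument is different and stronger: you deduce $v^\ast(t+T;s+T,\xi) = v^\ast(t;s,\xi)$ directly from uniqueness of the fixed point and $T$-periodicity of $U$, $P_j$ and $g_\delta$, giving $\mathcal{C}(s+T,\cdot) = \mathcal{C}(s,\cdot)$ on the whole fiber, whereas the paper obtains periodicity only for $\|y_0\| \leq \delta$ via local invariance, the map $N_s$ and the inverse function theorem (\Cref{lemma:invertibility}); (iv) your closing implicit-function-theorem step, exhibiting $\mathcal{W}^c_{\loc}(\Gamma)$ as an embedded $(n_0+1)$-dimensional graph over a complement of $\dot{\gamma}(s)$ in $E_0(s)$, makes explicit the phase redundancy in the parametrization $(s,\xi) \mapsto \gamma(s) + \mathcal{C}(s,\xi)$ that the paper passes over tersely. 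Two misstatements should be repaired, neither fatal: the higher-order derivative bounds of $g_\delta$ do \emph{not} tend to $0$ as $\delta \downarrow 0$ (only the sup-norm and the Lipschitz constant do; uniform boundedness of the higher derivatives is all the smoothness argument needs, cf. \Cref{lemma:smoothness3}); and the loss of one derivative from $f \in C^{k+1}$ to a $C^k$ manifold is not caused by the spectral gap condition --- which can always be met by shrinking $\eta$, since the center part grows slower than any exponential --- but by the fact that $R(t,v) = f(\gamma(t)+v) - f(\gamma(t)) - Df(\gamma(t))v$ is only jointly $C^k$ because $A(t) = Df(\gamma(t))$ is only $C^k$ in $t$; your $g$ is $C^{k+1}$ in $v$ fiberwise but only $C^k$ jointly, and joint smoothness is what the scale-of-Banach-spaces argument consumes.
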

		
		\begin{figure}[ht]
			\centering
			\includegraphics[width = 12cm]{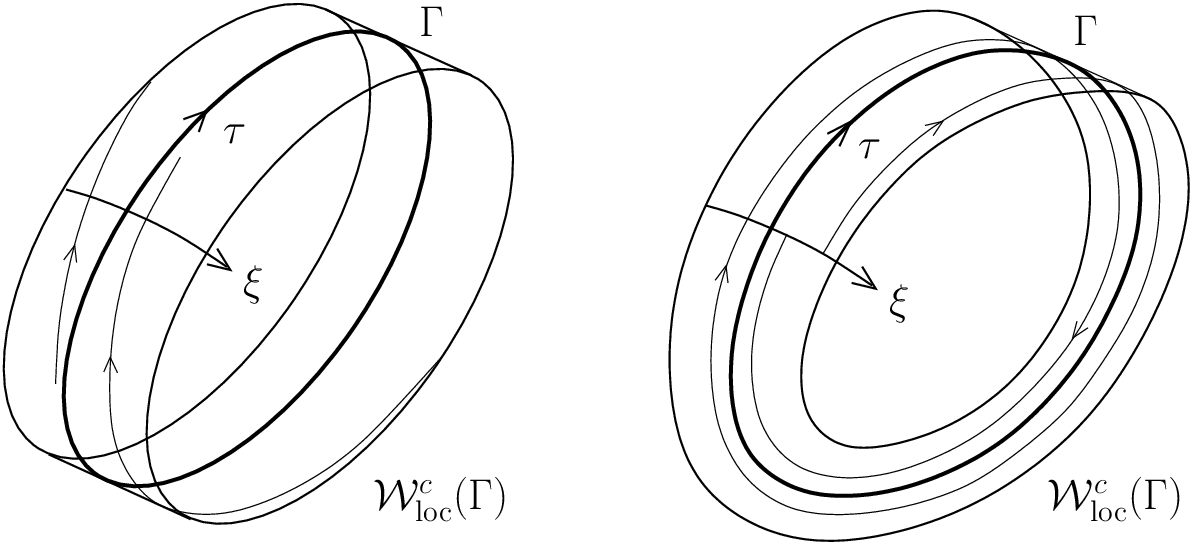}
			\caption{Illustration of two-dimensional local center manifolds around $\Gamma$ for $n=3$ \cite{Kuznetsov2023a}. The left figure represents the case when $-1$ is not a Floquet multiplier and then $\mathcal{W}_{\loc}^{c}(\Gamma)$ is locally diffeomorphic to a cylinder in a neighborhood of $\Gamma$. The right figure represents the case when $-1$ is a Floquet multiplier and then $\mathcal{W}_{\loc}^{c}(\Gamma)$ is locally diffeomorphic to a Möbius band in a neighborhood of $\Gamma$. The $(\tau,\xi)$-coordinate system on the center manifold is the special coordinate system described in \cite{Iooss1988,Iooss1999}.}
			\label{fig:CM}
		\end{figure}
		
		\subsection{Overview}
		The paper is organized as follows. In \Cref{sec:Floquet} we review some basic principles of Floquet theory for ODEs and elaborate a bit more on spectral decompositions. In \Cref{sec:existence} we use the theory from previous section to prove the existence of a Lip\-schitz continuous center manifold for (nonlinear) periodic ODEs. In \Cref{sec:properties} we prove that this center ma\-ni\-fold is periodic, sufficiently smooth, locally invariant and its tangent bundle is precisely the center bundle. The technical proofs regarding smoothness of the center manifold are relegated to Appendix A. Combining all these results proves \Cref{thm:main}. In \Cref{sec:examples} we provide explicit examples of analytic vector fields admitting (non)-unique, (non)-$C^{\infty}$-smooth and (non)-analytic periodic center manifolds.
		
		\section{Floquet theory and spectral decompositions} \label{sec:Floquet}
		Consider $\eqref{eq:ODE}$ admitting a $T$-periodic solution $\gamma$ with associated cycle $\Gamma := \gamma(\mathbb{R})$. The aim of this section is to determine the stability of $\Gamma$ and characterize nonhyperbolicity using Floquet theory. This (linear) theory will allow us to state and prove results regarding spectral properties of our operators and spaces of interest. Standard references for this entire section are the books \cite{Hale2009,Iooss1999,Kuznetsov2023a} on ODEs and \cite{Axler2014} on Linear Algebra. All unreferenced claims relating to basic properties of Floquet theory and spectral decompositions can be found here.
		
		To study the stability of $\Gamma$, set $x = \gamma + y$ and notice that $y$ satisfies the (nonlinear) periodic ODE
		\begin{equation} \label{eq:A(t) R}
			\dot{y}(t) = A(t)y(t) + R(t,y(t)),
		\end{equation}
		where $A(t) := Df(\gamma(t))$ and $R(t,y(t)) := f(\gamma(t) + y(t)) - f(\gamma(t)) - A(t)y(t)$. Hence, $A : \mathbb{R} \to \mathbb{R}^{n \times n}$ is a $T$-periodic $C^k$-smooth function and $R : \mathbb{R} \times \mathbb{R}^n \to \mathbb{R}^n$ is $T$-periodic in the first component, $C^k$-smooth, $R(\cdot,0) = 0$ and $D_2 R(\cdot,0) = 0$, i.e. $R$ consists solely of nonlinear terms. Note that the nonlinearity $R$ has one degree of smoothness less than the original vector field $f$. For a starting time $s \in \mathbb{R}$ and initial condition $y_0 \in \mathbb{R}^n$ for \eqref{eq:A(t) R}, it follows from the $C^k$-smoothness of $R$ and the Picard-Lindelöf theorem that \eqref{eq:A(t) R} admits a unique (maximal) solution for all $t \in \mathbb{R}$ sufficiently close to $s$. Hence, let for such $t$ and $s$ the map $S(t,s,\cdot) : \mathbb{R}^n \to \mathbb{R}^n$ denote the (\emph{time-dependent}) \emph{flow}, also called \emph{process} \cite{Kloeden2011}, of \eqref{eq:A(t) R}. One can verify by uniqueness of solutions that
		\begin{align} \label{eq:Sts}
			S(t,r,S(r,s,\cdot)) = S(t+r,s,\cdot), \quad  S(s,s,\cdot) = I, \quad S(t+T,s+T,\cdot) = S(t,s,\cdot),
		\end{align}
		for all $t,r \in \mathbb{R}$ sufficiently close to $s$. It is clear from this construction that studying solutions of \eqref{eq:ODE} near $\Gamma$ is equivalent to studying solutions of \eqref{eq:A(t) R} near the origin. Therefore, we start by investigating the linearization of \eqref{eq:A(t) R} around the origin:
		\begin{equation} \label{eq:A(t)y(t)}
			\dot{y}(t) = A(t)y(t).
		\end{equation}
		Observe that its (global) solutions are generated by the \emph{fundamental matrix} $U(t,s) \in \mathbb{R}^{n \times n}$ as $y(t) = U(t,s)y_0$ for all $(t,s) \in \mathbb{R}^2$, whenever an initial condition $y_0 \in \mathbb{R}^n$ at starting time $s \in \mathbb{R}$ is specified. Moreover, we have that the map $(t,s) \mapsto U(t,s)$ is $C^k$-smooth. Using uniqueness of solutions for \eqref{eq:A(t)y(t)}, $T$-periodicity of $A$, and the fact that $U(s,s) = I$ for all $s \in \mathbb{R}$, one can easily verify that
		\begin{align} \label{eq:propsU(t,s)}
			U(t,r)U(r,s) = U(t,s), \quad U(t,s)^{-1} = U(s,t), \quad U(t+T,s+T) = U(t,s), 
		\end{align}
		for all $t,r,s \in \mathbb{R}$.
		\begin{lemma} \label{lemma:partialUts}
			There holds for any $(t,s) \in \mathbb{R}^2$ that
			\begin{align*}
				\frac{\partial}{\partial t} U(t,s) = A(t)U(t,s), \quad \frac{\partial}{\partial t} U(s,t) = -U(s,t)A(t).
			\end{align*}
		\end{lemma}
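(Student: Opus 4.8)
The plan is to establish the first identity directly from the defining property of the fundamental matrix, and then to deduce the second identity by differentiating the group relation $U(s,t)U(t,s)=I$ supplied by \eqref{eq:propsU(t,s)}.

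For the first identity, I would recall that for each fixed $s$ and each $y_0\in\mathbb{R}^n$, the function $t\mapsto U(t,s)y_0$ is by construction the unique (global) solution of \eqref{eq:A(t)y(t)} taking the value $y_0$ at $t=s$, and hence satisfies $\frac{d}{dt}\bigl(U(t,s)y_0\bigr)=A(t)U(t,s)y_0$. Letting $y_0$ run through the standard basis $e_1,\dots,e_n$ of $\mathbb{R}^n$ shows that every column of $U(t,s)$ obeys this relation, so that $\frac{\partial}{\partial t}U(t,s)=A(t)U(t,s)$, which is the first claimed identity. (Equivalently, one may simply invoke that $t\mapsto U(t,s)$ solves the matrix initial value problem $\dot Y=A(t)Y$, $Y(s)=I$.)

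For the second identity, I would use $U(s,t)U(t,s)=I$ for all $t,s\in\mathbb{R}$, which is the second relation in \eqref{eq:propsU(t,s)} (or follows from the group law $U(s,t)U(t,s)=U(s,s)=I$). Since $(t,s)\mapsto U(t,s)$ is $C^k$-smooth with $k\geq 1$, one may differentiate this matrix identity in $t$ by the product rule, obtaining
\[
\Bigl(\tfrac{\partial}{\partial t}U(s,t)\Bigr)U(t,s)+U(s,t)\Bigl(\tfrac{\partial}{\partial t}U(t,s)\Bigr)=0.
\]
Substituting the first identity $\frac{\partial}{\partial t}U(t,s)=A(t)U(t,s)$ and then multiplying on the right by $U(t,s)^{-1}=U(s,t)$ (again from \eqref{eq:propsU(t,s)}) gives $\frac{\partial}{\partial t}U(s,t)+U(s,t)A(t)=0$, i.e. the second claimed identity.

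There is no real obstacle here: the lemma is essentially a restatement of the defining ODE for the fundamental matrix together with differentiation of its inverse. The only points deserving (mild) care are the passage from the vector equation to the matrix equation in the first step — handled by testing against basis vectors — and the legitimacy of term-by-term differentiation of the product $U(s,t)U(t,s)$, which follows from the $C^k$-smoothness ($k\geq 1$) of $(t,s)\mapsto U(t,s)$ noted above together with the bilinearity of matrix multiplication.
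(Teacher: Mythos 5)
Your proposal is correct and follows essentially the same route as the paper: the first identity is read off from the defining ODE \eqref{eq:A(t)y(t)} for the fundamental matrix, and the second is obtained by differentiating $U(s,t)U(t,s)=I$ from \eqref{eq:propsU(t,s)} and rearranging. You merely spell out the column-wise argument and the justification for differentiating the product, which the paper leaves implicit.
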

		\begin{proof}
			The first equality follows immediately from \eqref{eq:A(t)y(t)}. To prove the second equality, observe from \eqref{eq:propsU(t,s)} that $U(s,t)U(t,s) = I$ and so differentiating both sides with respect to $t$ yields after rearranging
			\begin{equation*}
				\bigg(\frac{\partial}{\partial t}U(s, t)\bigg)U(t, s) = -U(s, t)A(t)U(t, s),
			\end{equation*}
			which proves the claim.
		\end{proof}
		Combining the first and third equality of \eqref{eq:propsU(t,s)} together with induction, one proves that $U(s+kT,s) = U(s+T,s)^k$ for all $s \in \mathbb{R}$ and $k \in \mathbb{Z}$ and so $y(s+kT) = U(s+T,s)^k y_0$. Hence, the long term behavior of the solution $y$ is determined by the \emph{monodromy matrix} (at time $s$) $U(s+T,s)$ and especially its eigenvalues, called \emph{Floquet multipliers}. To develop a spectral theory for our problem of interest, notice that one has to \emph{complexify} the state space $\mathbb{R}^n$ and all discussed operators defined on $\mathbb{R}^n$, i.e. one has to extend the state space to $\mathbb{C}^n$ and extend all discussed operators to $\mathbb{C}^n$, see \cite[Chapter 9]{Axler2014} for more information. However, for the sake of simplicity, we will not introduce any additional notation for the complexification of the operators.
		
		Let us now study the spectrum $\sigma(U(s+T,s))$ of $U(s+T,s)$ in depth. It follows from, e.g., \cite[Theorem 1.6]{Kuznetsov2023a} that the Floquet multipliers are independent of the starting time $s$ and that $1$ is always a Floquet multiplier. To see this last claim, differentiating $\dot{\gamma}(t) = f(\gamma(t))$ yields $\ddot{\gamma}(t) = A(t)\dot{\gamma}(t)$ and so $\dot{\gamma}$ is a solution of \eqref{eq:A(t)y(t)}, i.e. $\dot{\gamma}(t) = U(t,s)\dot{\gamma}(s)$. Exploiting $T$-periodicity of $\gamma$ yields $\dot{\gamma}(s) = \dot{\gamma}(s+T) = U(s+T,s)\dot{\gamma}(s)$, which proves that $1$ is an eigenvalue of $U(s+T,s)$ with associated eigenvector $\dot{\gamma}(s)$.
		
		Let $\lambda$ be a Floquet multiplier of algebraic multiplicity $m_\lambda$, i.e. the $m_\lambda$-dimensional $U(s+T,s)$-invariant subspace $E_\lambda(s) := \ker((U(s+T,s) - \lambda I)^{m_\lambda})$ of $\mathbb{C}^n$ is maximal, or equivalently, $m_\lambda$ is the order of the root $\mu=\lambda$ of the characteristic polynomial $\det(U(s+T,s) - \mu I)$. This allows us to choose a basis of $m_\lambda$ linearly independent (generalized) eigenvectors $\zeta_1(s),\dots,\zeta_{m_\lambda}(s)$ of $E_\lambda(s)$. Moreover, let $\pi_\lambda(s)$ be the projection from $\mathbb{C}^n$ to $E_\lambda(s)$ along the direct sum $\oplus_{\mu \neq \lambda} E_{\mu}(s)$. Our next aim is to extend $\zeta_1(s),\dots,\zeta_{m_\lambda}(s)$ and $\pi_\lambda(s)$ forward and backward in time. The results can be found in the following two lemmas.
		
		\begin{lemma} \label{lemma:basis}
			Let $\lambda$ be a Floquet multiplier, then the restriction $U_\lambda(t,s) : E_\lambda(s) \to E_\lambda(t)$ is well-defined and invertible for all $(t,s) \in \mathbb{R}^2$. Moreover, there exist $C^k$-smooth maps $\zeta_i : \mathbb{R} \to \mathbb{C}^n$ such that $\zeta_1(t),\dots,\zeta_{m_\lambda}(t)$ is a basis of $E_\lambda(t)$ for all $t \in \mathbb{R}$.
		\end{lemma}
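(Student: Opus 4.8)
The plan is to first establish the mapping and invertibility properties of the restricted fundamental matrix, and then use those to transport a fixed basis of $E_\lambda(s_0)$ along the flow to obtain the smooth family $\zeta_i$. For the first part, I would argue that $E_\lambda(t)$ is precisely the generalized eigenspace $\ker((U(t+T,t)-\lambda I)^{m_\lambda})$ and that $U(t,s)$ carries $E_\lambda(s)$ into $E_\lambda(t)$. The key computation is the intertwining identity: using \eqref{eq:propsU(t,s)} and $T$-periodicity of $A$ one has
\begin{equation*}
U(t+T,t)\,U(t,s) = U(t+T,s) = U(t+T,s+T)\,U(s+T,s) = U(t,s)\,U(s+T,s),
\end{equation*}
so $U(t,s)$ conjugates $U(s+T,s)$ to $U(t+T,t)$. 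Consequently $U(t,s)(U(s+T,s)-\lambda I)^{m_\lambda} = (U(t+T,t)-\lambda I)^{m_\lambda}U(t,s)$, which immediately gives $U(t,s)E_\lambda(s)\subseteq E_\lambda(t)$; applying the same with $(t,s)$ replaced by $(s,t)$ gives the reverse inclusion of images, and since $U(s,t)=U(t,s)^{-1}$ on $\mathbb{C}^n$ we conclude that $U_\lambda(t,s):=U(t,s)|_{E_\lambda(s)}$ is a well-defined linear isomorphism onto $E_\lambda(t)$ with inverse $U_\lambda(s,t)$. In particular $\dim E_\lambda(t)=m_\lambda$ is independent of $t$, recovering the known starting-time independence of the Floquet multipliers and their multiplicities.

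For the second part, fix a starting time, say $s=0$, and pick any basis $\zeta_1(0),\dots,\zeta_{m_\lambda}(0)$ of $E_\lambda(0)$. Define $\zeta_i(t):=U(t,0)\zeta_i(0)$. By the first part each $\zeta_i(t)\in E_\lambda(t)$, and since $U_\lambda(t,0)$ is an isomorphism the vectors $\zeta_1(t),\dots,\zeta_{m_\lambda}(t)$ remain linearly independent for every $t$, hence form a basis of $E_\lambda(t)$. Smoothness is inherited directly from the smoothness of the fundamental matrix: the excerpt already records that $(t,s)\mapsto U(t,s)$ is $C^k$, so $t\mapsto U(t,0)\zeta_i(0)$ is $C^k$ as a composition of a $C^k$ map with a fixed vector. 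This yields the claimed $C^k$-smooth maps $\zeta_i:\mathbb{R}\to\mathbb{C}^n$. (One does not need, and should not expect, these basis vectors to be $T$-periodic — only the subspace $E_\lambda(t)$ is $T$-periodic, a point that becomes relevant later for the projectors.)

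The only genuine subtlety — and the step I would treat most carefully — is justifying that $E_\lambda(t)=\ker((U(t+T,t)-\lambda I)^{m_\lambda})$ is really the right object, i.e.\ that the algebraic multiplicity $m_\lambda$ does not depend on $t$ and that the generalized eigenspace stabilizes at exponent $m_\lambda$ uniformly in $t$. This follows cleanly from the conjugacy $U(t+T,t)=U(t,s)\,U(s+T,s)\,U(t,s)^{-1}$ derived above: conjugate matrices have identical characteristic polynomials and identical Jordan structure, so $m_\lambda$, the index of the eigenvalue, and the dimension of the generalized eigenspace all transfer, and $U(t,s)$ maps the generalized eigenspace of $U(s+T,s)$ isomorphically onto that of $U(t+T,t)$. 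Everything else is a routine consequence of linearity and of the smoothness of $U$ already established in the preceding text.
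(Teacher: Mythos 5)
Your proposal is correct and follows essentially the same route as the paper: establish the intertwining relation $U(t+T,t)U(t,s)=U(t,s)U(s+T,s)$, conclude that $U(t,s)$ maps $E_\lambda(s)$ into $E_\lambda(t)$ and restricts to an isomorphism, and then define $\zeta_i(t):=U(t,s)\zeta_i(s)$ for a fixed basis, with $C^k$-smoothness inherited from the fundamental matrix. Your extra care about the $t$-independence of $m_\lambda$ via the conjugacy is a slightly more explicit justification of what the paper dispatches by citing the starting-time independence of the Floquet multipliers, but it is the same argument in substance.
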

		\begin{proof}
			One can verify easily from the equalities in \eqref{eq:propsU(t,s)} that
			\begin{equation*}
				(U(t+T,t) - \lambda I)U(t,s) = U(t,s)(U(s+T,s) - \lambda I).
			\end{equation*}
			Therefore, for each $v \in E_\lambda(s)$, we get
			\begin{align*}
				(U(t+T,t) - \lambda I)^{m_{\lambda}} U(t,s)v = U(t,s)(U(s+T,s) - \lambda I)^{m_{\lambda}} v = 0,
			\end{align*}
			which proves that $U_\lambda(t,s)v \in E_\lambda(t)$ since the Floquet multipliers are independent of the starting time. As $U(t,s)$ is invertible, its restriction $U_\lambda(t,s)$ is invertible as well and this proves the first claim. To prove the second claim, let $\zeta_1(s),\dots,\zeta_{m_\lambda}(s)$ be a basis of $E_\lambda(s)$ and define for all $i \in \{1,\dots,m_\lambda\}$ the $C^k$-smooth maps $\zeta_i : \mathbb{R} \to \mathbb{C}^n$ by $\zeta_i(t) := U_\lambda(t,s)\zeta_i(s)$. By the first claim, it is clear that $\zeta_1(t),\dots,\zeta_{m_\lambda}(t)$ is a basis of $E_\lambda(t)$ for all $t\in \mathbb{R}$, and this completes the proof. 
		\end{proof}
		
		\begin{lemma}[{\cite[Proposition III.2]{Iooss1999}}] \label{lemma:Iooss}
			Let $\lambda$ be a Floquet multiplier, then there exists a $T$-periodic $C^k$-smooth map $\pi_\lambda : \mathbb{R} \to \mathbb{C}^{n \times n}$ such that  $\pi_\lambda(t)$ is the projection from $\mathbb{C}^n$ onto $E_\lambda(t)$ for all $t \in \mathbb{R}$ and satisfies the periodic linear ODE
			\begin{equation} \label{eq:ODEpi}
				\dot{\pi}_\lambda(t) = A(t)\pi_\lambda(t) - \pi_\lambda(t)A(t).
			\end{equation}
		\end{lemma}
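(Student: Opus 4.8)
The plan is to produce $\pi_\lambda$ explicitly by conjugating the constant spectral projection of the monodromy matrix with the fundamental matrix. Fix an arbitrary $s \in \mathbb{R}$ and set
\begin{equation*}
	\pi_\lambda(t) := U(t,s)\,\pi_\lambda(s)\,U(s,t), \qquad t \in \mathbb{R},
\end{equation*}
where $\pi_\lambda(s)$ denotes the spectral projection of $U(s+T,s)$ onto $E_\lambda(s)$ introduced above. I would then verify the four asserted properties in turn. Idempotency is immediate: since $U(s,t) = U(t,s)^{-1}$ by \eqref{eq:propsU(t,s)}, conjugation preserves $\pi_\lambda(s)^2 = \pi_\lambda(s)$, so $\pi_\lambda(t)^2 = \pi_\lambda(t)$. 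For the range and kernel, \Cref{lemma:basis} shows that $U(t,s)$ restricts to an isomorphism $E_\lambda(s) \to E_\lambda(t)$, and the same argument applied to every Floquet multiplier $\mu \neq \lambda$ shows that $U(t,s)$ maps $\bigoplus_{\mu \neq \lambda} E_\mu(s)$ isomorphically onto $\bigoplus_{\mu \neq \lambda} E_\mu(t)$; hence $\ran \pi_\lambda(t) = U(t,s) E_\lambda(s) = E_\lambda(t)$ and $\ker \pi_\lambda(t) = \bigoplus_{\mu \neq \lambda} E_\mu(t)$, so $\pi_\lambda(t)$ is exactly the projection onto $E_\lambda(t)$. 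Smoothness is clear because $(t,s) \mapsto U(t,s)$ is $C^k$ and $\pi_\lambda(s)$ is a constant matrix.

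The one step requiring genuine care is $T$-periodicity. Using the cocycle and translation identities in \eqref{eq:propsU(t,s)} I would rewrite $U(t+T,s) = U(t,s)U(s+T,s)$ and $U(s,t+T) = U(s+T,s)^{-1}U(s,t)$, giving
\begin{equation*}
	\pi_\lambda(t+T) = U(t,s)\,U(s+T,s)\,\pi_\lambda(s)\,U(s+T,s)^{-1}\,U(s,t).
\end{equation*}
Since $\pi_\lambda(s)$ is the spectral projection associated with $U(s+T,s)$, it commutes with $U(s+T,s)$, so $U(s+T,s)\pi_\lambda(s)U(s+T,s)^{-1} = \pi_\lambda(s)$ and the right-hand side collapses to $\pi_\lambda(t)$. (As a byproduct one also sees that the definition is independent of the choice of $s$, since the projection onto a subspace along a fixed complement is unique, though this is not needed for the statement.)

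Finally, the periodic linear ODE \eqref{eq:ODEpi} follows by differentiating the defining formula. By \Cref{lemma:partialUts} one has $\partial_t U(t,s) = A(t)U(t,s)$ and $\partial_t U(s,t) = -U(s,t)A(t)$, so the product rule yields
\begin{equation*}
	\dot{\pi}_\lambda(t) = A(t)U(t,s)\pi_\lambda(s)U(s,t) - U(t,s)\pi_\lambda(s)U(s,t)A(t) = A(t)\pi_\lambda(t) - \pi_\lambda(t)A(t).
\end{equation*}
The main obstacle in this argument is precisely the periodicity step, and it rests on the elementary but essential observation that a spectral projection commutes with the operator defining it; once that is used, everything else is routine bookkeeping with the identities already established for the fundamental matrix.
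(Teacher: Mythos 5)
Your proposal is correct, but note that the paper does not actually prove this lemma at all: it is quoted from the literature (Proposition III.2 in Iooss--Adelmeyer), so you are supplying a self-contained argument where the paper defers to a reference. Your construction $\pi_\lambda(t) = U(t,s)\pi_\lambda(s)U(s,t)$ is the natural one and fits the paper's own style: it transports the spectral object at time $s$ by the fundamental matrix, exactly as \Cref{lemma:basis} transports a basis of $E_\lambda(s)$ via $U_\lambda(t,s)$, and your range/kernel identification correctly invokes that lemma for $\lambda$ and for every $\mu \neq \lambda$ to pin down $\pi_\lambda(t)$ as the projection onto $E_\lambda(t)$ along $\bigoplus_{\mu\neq\lambda}E_\mu(t)$. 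The two delicate points are handled properly: $T$-periodicity rests on the commutation of the spectral projection with the monodromy matrix $U(s+T,s)$ (true because each generalized eigenspace is invariant under it), and the differential equation \eqref{eq:ODEpi} follows from \Cref{lemma:partialUts} by the product rule; $C^k$-smoothness is inherited from $(t,s)\mapsto U(t,s)$, and your parenthetical remark on independence of the base point $s$ is also correct since a projection is determined by its range and kernel. A standard alternative route, closer to what one finds in the cited reference and to infinite-dimensional generalizations, is to define $\pi_\lambda(t)$ by the Riesz--Dunford contour integral of the resolvent of the $t$-dependent monodromy matrix $U(t+T,t)$ around $\lambda$; that approach gets periodicity and smoothness from the periodicity and smoothness of $t \mapsto U(t+T,t)$, whereas your conjugation argument is more elementary and buys the invariance equation \eqref{eq:ODEpi} essentially for free.
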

		It will be convenient in the sequel to introduce the sets $\Lambda_{-} := \{ \lambda \in \sigma(U(s+T,s)) : |\lambda| < 1 \}, \Lambda_{0} := \{ \lambda \in \sigma(U(s+T,s)) : |\lambda| = 1 \}$ and $\Lambda_{+} := \{ \lambda \in \sigma(U(s+T,s)) : |\lambda| > 1 \}$, where the elements of these sets have to be counted with algebraic multiplicity. We say that the cycle $\Gamma$ is \emph{nonhyperbolic} if there are at least $n_0 + 1 \geq 2$ Floquet multipliers on the unit circle that are counted with algebraic multiplicity, i.e. the cardinality of $\Lambda_0$ is at least $2$.
		
		\begin{proposition} \label{prop:criteria}
			The following properties hold.
			\begin{enumerate}
				\item[$1.$] For each $s \in \mathbb{R}$, the Euclidean $n$-space admits a direct sum decomposition
				\begin{equation} \label{eq:decomposition}
					\mathbb{R}^n = E_{-}(s) \oplus E_0(s) \oplus E_{+}(s)
				\end{equation}
				in a \emph{stable subspace}, \emph{center subspace}, and \emph{unstable subspace} $($at time $s)$, respectively.
				\item[$2.$] There exist three $T$-periodic $C^k$-smooth projectors $\pi_{i} : \mathbb{R} \to \mathbb{R}^{n \times n}$ with $\ran(\pi_i(s))= E_i(s)$ for all $s \in \mathbb{R}$ and $i \in \{-,0,+\}$. 
				\item[$3.$] There exists a constant $N \geq 0$ such that $\sup_{s \in \mathbb{R}}(\|\pi_{-}(s)\| + \|\pi_{0}(s)\| + \|\pi_{+}(s)\|) = N < \infty$.
				\item[$4.$] The projections satisfy: $\pi_{i}(s)\pi_j(s) = 0$ for all $s \in \mathbb{R}$ and $i \neq j$ with $i,j \in \{-,0,+\}$.
				\item[$5.$] The projections commute with the fundamental matrix: $U(t,s)\pi_i(s) = \pi_i(t)U(t,s)$ for all $(t,s) \in \mathbb{R}^2$ and $i \in \{-,0,+\}$.
				\item[$6.$] The restrictions $U_{i}(t,s) : E_{i}(s) \to E_{i}(t)$ are well-defined and invertible for all $(t,s) \in \mathbb{R}^2$ and $i \in \{-,0,+\}$.
				\item[$7.$] The decomposition \eqref{eq:decomposition} is an exponential trichotomy on $\mathbb{R}$ meaning that there exist $a < 0 < b$ such that for every $\varepsilon > 0$ there exists a $C_\varepsilon > 0$ such that
				\begin{align*}
					\|U_-(t,s)\| &\leq C_\varepsilon e^{a(t-s)}, \quad t \geq s,\\
					\|U_0(t,s)\| &\leq C_\varepsilon e^{\varepsilon|t-s|}, \quad t,s \in \mathbb{R},\\
					\|U_+(t,s)\| &\leq C_\varepsilon e^{b(t-s)}, \quad t \leq s.
				\end{align*}
			\end{enumerate}
		\end{proposition}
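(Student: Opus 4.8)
The plan is to assemble everything from the per‑multiplier data of \Cref{lemma:basis} and \Cref{lemma:Iooss} and then group the Floquet multipliers by modulus. Over $\mathbb{C}^{n}$ the primary decomposition theorem for $U(s+T,s)$ reads $\mathbb{C}^{n}=\bigoplus_{\lambda\in\sigma(U(s+T,s))}E_{\lambda}(s)$, and \Cref{lemma:Iooss} supplies $T$‑periodic $C^{k}$ projectors $\pi_{\lambda}$ onto the $E_{\lambda}$. I would set
\[
\pi_{i}(s):=\sum_{\lambda\in\Lambda_{i}}\pi_{\lambda}(s),\qquad E_{i}(s):=\ran\pi_{i}(s),\qquad i\in\{-,0,+\},
\]
and first note these are real: since $U(s+T,s)$ has real entries, each of $\Lambda_{-},\Lambda_{0},\Lambda_{+}$ is closed under complex conjugation, and $\overline{E_{\lambda}(s)}=E_{\bar\lambda}(s)$, $\overline{\pi_{\lambda}(s)}=\pi_{\bar\lambda}(s)$; hence $\pi_{i}(s)\in\mathbb{R}^{n\times n}$ and $E_{i}(s)$ is the complexification of a real subspace of $\mathbb{R}^n$, which we again call $E_{i}(s)$. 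Restricting the primary decomposition to $\mathbb{R}^{n}$ then yields \eqref{eq:decomposition}, i.e.\ item~1, with the $\pi_{i}(s)$ the corresponding real projectors, which is item~2; item~4 is the orthogonality $\pi_{\lambda}(s)\pi_{\mu}(s)=\delta_{\lambda\mu}\pi_{\lambda}(s)$ of spectral projectors combined with pairwise disjointness of $\Lambda_{-},\Lambda_{0},\Lambda_{+}$; $T$‑periodicity and $C^{k}$‑smoothness of $\pi_{i}$ pass over from \Cref{lemma:Iooss}; and item~3 follows because a continuous $T$‑periodic matrix function on $\mathbb{R}$ attains its supremum norm over the compact interval $[0,T]$.

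For item~5 I would use uniqueness for the linear matrix ODE $\dot{Y}=A(t)Y$. On the one hand $\frac{d}{dt}\bigl(U(t,s)\pi_{i}(s)\bigr)=A(t)U(t,s)\pi_{i}(s)$ by \Cref{lemma:partialUts}. On the other hand, summing \eqref{eq:ODEpi} over $\lambda\in\Lambda_{i}$ gives $\dot{\pi}_{i}(t)=A(t)\pi_{i}(t)-\pi_{i}(t)A(t)$, whence $\frac{d}{dt}\bigl(\pi_{i}(t)U(t,s)\bigr)=\dot{\pi}_{i}(t)U(t,s)+\pi_{i}(t)A(t)U(t,s)=A(t)\pi_{i}(t)U(t,s)$. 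The two $C^{1}$ maps $t\mapsto U(t,s)\pi_{i}(s)$ and $t\mapsto\pi_{i}(t)U(t,s)$ therefore solve the same linear ODE and agree at $t=s$ (both equal $\pi_{i}(s)$), so they coincide, which is item~5. Item~6 is then immediate: for $y\in E_{i}(s)$ one has $\pi_{i}(t)U(t,s)y=U(t,s)\pi_{i}(s)y=U(t,s)y$, so $U(t,s)E_{i}(s)\subseteq E_{i}(t)$; running the same argument with $U(s,t)$ and using $U(s,t)=U(t,s)^{-1}$ from \eqref{eq:propsU(t,s)} shows $U_{i}(t,s)$ is invertible with inverse $U_{i}(s,t)$.

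The substantive part is item~7, where the one real difficulty is making the exponential bounds \emph{uniform in the starting time $s$}. My plan is, for each $i$, to reduce powers of the monodromy to a single fixed matrix: writing $r=s-\lfloor s/T\rfloor T\in[0,T)$, the periodicity and cocycle relations in \eqref{eq:propsU(t,s)} give $U_{i}(s+T,s)=U_{i}(r+T,r)=U_{i}(r,0)\,M_{i}\,U_{i}(r,0)^{-1}$ with $M_{i}:=U_{i}(T,0)$, hence $U_{i}(s+T,s)^{k}=U_{i}(r,0)\,M_{i}^{k}\,U_{i}(r,0)^{-1}$ for every $k\in\mathbb{Z}$; since $r$ ranges over the bounded set $[0,T)$ and $U_{i}$ depends continuously on its arguments, $\|U_{i}(r,0)\|$ and $\|U_{i}(0,r)\|$ are bounded by a constant independent of $s$. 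I would then invoke the elementary fact that a matrix $M$ obeys $\|M^{k}\|\le C\rho_{1}^{k}$ for $k\ge0$ whenever $\rho_{1}$ exceeds the spectral radius $\rho(M)$ (Jordan form, or Gelfand's formula): applied to $M_{0}$ and $M_{0}^{-1}$, both of spectral radius $1$, this gives $\|M_{0}^{k}\|\le C_{\varepsilon}e^{\varepsilon T|k|}$ for all $k\in\mathbb{Z}$ and all $\varepsilon>0$; applied to $M_{-}$, of spectral radius $\max_{\lambda\in\Lambda_{-}}|\lambda|<1$, and to $M_{+}^{-1}$, of spectral radius $(\min_{\lambda\in\Lambda_{+}}|\lambda|)^{-1}<1$, it gives, after fixing $a\in(T^{-1}\log\rho(M_{-}),0)$ and $b\in(0,-T^{-1}\log\rho(M_{+}^{-1}))$, bounds $\|M_{-}^{k}\|\le Ce^{aTk}$ and $\|M_{+}^{-k}\|\le Ce^{-bTk}$ for $k\ge0$. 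Finally, for general $(t,s)$ I write $t-s=kT+\theta$ with $k=\lfloor(t-s)/T\rfloor$ and $\theta\in[0,T)$, so that $U_{i}(t,s)=U_{i}(s+\theta,s)\,U_{i}(s+T,s)^{k}$ by \eqref{eq:propsU(t,s)}; the first factor is bounded uniformly in $s$ and $\theta$ by continuity and $T$‑periodicity, and the second is controlled by the preceding estimates. Collecting constants and using $|k|\le|t-s|/T+1$ converts the discrete bounds into the continuous ones claimed in item~7, the $\varepsilon$‑dependence of $C_{\varepsilon}$ entering only through the center block because $\rho(M_{0})=1$ is the borderline case. I expect this reduction — the passage from the spectrum of the monodromy to uniform‑in‑$s$ continuous‑time estimates — to be the only genuinely delicate point, the remaining items being bookkeeping on top of \Cref{lemma:basis} and \Cref{lemma:Iooss}. (Alternatively one may invoke the Floquet normal form $U(t,s)=Q(t)e^{(t-s)B}Q(s)^{-1}$ over $\mathbb{C}$ with $Q$ being $T$‑periodic and $C^{k}$, split $B$ along \eqref{eq:decomposition}, and estimate $\|e^{(t-s)B_{i}}\|$ block by block using $\sigma(B_{-})\subset\{\Re z<0\}$, $\sigma(B_{0})\subset\{\Re z=0\}$ and $\sigma(B_{+})\subset\{\Re z>0\}$.)
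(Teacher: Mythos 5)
Your proposal is correct, and for items 1--6 it follows the paper's proof almost verbatim: the same grouping $E_i(s)=\oplus_{\lambda\in\Lambda_i}E_\lambda(s)$, $\pi_i=\sum_{\lambda\in\Lambda_i}\pi_\lambda$ with realness via closure of each $\Lambda_i$ under conjugation, periodicity plus compactness of $[0,T]$ for item 3, and the same ODE-uniqueness argument (differentiate $t\mapsto U(t,s)\pi_i(s)$ and $t\mapsto\pi_i(t)U(t,s)$ using \eqref{eq:ODEpi}) for item 5; the only cosmetic deviations are that you get item 4 from orthogonality of the spectral projectors rather than from uniqueness of the direct-sum decomposition, and item 6 from item 5 rather than from \Cref{lemma:basis} (both are fine). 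The genuine difference is in item 7. The paper applies the spectral radius formula directly to the $s$-dependent monodromy $U_-(s+T,s)$, bounds $\sup_{s\le t\le s+T}\|U_-(t,s)\|$ by continuity, and splits $t-s$ into blocks of length $mT$ via the integers $m_t,m_t^\star$; the uniformity of the resulting constants in $s$ is implicit (it rests on $T$-periodicity, which the paper does not spell out). You instead conjugate $U_i(s+T,s)=U_i(r,0)\,U_i(T,0)\,U_i(r,0)^{-1}$ with $r=s\bmod T$, reduce everything to powers of the single fixed matrix $M_i=U_i(T,0)$ estimated by Gelfand/Jordan, and absorb the $r$- and $\theta$-dependent factors by compactness of $[0,T]$. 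This buys an explicit, clean proof that $C_\varepsilon$, $a$, $b$ can be chosen independently of $s$ (the one delicate point, as you correctly identify), at the cost of an extra conjugation step; the paper's version is slightly shorter and avoids the reduction to a fixed monodromy, but leaves that uniformity to the reader. Both routes are standard Floquet-type arguments and both are valid; your alternative via the Floquet normal form $U(t,s)=Q(t)e^{(t-s)B}Q(s)^{-1}$ would also work, though over $\mathbb{R}$ one should either pass to $2T$ or keep $B$ complex, as the paper itself does later in \Cref{sec:examples}.
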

		\begin{proof}
			We verify the seven properties step by step.
			
			1. From the generalized eigenspace decomposition theorem, we have that
			\begin{equation*}
				\mathbb{R}^n = \bigoplus_{\lambda \in \sigma(U(s + T,s))}E_\lambda(s),
			\end{equation*}
			and if we define $E_{i}(s) := \oplus_{\lambda \in \Lambda_i} E_\lambda(s)$ for $i \in \{-,0,+\}$, the result follows. Notice that $E_i(s)$ can be regarded as real vector space since, if $\lambda \in \Lambda_i$, then $\overline{\lambda} \in \Lambda_{i}$ because $U$ is a real operator. 
			
			2. Define for $i \in \{-,0,+\}$ the map $\pi_{i}$ by $\pi_i(s) := \sum_{\lambda \in \Lambda_i} \pi_\lambda(s)$. It follows from linearity and \Cref{lemma:Iooss} that $\pi_i$ is $T$-periodic and $C^k$-smooth. By construction, the range of $\pi_i(s)$ is $E_{i}(s)$ for all $s \in \mathbb{R}$. The same argument as in the first assertion shows that $\pi_i$ is a real operator.
			
			3. Because $\pi_i$ and the norm $\| \cdot \|$ are continuous, we have that the map $\| \pi_i (\cdot) \| : \mathbb{R} \to \mathbb{R}$ is $T$-periodic and continuous. The claim follows now from applying three times the extreme value theorem.
			
			4. For $y \in \mathbb{R}^n$ the direct sum \eqref{eq:decomposition} admits a unique decomposition $y = y_- + y_0 + y_+$ with $y_i \in E_i(s)$. Hence, $\pi_i(s)\pi_j(s)y = \pi_i(s)y_j = 0$ if $i \neq j$ for all $s \in \mathbb{R}$.
			
			5. Differentiating $t \mapsto U(t,s)\pi_i(s)$ and $t \mapsto \pi_i(t)U(t,s)$ while using \eqref{eq:ODEpi}, one sees that they both satisfy \eqref{eq:A(t)y(t)}. Since they coincide at time $t = s$, we have by uniqueness that they must be equal.
			
			6. Define for $i \in \{-,0,+\}$ the map $U_{i}(t,s)$ by $U_i(t,s) := \oplus_{\lambda \in \Lambda_i} U_\lambda(t,s)$ for all $(t,s) \in \mathbb{R}^2$. The claim follows now from linearity and \Cref{lemma:basis}.
			
			7. We will only prove the $U_{-}(t,s)$ estimate as the other ones can be proven similarly. Since the spectrum of $U_-(s + T,s)$ lies inside the unit disk, it follows from the spectral radius formula that 
			\begin{equation*}
				\lim_{m \rightarrow \infty}\|U_-(s + T,s)^m\|^{\frac{1}{m}} = \max_{\lambda \in \sigma(U_-(s + T,s))}|\lambda| < 1 .
			\end{equation*}
			Hence, there exists an $a < 0$ and an integer $m > 0$ such that
			\begin{equation*}
				\|U_-(s + T,s)^m\| < (1 + aT)^m,
			\end{equation*}
			and by continuity of the map $t \mapsto U_-(t,s)$, there is some $L > 0$ such that $\sup_{s \leq t \leq s + T}\|U_-(t,s)\| \leq L$. Denote $L_m := L \max_{j = 0, \dots, m - 1}\|U_-(s + T,s)^j\|$ and let $m_t$ be the largest nonnegative integer such that $s + m_tmT \leq t$ and let $0 \leq m_t^\star \leq m - 1$ be the largest integer such that $s + m_tmT + m_t^\star T \leq t$. Using \eqref{eq:propsU(t,s)}, one obtains
			\begin{align*}
				U_-(t,s) = U_-(t - m_tmT - m_t^\star T, s) U_-(s + T,s)^{m_t^\star}U_-(s + T,s)^{m_tm}.
			\end{align*}
			By the maximum property of $m_t^\star$: $s \leq t - m_tmT - m_t^\star T \leq s + m_tmT + (m_t^\star + 1)T - m_tmT - m_t^{\star} T = s + T$, and so
			\begin{align*}
				\|U_-(t,s)\| \leq L_n\|U_-(s + T,s)^m\|^{m_t} \leq L_m (1 + aT)^{m_tm} \leq L_m [(1 + aT)^{\frac{1}{aT}}]^{a(t - s)} \leq L_m e^{a(t - s)},
			\end{align*}
			where we used in the last equality the fact that the map $x \mapsto (1+\frac{1}{x})^{x}$ is monotonically increasing on $(-\infty,0)$ and converges to $e$ as $x \to -\infty$. For the other estimates, for a given $\varepsilon > 0$ and sufficiently large $m' \in \mathbb{N}$, one finds that there exists a $M_\varepsilon$ and $N_{m'}$ such that $||U_0(t,s)|| \leq M_\varepsilon e^{\varepsilon|t-s|}$ for $(t,s) \in \mathbb{R}^2$ and $||U_{+}(t,s)|| \leq N_{m'} e^{b(t-s)}$ for $t \leq s$. Choosing $C_\varepsilon := \max \{L_m, M_\varepsilon, N_{m'}\}$ proves the claim.
		\end{proof}
		
		\section{Existence of a Lipschitz center manifold} \label{sec:existence}
		The aim of this section is to prove the existence of a (local) center manifold for \eqref{eq:A(t) R} around the origin. The proof consists of four steps. In the first step, we show that we can formulate \eqref{eq:A(t) R} equivalently as an integral equation. In the second step, we determine a pseudo-inverse for solutions of this integral equation on a suitable Banach space. In the third step, we modify our nonlinearity $R$ outside a ball of radius $\delta > 0$ such that it becomes Lipschitz con\-ti\-nu\-ous, and eventually a contraction when $\delta$ is chosen small enough. In the last step, we construct a (family of) fixed point operators using the pseudo-inverse and modified nonlinearity for a sufficiently small $\delta$. The fixed points of these contractions constitute the center manifold.

		\begin{lemma} \label{lemma:integralequivalence}
			The ordinary differential equation \eqref{eq:A(t) R} is equivalent to the integral equation
			\begin{equation} \label{eq:integraleq}
				u(t) = U(t,s)u(s) + \int_s^t U(t,\tau)R(\tau,u(\tau)) d\tau.
			\end{equation}
		\end{lemma}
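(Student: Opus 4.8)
The plan is to establish the equivalence in both directions, using the fundamental matrix $U(t,s)$ as an integrating factor, and relying on the identities in \eqref{eq:propsU(t,s)} together with \Cref{lemma:partialUts}. Throughout, $u$ is understood to be a $C^1$-curve (or at least absolutely continuous) defined on some interval containing $s$, and all statements about $t$ are local, i.e. for $t$ sufficiently close to $s$, so that solutions of \eqref{eq:A(t) R} exist and are unique by Picard--Lindel\"of.

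First I would show that a solution of \eqref{eq:A(t) R} satisfies \eqref{eq:integraleq}. Suppose $u$ solves $\dot u(t) = A(t)u(t) + R(t,u(t))$ with $u(s)$ given. Consider the auxiliary function $v(t) := U(s,t)u(t)$. Differentiating with the product rule and using the second identity of \Cref{lemma:partialUts}, namely $\frac{\partial}{\partial t}U(s,t) = -U(s,t)A(t)$, one gets
\begin{equation*}
	\dot v(t) = -U(s,t)A(t)u(t) + U(s,t)\dot u(t) = -U(s,t)A(t)u(t) + U(s,t)\bigl(A(t)u(t) + R(t,u(t))\bigr) = U(s,t)R(t,u(t)).
\end{equation*}
Integrating from $s$ to $t$ and using $v(s) = U(s,s)u(s) = u(s)$ gives $U(s,t)u(t) = u(s) + \int_s^t U(s,\tau)R(\tau,u(\tau))\,d\tau$. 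Applying $U(t,s)$ on the left and using $U(t,s)U(s,t) = I$ together with $U(t,s)U(s,\tau) = U(t,\tau)$ from \eqref{eq:propsU(t,s)} yields exactly \eqref{eq:integraleq}.

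Conversely, I would show that a continuous solution $u$ of the integral equation \eqref{eq:integraleq} is automatically $C^1$ and solves \eqref{eq:A(t) R}. Since $u$ is continuous and $R$ is $C^k$ (hence continuous), the integrand $\tau \mapsto U(t,\tau)R(\tau,u(\tau))$ is continuous, so the right-hand side of \eqref{eq:integraleq} is differentiable in $t$; thus $u$ is $C^1$. To differentiate, rewrite the integral term using $U(t,\tau) = U(t,s)U(s,\tau)$ to pull the $t$-dependence out: $\int_s^t U(t,\tau)R(\tau,u(\tau))\,d\tau = U(t,s)\int_s^t U(s,\tau)R(\tau,u(\tau))\,d\tau$. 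Differentiating \eqref{eq:integraleq} with respect to $t$, using $\frac{\partial}{\partial t}U(t,s) = A(t)U(t,s)$ from \Cref{lemma:partialUts} on both occurrences of $U(t,s)$, and the fundamental theorem of calculus on the integral (which contributes $U(t,s)U(s,t)R(t,u(t)) = R(t,u(t))$), one obtains
\begin{equation*}
	\dot u(t) = A(t)U(t,s)u(s) + A(t)U(t,s)\int_s^t U(s,\tau)R(\tau,u(\tau))\,d\tau + R(t,u(t)) = A(t)u(t) + R(t,u(t)),
\end{equation*}
where the last equality uses \eqref{eq:integraleq} again to recognize the bracketed expression as $u(t)$. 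Evaluating \eqref{eq:integraleq} at $t = s$ shows the initial conditions match, so by uniqueness the two formulations describe the same object.

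I do not anticipate a serious obstacle here; the argument is a routine variation-of-constants computation. The only points requiring mild care are: (i) keeping track of the order of arguments in $U(t,s)$ versus $U(s,t)$ so that the identities from \eqref{eq:propsU(t,s)} are applied correctly; (ii) the bootstrap observation in the converse direction that a merely continuous solution of \eqref{eq:integraleq} gains a derivative for free, which is needed to even speak of it solving the ODE; and (iii) implicitly restricting to $t$ near $s$ so that everything is well-defined, consistent with the local nature of the flow $S(t,s,\cdot)$ introduced earlier. None of these is a real difficulty, but they are the places where a sloppy write-up could go wrong.
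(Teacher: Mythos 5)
Your proposal is correct and follows essentially the same route as the paper: the ODE-to-integral direction uses exactly the paper's integrating factor $w(t) = U(s,t)u(t)$ together with \Cref{lemma:partialUts}, and the converse direction is the same differentiation of \eqref{eq:integraleq}, merely carried out by factoring $U(t,\tau) = U(t,s)U(s,\tau)$ and applying the fundamental theorem of calculus instead of invoking the Leibniz integral rule directly. Your explicit bootstrap remark that a continuous solution of \eqref{eq:integraleq} is automatically $C^1$ is a point the paper passes over with ``clearly differentiable,'' so no gap there.
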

		\begin{proof}
			Any $u$ satisfying \eqref{eq:integraleq} is clearly differentiable, and it follows from the Leibniz integral rule that 
			\begin{align*}
				\dot{u}(t) &= A(t)U(t, s)u(s) + U(t,t)R(t, u(t)) + \int_s^tA(t)U(t, \tau)R(\tau, u(\tau))d\tau \\
				&= A(t)\left[U(t, s)u(s) + \int_s^tU(t, \tau)R(\tau, u(\tau))d\tau\right] +  R(t, u(t)) = A(t)u(t) + R(t, u(t)),  
			\end{align*}
			which proves that $u$ satisfies \eqref{eq:A(t) R}. Conversely, let $u$ satisfy \eqref{eq:A(t) R} and let $w(t) = U(s, t)u(t)$. Then
			\begin{equation*}
				\dot{w}(t) = \bigg(\frac{\partial}{\partial t}U(s, t)\bigg)u(t) + U(s, t)\dot{u}(t).
			\end{equation*}
			The second equality in \Cref{lemma:partialUts} together with the fact that $u$ satisfies \eqref{eq:A(t) R} shows that $\dot{w}(t) = U(s,t)R(t,u(t))$. Integrating both sides with respect to $t$ yields
			\begin{align*}
				u(t) &= U(t, s)u(s) + U(t, s)\int_s^tU(s, \tau)R(\tau, u(\tau))d\tau = U(t, s)u(s) + \int_s^tU(t, \tau)R(\tau, u(\tau))d\tau,
			\end{align*}
			where we used \eqref{eq:propsU(t,s)} in the last equality.
		\end{proof}
		
		Let $C_b(\mathbb{R}, \mathbb{R}^{n})$ denote the Banach space of $\mathbb{R}^n$-valued continuous bounded functions defined on $\mathbb{R}$ equipped with the supremum norm $\| \cdot \|_{\infty}$. If we want to study solutions of \eqref{eq:A(t)y(t)} (or equivalently \eqref{eq:integraleq} with $R = 0$) in the center subspace, it turns that such solutions can be unbounded, and so we can not work in the space $C_b(\mathbb{R}, \mathbb{R}^{n})$. Instead, we must work in a function space that allows limited (sub)exponential growth both at plus and minus infinity. Therefore, define for any $\eta,s \in \mathbb{R}$ the normed space
		\begin{equation*}
			\BC_{s}^{\eta} := \bigg \{ f \in C(\mathbb{R},\mathbb{R}^n) : \sup_{t \in \mathbb{R}} e^{-\eta|t-s|}\|f(t)\| < \infty \bigg \},
		\end{equation*}
		with the weighted supremum norm
		\begin{equation*}
			\|f\|_{\eta,s} := \sup_{t \in \mathbb{R}} e^{-\eta|t-s|}\|f(t)\|.
		\end{equation*}
		Since the linear map $\iota : (C_b(\mathbb{R}, \mathbb{R}^{n}), \|\cdot\|_{\infty}) \rightarrow (\BC_{s}^{\eta}, \|\cdot\|_{\eta,s})$ defined by $\iota(f)(t) := e^{\eta|t - s|}f(t)$ is an isometry, it is clear that $(\BC_{s}^{\eta}, \|\cdot\|_{\eta,s})$ is a Banach space. The following result proves that all solutions of \eqref{eq:A(t)y(t)} on the center subspace belong to $\BC_{s}^{\eta}$.
		
		\begin{proposition} \label{prop:X0s}
			Let $\eta \in (0,\min \{-a,b\})$ and $s \in \mathbb{R}$. Then
			\begin{align*}
				E_0(s) = \{y_0 \in \mathbb{R}^n : &\text{ there exists a solution of \eqref{eq:A(t)y(t)}} \text{ through $y_0$ belonging to $\BC_{s}^{\eta}$}\}.
			\end{align*}
		\end{proposition}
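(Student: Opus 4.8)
The plan is to establish the two inclusions separately. Since \eqref{eq:A(t)y(t)} is linear with fundamental matrix $U$, the unique solution through $y_0$ at time $s$ is $t \mapsto U(t,s)y_0$, so the right-hand side of the claimed identity is exactly $\{y_0 \in \mathbb{R}^n : U(\cdot,s)y_0 \in \BC_s^\eta\}$, and the task reduces to deciding for which $y_0$ the function $U(\cdot,s)y_0$ has finite weighted-supremum norm. For the inclusion ``$\subseteq$'', I would take $y_0 \in E_0(s)$; by property~5 of \Cref{prop:criteria} the projectors commute with $U$, so $U(t,s)y_0 = \pi_0(t)U(t,s)y_0 \in E_0(t)$ and hence $U(t,s)y_0 = U_0(t,s)y_0$. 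Picking any $\varepsilon \in (0,\eta)$ (possible since $\eta > 0$) and applying the center estimate of the exponential trichotomy (property~7) gives $e^{-\eta|t-s|}\|U(t,s)y_0\| \le C_\varepsilon e^{(\varepsilon-\eta)|t-s|}\|y_0\| \le C_\varepsilon\|y_0\|$ for all $t \in \mathbb{R}$, so $U(\cdot,s)y_0 \in \BC_s^\eta$.

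For the reverse inclusion, suppose $y := U(\cdot,s)y_0 \in \BC_s^\eta$ and decompose $y_0 = \pi_-(s)y_0 + \pi_0(s)y_0 + \pi_+(s)y_0$; I must show that the stable and unstable components vanish. For $i \in \{-,+\}$ set $y_i(t) := \pi_i(t)y(t)$, which by properties~5 and~6 equals $U_i(t,s)\pi_i(s)y_0 \in E_i(t)$, so that $\pi_i(s)y_0 = U_i(s,t)y_i(t)$ using that $U_i(t,s)$ is invertible with inverse $U_i(s,t)$. Treating $i=-$ first: for $t \le s$ the forward-decay estimate of the trichotomy reads $\|U_-(s,t)\| \le C_\varepsilon e^{a(s-t)}$ (valid as $s \ge t$), and together with the uniform bound $\|\pi_-(t)\| \le N$ (property~3) and $\|y(t)\| \le \|y\|_{\eta,s}e^{\eta(s-t)}$ this yields
\[
\|\pi_-(s)y_0\| \;\le\; \|U_-(s,t)\|\,\|\pi_-(t)\|\,\|y(t)\| \;\le\; C_\varepsilon N\,\|y\|_{\eta,s}\, e^{(a+\eta)(s-t)}.
\]
Since $\eta < -a$, the exponent $a+\eta$ is negative, so letting $t \to -\infty$ forces $\pi_-(s)y_0 = 0$. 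The case $i=+$ is symmetric, using $\|U_+(s,t)\| \le C_\varepsilon e^{b(s-t)}$ for $t \ge s$, the bound $\|y(t)\| \le \|y\|_{\eta,s}e^{\eta(t-s)}$, and $\eta < b$, and letting $t \to +\infty$. Hence $y_0 = \pi_0(s)y_0 \in E_0(s)$.

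I expect the only delicate point to be the bookkeeping in the second inclusion: the trichotomy in \Cref{prop:criteria} is stated one-sidedly (decay of $U_-$ forward in time, of $U_+$ backward), so one must pass to the inverses $U_\pm(s,\cdot)$ on the invariant subspaces, track the sign of $t-s$ carefully, and note that the resulting inequalities bound $\|\pi_\pm(s)y_0\|$ rather than $\|y_\pm(t)\|$. The hypothesis $\eta \in (0,\min\{-a,b\})$ is precisely what makes the exponents $a+\eta$ and $\eta-b$ negative (and $\eta>0$ is what allows the choice $\varepsilon<\eta$ in the first part), so both limits vanish; everything else is the commutation relation and the uniform bound on the projectors.
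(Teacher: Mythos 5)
Your proposal is correct and follows essentially the same route as the paper: the forward inclusion via the center estimate of the exponential trichotomy with $\varepsilon \le \eta$, and the reverse inclusion by expressing $\pi_\pm(s)y_0 = U_\pm(s,t)\pi_\pm(t)y(t)$ and letting $t \to \pm\infty$ so that the hypothesis $\eta < \min\{-a,b\}$ forces these components to vanish. The only cosmetic difference is that you bound $\|\pi_\pm(s)y_0\|$ directly by a quantity tending to zero, whereas the paper rearranges the same inequality to show $e^{-\eta|t-s|}\|y(t)\|$ would blow up otherwise.
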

		\begin{proof}
			Choose $y_0 \in E_0(s)$ and define $y(t) = U_0(t,s)y_0$, which is indeed a solution of \eqref{eq:A(t)y(t)} through $y_0$. Let $ \varepsilon \in (0,\eta]$ be given. The exponential trichotomy from \Cref{prop:criteria} shows that
			\begin{align*}
				e^{-\eta|t - s|}\|y(t)\| &= e^{-\eta|t - s|}\|U_0(t,s)y_0\| 
				\leq C_\varepsilon e^{(\varepsilon - \eta)|t - s|}\|y_0\| 
				\leq C_\varepsilon\|y_0\|, \quad \forall t,s \in \mathbb{R}.    
			\end{align*}
			Taking the supremum over $t \in \mathbb{R}$ yields $y \in \BC_s^\eta$. Conversely, let $y_0 \in \mathbb{R}^n$ be such that $y$, defined by $y(t) = U(t,s)y_0$, is in $\BC_s^\eta$. For $t \geq \max\{s,0\}$ and $\varepsilon \in (0,\eta]$, we get
			\begin{align*}
				\|\pi_+(s)y_0\| &= \|U_+(s,t)\pi_+(t)y(t)\| 
				\leq C_\varepsilon e^{b(s - t)}N\|y(t)\|,
			\end{align*}
			which shows that
			\begin{equation*}
				e^{-\eta |t-s|}\|y(t)\| \geq \frac{e^{(b-\eta)(t-s)}}{C_\varepsilon N} \|\pi_+(s)y_0\| \to \infty,
			\end{equation*}
			as $t \to \infty$ unless $\pi_+(s)y_0 = 0$ as $y \in \BC_s^\eta$. Similarly, one can prove that $\pi_-(s)y_0 = 0$ and so $y_0 = (\pi_-(s) + \pi_0(s) + \pi_+(s))y_0 = \pi_0(s)y_0$, i.e. $y_0 \in E_0(s)$.
		\end{proof}

		\subsection{Bounded solutions of the linear inhomogeneous equation} \label{subsec: bounded solutions}
		Let $f : \mathbb{R} \to \mathbb{R}^n$ be a continuous function and consider the linear inhomogeneous integral equation
		\begin{equation} \label{eq:inhomogeneous CMT}
			u(t) = U(t,s)u(s) + \int_s^t U(t,\tau)f(\tau) d\tau,
		\end{equation}
		for all $(t,s) \in \mathbb{R}^2$. To prove existence of a center manifold, we need a pseudo-inverse of (exponentially) bounded solutions of \eqref{eq:inhomogeneous CMT}. To do this, define (formally) for any $\eta \in (0, \min \{-a,b \})$ and $s \in \mathbb{R}$ the operator $\mathcal{K}_{s}^{\eta} : \BC_s^\eta \to \BC_s^\eta$ as
		\begin{align*}
			(\mathcal{K}_{s}^{\eta} f)(t) &:= \int_s^tU(t,\tau)\pi_0(\tau)f(\tau)d\tau 
			+ \int_{\infty}^tU(t,\tau)\pi_+(\tau)f(\tau)d\tau
			+ \int_{-\infty}^tU(t,\tau)\pi_-(\tau)f(\tau)d\tau,
		\end{align*}
		and check that this operator is well-defined. This will be proven in the following proposition and also the fact that $\mathcal{K}_{s}^{\eta}$ is precisely the pseudo-inverse we are looking for.
		
		\begin{proposition} \label{prop:ketas}
			Let $\eta \in (0, \min \{-a,b \})$ and $s \in \mathbb{R}$. The following properties hold.
			\begin{enumerate}
				\item[$1.$] $\mathcal{K}_{s}^{\eta}$ is a well-defined bounded linear operator. Moreover, the operator norm $\|\mathcal{K}_{s}^{\eta}\|$ is bounded above independent of $s$.
				\item[$2.$] $\mathcal{K}_{s}^{\eta}f$ is the unique solution of \eqref{eq:inhomogeneous CMT} in $\BC^{\eta}_s$ with vanishing $E_0(s)$-component at time $s$.
			\end{enumerate}
		\end{proposition}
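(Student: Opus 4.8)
The plan is to prove the two assertions in turn, using only the exponential trichotomy and the uniform bound $\sup_{s}\|\pi_i(s)\| \le N$ from \Cref{prop:criteria}. Throughout, fix some $\varepsilon \in (0,\eta)$ and let $C_\varepsilon$ be the associated trichotomy constant.

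\emph{Assertion 1.} For $f \in \BC_s^\eta$ one has $\|\pi_i(\tau)f(\tau)\| \le N\|f\|_{\eta,s}\,e^{\eta|\tau-s|}$, and since $\pi_i(\tau)f(\tau) \in E_i(\tau)$ one may replace $U(t,\tau)\pi_i(\tau)$ by $U_i(t,\tau)\pi_i(\tau)$ (items 5--6 of \Cref{prop:criteria}). I would then bound the integrand of the center term on $[s,t]$ by $C_\varepsilon N\|f\|_{\eta,s}\,e^{\varepsilon|t-\tau|}e^{\eta|\tau-s|}$, of the unstable term on $[t,\infty)$ by $C_\varepsilon N\|f\|_{\eta,s}\,e^{b(t-\tau)}e^{\eta|\tau-s|}$ (the estimate $\|U_+(t,\tau)\| \le C_\varepsilon e^{b(t-\tau)}$ being available because $\tau \ge t$ there), and of the stable term on $(-\infty,t]$ by $C_\varepsilon N\|f\|_{\eta,s}\,e^{a(t-\tau)}e^{\eta|\tau-s|}$ (valid since $\tau \le t$ there). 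Using $|\tau-s| \le |\tau-t|+|t-s|$ and computing the resulting elementary exponential integrals --- all convergent precisely because $\varepsilon < \eta < \min\{-a,b\}$ --- one obtains, after multiplying by $e^{-\eta|t-s|}$ and taking the supremum over $t$,
\[
\|\mathcal{K}_s^\eta f\|_{\eta,s} \;\le\; C_\varepsilon N\left(\frac{1}{\eta-\varepsilon}+\frac{1}{b-\eta}+\frac{1}{-a-\eta}\right)\|f\|_{\eta,s}.
\]
As the constant here depends only on $\varepsilon,\eta,a,b,N$ and not on $s$, this simultaneously yields absolute convergence of the three integrals (well-definedness), membership $\mathcal{K}_s^\eta f \in \BC_s^\eta$, and a uniform-in-$s$ bound on $\|\mathcal{K}_s^\eta\|$. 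Linearity is immediate, and continuity of $\mathcal{K}_s^\eta f$ follows by factoring $U(t,\tau)=U(t,t_0)U(t_0,\tau)$ for a fixed $t_0$, which exhibits each term as the $C^k$ matrix-valued map $t\mapsto U(t,t_0)$ composed with a function of $t$ that is continuous by the fundamental theorem of calculus.

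\emph{Assertion 2.} Using the same factorization, $u:=\mathcal{K}_s^\eta f$ is differentiable; applying $\partial_t U(t,\tau)=A(t)U(t,\tau)$ from \Cref{lemma:partialUts} together with $U(t,t)=I$, each of the three terms contributes $\pi_i(t)f(t)+A(t)\int U(t,\tau)\pi_i(\tau)f(\tau)\,d\tau$, and summing while using $\pi_-+\pi_0+\pi_+=I$ gives
\[
\dot u(t) = f(t) + A(t)u(t).
\]
By the same computation as in the proof of \Cref{lemma:integralequivalence} (with $f(t)$ in place of $R(t,u(t))$), $u$ therefore satisfies \eqref{eq:inhomogeneous CMT}. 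Evaluating $u$ at $t=s$ annihilates the center integral, and applying $\pi_0(s)$ to the two remaining integrals gives zero, since $\pi_0(s)U(s,\tau)\pi_\pm(\tau)=U(s,\tau)\pi_0(\tau)\pi_\pm(\tau)=0$ by items 4--5 of \Cref{prop:criteria}; hence $\pi_0(s)u(s)=0$. For uniqueness, if $u_1,u_2\in\BC_s^\eta$ both solve \eqref{eq:inhomogeneous CMT} with vanishing $E_0(s)$-component at time $s$, then $v:=u_1-u_2$ satisfies $v(t)=U(t,s)v(s)$ and lies in $\BC_s^\eta$, so \Cref{prop:X0s} gives $v(s)\in E_0(s)$, while $\pi_0(s)v(s)=0$; since $\pi_0(s)$ restricts to the identity on $E_0(s)$, this forces $v(s)=0$ and hence $v\equiv 0$.

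I do not anticipate a genuine obstacle: the whole content sits in the three exponential integral estimates of Assertion 1 and in the routine justification of differentiating the two improper integrals. The one point needing care is to take $\varepsilon$ strictly below $\eta$ --- rather than $\varepsilon \le \eta$ as allowed in \Cref{prop:X0s} --- since it is the factor $\tfrac{1}{\eta-\varepsilon}$ that keeps the center-term estimate finite.
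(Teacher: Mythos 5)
Your proposal is correct and follows essentially the same route as the paper: the same three-term decomposition estimated through the exponential trichotomy with the identical $s$-independent bound $C_\varepsilon N\bigl(\tfrac{1}{\eta-\varepsilon}+\tfrac{1}{b-\eta}+\tfrac{1}{-a-\eta}\bigr)$, the same orthogonality argument for $\pi_0(s)(\mathcal{K}_s^\eta f)(s)=0$, and the same uniqueness argument via \Cref{prop:X0s}; your only (cosmetic) deviations are the triangle-inequality shortcut in place of the paper's explicit evaluation \eqref{eq:expo integral} and verifying the differential equation and then invoking \Cref{lemma:integralequivalence} instead of checking the integral identity directly, while your insistence on $\varepsilon<\eta$ strictly is in fact more careful than the paper's ``$\varepsilon\in(0,\eta]$''. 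One small caution: the bound $|\tau-s|\le|\tau-t|+|t-s|$ is the right tool only for the two tail integrals; on the center interval the variable $\tau$ lies between $s$ and $t$, so one should use the equality $|\tau-s|=|t-s|-|t-\tau|$ (or evaluate the integral directly, as the paper does), which is exactly what produces the finite factor $\tfrac{1}{\eta-\varepsilon}$ you quote.
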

		\begin{proof}
			We start by proving the first assertion. Clearly $\mathcal{K}_{s}^{\eta}$ is linear. Let $\varepsilon \in (0,\eta]$ be given and notice that for a given $f \in \BC^{\eta}_s$, we can write as $\mathcal{K}_{s}^{\eta}f$ as the sum of three integrals, i.e. $ \mathcal{K}_{s}^{\eta}f = I_0(\cdot,s) + I_+ + I_-$. We estimate now the norm of each integral step by step.
			
			$I_0(\cdot,s)$: The straightforward estimate
			\begin{equation*}
				\|I_0(t,s)\| \leq C_\varepsilon N \|f\|_{\eta,s} \frac{e^{\eta |t-s|}}{\eta - \varepsilon} < \infty, \quad \forall t \in \mathbb{R},
			\end{equation*}
			implies that the norm of $I_0(\cdot,s)$ is bounded above.
			
			$I_{+}:$ Notice that
			\begin{equation*}
				\|I_+(t)\| \leq C_\varepsilon N \|f\|_{\eta,s}e^{bt} \int_t^\infty e^{-b \tau + \eta |\tau - s|} d\tau, \forall t \in \mathbb{R},
			\end{equation*}
			and to prove norm boundedness of $I_+$, we have to evaluate the integral above. A calculation shows that
			\begin{align} \label{eq:expo integral}
				\int_t^\infty e^{-b \tau + \eta |\tau - s |} d\tau =    
				\begin{dcases}
					\frac{e^{-bt}}{b- \eta} e^{\eta(t-s)}, \quad & t \geq s \\
					\frac{e^{-bt}}{b+\eta}e^{\eta (s-t)} - \frac{e^{-bs}}{b+\eta} + \frac{e^{-bs}}{b-\eta}, \quad &t \leq s.
				\end{dcases}
			\end{align}
			We want to estimate the $t\leq s$ case. Notice that for real numbers $ \alpha \geq \beta$ we have
			\begin{equation*}
				(\alpha - \beta) \bigg( \frac{1}{b+\eta} - \frac{1}{b-\eta} \bigg) = \frac{-2 \eta (\alpha - \beta)}{(b+ \eta)(b-\eta)} \leq 0,
			\end{equation*}
			since $\eta < b$ by assumption. Hence,
			\begin{equation*}
				\frac{\alpha}{b+\eta} + \frac{\beta}{b-\eta} \leq \frac{\alpha}{b-\eta} + \frac{\beta}{b+\eta}.
			\end{equation*}
			We want to replace $\alpha$ by $e^{-b t + \eta s - \eta t}$ and $\beta$ by $e^{-bs}$ and therefore we have to show that $-bt + \eta s - \eta t + bs \geq 0$ which is true because $-bt + \eta s - \eta t + bs = (s-t)(b + \eta) \geq 0$ since $s-t \geq 0$. Filling this into \eqref{eq:expo integral} yields
			\begin{equation*}
				\int_t^\infty e^{-b \tau + \eta |\tau - s |} d\tau \leq \frac{e^{-bt}}{b-\eta} e^{\eta|t-s|}, 
			\end{equation*}
			which shows that
			\begin{equation*}
				\|I_{+}(t)\| \leq C_\varepsilon N \|f\|_{\eta,s}\frac{e^{\eta |t-s|}}{b-\eta} < \infty, \quad \forall t \in \mathbb{R}.
			\end{equation*}
			and so we conclude that $I_{+}$ is well-defined.
			
			$I_{-}:$ A similar estimate as for the $I_{+}$-case shows that
			\begin{equation*}
				\|I_-(t)\| \leq C_\varepsilon N \|f\|_{\eta,s}\frac{e^{\eta|t - s|}}{-a - \eta} < \infty, \quad \forall t \in \mathbb{R},
			\end{equation*}
			and so it follows that the operator norm
			\begin{equation*}
				\|\mathcal{K}_{s}^{\eta}\| \leq C_\varepsilon N \left(\frac{1}{\eta - \varepsilon} + \frac{1}{b - \eta} + \frac{1}{-a - \eta}\right) < \infty,
			\end{equation*}
			is bounded above independent of $s$. We conclude that $\mathcal{K}_{s}^{\eta}$ is a bounded linear operator on $\BC_{s}^\eta$.
			
			Let us now prove the second assertion by first showing that $\mathcal{K}_s^\eta$ is indeed a solution of \eqref{eq:inhomogeneous CMT}. Let $f \in \BC_s^\eta$ and set $u = \mathcal{K}_s^\eta f$. Then, a straightforward computation shows that
			\begin{equation*}
				U(t,s)u(s) + \int_s^t U(t,\tau)f(\tau) d\tau  = u(t),
			\end{equation*}
			and so $u$ is indeed a solution of \eqref{eq:inhomogeneous CMT}. 
			Let us now prove that $u$ has vanishing $E_0(s)$-component at time $s$, i.e. $\pi_0(s)u(s) = 0$. The mutual orthogonality of the projectors (\Cref{prop:criteria}) implies
			\begin{align*}
				\pi_0(s)u(s) = \int_\infty^s U(s,\tau ) \pi_0(\tau)\pi_{+}(\tau)f(\tau) d\tau + \int_{-\infty}^s  U(s,\tau) \pi_0(\tau) \pi_{-}(\tau)f(\tau) d\tau = 0.
			\end{align*}
			It only remains to show that $u$ is the unique solution of \eqref{eq:inhomogeneous CMT} in $\BC_s^\eta$. Let $v \in \BC_s^\eta$ be another solution of \eqref{eq:inhomogeneous CMT} with vanishing $E_0(s)$-component at time $s$. Then the function $w := u - v$ is an element of $\BC_s^\eta$ and satisfies $w(t) = U(t,s)w(s)$ for all $(t,s) \in \mathbb{R}^2$. \Cref{prop:X0s} shows us that $w(s) \in E_0(s)$ and notice that $\pi_0(s)w(s) = 0$ since $u$ and $v$ have both vanishing $E_0(s)$-component at time $s$. From \Cref{prop:criteria} we know that $w(t) = U_0(t,s)w(s)$ is in $E_0(t)$ and
			\begin{align*}
				\pi_0(t)w(t) &= \pi_0(t)U_0(t,s)w(s) = U_0(t,s)\pi_0(s)w(s) = 0,
			\end{align*}
			so $u = v$.
		\end{proof}
		
		\subsection{Modification of the nonlinearity}
		\label{subsec: modification nonlinearity}
		To prove the existence of a center manifold, a key step will be to use the Banach fixed point theorem on some specific fixed point operator. This operator we will be of course linked to the inhomogeneous equation \eqref{eq:inhomogeneous CMT}. However, we can not expect that any nonlinear operator $R(t,\cdot) : \mathbb{R}^n \to \mathbb{R}^n$ for fixed $t \in \mathbb{R}$ will impose a Lipschitz condition on the fixed point operator that will be constructed. As we are only interested in the local behavior of solutions near the origin of \eqref{eq:A(t) R}, we can modify the nonlinearity $R(t,\cdot)$ outside a ball of radius $\delta > 0$ such that eventually the fixed point operator will become a contraction. To modify this nonlinearity, introduce a $C^{\infty}$-smooth cut-off function $\xi : [0,\infty) \to \mathbb{R}$ as
		\begin{equation*}
			\xi(s) \in
			\begin{cases}
				\{ 1 \}, \quad &0 \leq s \leq 1, \\
				[0,1], \quad &0 \leq s \leq 2,\\
				\{ 0 \}, \quad & s \geq 2,
			\end{cases}
		\end{equation*}
		and define for any $\delta > 0$ the \emph{$\delta$-modification} of $R$ as the operator $R_{\delta} : \mathbb{R} \times \mathbb{R}^n \to \mathbb{R}^n$ with action
		\begin{align*}
			R_{\delta}(t,u) := R(t,u) \xi \bigg( \frac{\|\pi_0(t)u\|}{N \delta} \bigg) \xi \bigg( \frac{\|(\pi_{-}(t) + \pi_{+}(t))u\|}{N \delta} \bigg).
		\end{align*}
		Since $R$ is of the class $C^k$, the cut-off function $\xi$ is $C^{\infty}$-smooth, the Euclidean norm $\| \cdot \|$ is $C^{\infty}$-smooth on $\mathbb{R}^n \setminus \{ 0 \}$ and the projectors $\pi_-,\pi_0,\pi_{+}$ are $C^k$-smooth (\Cref{prop:criteria}), it is clear that $R_\delta$ is $C^k$-smooth. This $\delta$-modification of $R$ will ensure that the nonlinearity becomes eventually globally Lipschitz, as will be proven in the upcoming two statements.
		
		\begin{lemma} \label{lem:local lipschitz}
			There exist a $\delta_1 > 0$ and $l:[0,\delta_1] \rightarrow [0,\infty)$, continuous at $0$, such that $l(0) = 0$ and $l(\delta) =: l_\delta$ is a Lipschitz constant for $R(t,\cdot)$ on the open ball $B(0,\delta)$ for every $t \in \mathbb{R}$ and $ \delta \in (0,\delta_1]$.
		\end{lemma}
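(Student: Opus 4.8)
The plan is to control the partial derivative $D_2R(t,\cdot)$ and to exploit $T$-periodicity (hence compactness of $\Gamma$) to make all estimates uniform in $t$. Recall that $R(t,y) = f(\gamma(t)+y) - f(\gamma(t)) - A(t)y$ with $A(t) = Df(\gamma(t))$, so that
\[
	D_2R(t,y) = Df(\gamma(t)+y) - Df(\gamma(t)),
\]
which is continuous (since $f$ is $C^{k+1}$ with $k\geq 1$) and satisfies $D_2R(t,0) = 0$ for every $t\in\mathbb{R}$.

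First I would fix $\delta_1 := 1$ and note that $\Gamma = \gamma(\mathbb{R}) = \gamma([0,T])$ is compact, so the closed tube $\mathcal{N} := \{\gamma(t)+y : t\in\mathbb{R},\ \|y\|\leq\delta_1\}$ is a compact subset of $\mathbb{R}^n$. Define $l:[0,\delta_1]\to[0,\infty)$ by
\[
	l(\delta) := \sup\big\{\,\|Df(\gamma(t)+y) - Df(\gamma(t))\| : t\in\mathbb{R},\ \|y\|\leq\delta\,\big\}.
\]
This supremum is finite because $Df$ is continuous, hence bounded, on the compact set $\mathcal{N}$; moreover $l$ is nondecreasing and $l(0) = \sup_{t}\|Df(\gamma(t)) - Df(\gamma(t))\| = 0$.

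Next comes the Lipschitz estimate. For a fixed $t\in\mathbb{R}$, a fixed $\delta\in(0,\delta_1]$, and $y_1,y_2\in B(0,\delta)$, the segment $[y_1,y_2]$ is contained in the convex ball $B(0,\delta)$, so the mean value inequality gives
\[
	\|R(t,y_1) - R(t,y_2)\| \leq \sup_{z\in[y_1,y_2]}\|D_2R(t,z)\|\,\|y_1-y_2\| \leq l(\delta)\,\|y_1-y_2\|,
\]
so $l_\delta := l(\delta)$ is a Lipschitz constant for $R(t,\cdot)$ on $B(0,\delta)$, and it is the same for every $t\in\mathbb{R}$. Finally, for continuity of $l$ at $0$: since $Df$ is uniformly continuous on the compact set $\mathcal{N}$, for every $\varepsilon>0$ there is $\rho\in(0,\delta_1]$ with $\|Df(z)-Df(z')\|\leq\varepsilon$ whenever $z,z'\in\mathcal{N}$ and $\|z-z'\|\leq\rho$; taking $z=\gamma(t)+y$ and $z'=\gamma(t)$ with $\|y\|\leq\rho$ yields $l(\delta)\leq\varepsilon$ for all $\delta\in[0,\rho]$, i.e. $l(\delta)\to 0 = l(0)$ as $\delta\downarrow 0$.

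The only genuine subtlety is obtaining a Lipschitz constant that is uniform over all $t\in\mathbb{R}$, together with the decay $l(\delta)\to 0$; both are delivered by the $T$-periodicity of $\gamma$, which collapses the relevant range of $t$ to the compact cycle $\Gamma$ and thus brings in uniform continuity of $Df$. Without this compactness the defining supremum could be infinite or fail to vanish as $\delta\to 0$, so that step is where the argument really rests; everything else is the mean value inequality and bookkeeping.
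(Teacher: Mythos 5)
Your proof is correct and takes essentially the same route as the paper: define $l(\delta)$ as the supremum of $\|D_2R(t,y)\|$ over the ball, uniformly in $t$, obtain the Lipschitz constant from the mean value inequality, and get continuity of $l$ at $0$ from the smallness of $D_2R$ near $y=0$. The only difference is that you spell out, via compactness of the tube around $\Gamma$ and uniform continuity of $Df$ (using $D_2R(t,y)=Df(\gamma(t)+y)-Df(\gamma(t))$), the uniformity in $t$ that the paper's brief appeal to continuity and periodicity leaves implicit.
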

		
		\begin{proof}
			Recall that $R$ is of the class $C^k$ and that $R(t,0) = D_2R(t,0) = 0$ for all $t \in \mathbb{R}$. By continuity, choose $\delta_1 > 0$ such that $\sup\{\|D_2R(t,y)\| : y \in B(0,\delta_1)\} \leq 1$ and define the map $l$ as
			\begin{equation*}
				l(\delta) := 
				\begin{dcases}
					0, &\delta = 0, \\
					\sup\{\|D_2R(t,y)\| : y \in B(0,\delta)\}, &\delta \in (0,\delta_1].
				\end{dcases}
			\end{equation*}
			By the mean value theorem, $l(\delta)$ is a Lipschitz constant for $R(t,\cdot)$ on $B(0,\delta)$. Moreover, $l$ is monotonically increasing and observe that for each $\varepsilon > 0$, there exists a $0 < \delta_\varepsilon \leq \delta_1$ such that $\sup\{\|D_2R(t,y)\| : y \in B(0,\delta_\varepsilon)\} \leq \varepsilon$. Then for $0 < \delta \leq \delta_\varepsilon$ we have that $0 \leq l(\delta) \leq l(\delta_\varepsilon) \leq \varepsilon$ and so the map $l$ is continuous at zero.
		\end{proof}
		
		\begin{proposition} \label{prop:global lipschitz}
			For $\delta > 0$ sufficiently small, $R_{\delta}(t,\cdot)$ is globally Lipschitz continuous for all $t \in \mathbb{R}$ with Lipschitz constant $L_\delta \rightarrow 0$ as $\delta \downarrow 0$.
		\end{proposition}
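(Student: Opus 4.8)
The plan is to differentiate $R_\delta$ in its second argument, show that the operator norm of $D_2 R_\delta(t,u)$ is bounded by a constant $L_\delta$ independent of $(t,u)$ with $L_\delta \to 0$ as $\delta \downarrow 0$, and then deduce global Lipschitz continuity of $R_\delta(t,\cdot)$ from the mean value inequality on the convex set $\mathbb{R}^n$. Recall from the discussion preceding \Cref{lem:local lipschitz} that $R_\delta$ is $C^k$ with $k \geq 1$, so $D_2 R_\delta$ exists and is continuous; the entire content of the statement is the uniform bound, and in particular the uniformity in $t$.

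To obtain it, I would fix $\delta$ so small that $4N\delta \leq \delta_1$, with $\delta_1$ and $l = l_\delta$ as in \Cref{lem:local lipschitz} and $N$ the uniform projector bound from part~3 of \Cref{prop:criteria}, and set $K := \sup_{s \geq 0}|\xi'(s)|$, which is finite because $\xi'$ is continuous and supported in $[1,2]$. Writing $c_0(t,u) := \xi(\|\pi_0(t)u\|/(N\delta))$ and $c_1(t,u) := \xi(\|(\pi_-(t)+\pi_+(t))u\|/(N\delta))$, so that $R_\delta(t,u) = c_0(t,u) c_1(t,u) R(t,u)$, the product and chain rules give, for every $(t,u)$,
\[
\|D_2 R_\delta(t,u)\| \leq |c_0 c_1|\,\|D_2 R(t,u)\| + |c_1|\,\|R(t,u)\|\,\|D_2 c_0(t,u)\| + |c_0|\,\|R(t,u)\|\,\|D_2 c_1(t,u)\|,
\]
where $\|D_2 c_0(t,u)\|, \|D_2 c_1(t,u)\| \leq K/\delta$ by the chain rule together with $\|\pi_0(t)\|, \|\pi_-(t)+\pi_+(t)\| \leq N$ (with the convention that these derivatives vanish where the corresponding projected vector is zero, since $c_0$, resp.\ $c_1$, is then locally constant). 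The decisive observation is that each term on the right is nonzero only when $\|\pi_0(t)u\| \leq 2N\delta$ \emph{and} $\|(\pi_-(t)+\pi_+(t))u\| \leq 2N\delta$ — because $\xi$ and $\xi'$ vanish on $[2,\infty)$ — and hence only when $\|u\| \leq \|\pi_0(t)u\| + \|(\pi_-(t)+\pi_+(t))u\| \leq 4N\delta \leq \delta_1$. On that region \Cref{lem:local lipschitz} gives $\|D_2 R(t,u)\| \leq l_{4N\delta}$ and, using $R(t,0) = 0$, $\|R(t,u)\| = \|R(t,u) - R(t,0)\| \leq l_{4N\delta}\|u\| \leq 4N\delta\, l_{4N\delta}$. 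Substituting and using $|c_0|,|c_1| \leq 1$ yields $\|D_2 R_\delta(t,u)\| \leq l_{4N\delta}(1 + 8NK) =: L_\delta$ for all $(t,u)$, so $R_\delta(t,\cdot)$ is globally Lipschitz with constant $L_\delta$, uniformly in $t$; since $l$ is continuous at $0$ with $l(0) = 0$, we get $L_\delta \to 0$ as $\delta \downarrow 0$.

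The step I expect to be the real (if modest) obstacle is exactly this bookkeeping that makes the factor $1/\delta$ produced by differentiating the cut-offs harmless: one must verify that on the support of $D_2 R_\delta(t,\cdot)$ the point $u$ lies in a ball of radius $O(\delta)$, so that $\|R(t,u)\| = O(\delta\, l_{4N\delta})$ absorbs that factor, and this must hold uniformly in $t$ — which is precisely why the modification was built from $\|\pi_0(t)u\|$ and $\|(\pi_-(t)+\pi_+(t))u\|$ separately (rather than from $\|u\|$) and rescaled by $N$, so that part~3 of \Cref{prop:criteria} renders the projector norms, hence the estimate $\|u\| \leq 4N\delta$, independent of $t$. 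A purely elementary variant avoiding derivatives would instead bound $\|R_\delta(t,u) - R_\delta(t,v)\|$ by adding and subtracting terms in the product $c_0 c_1 R$, using global Lipschitzness of $\xi$, Lipschitzness of $R(t,\cdot)$ on $B(0,\delta_1)$, and $\|R(t,v)\| \leq 4N\delta\, l_{4N\delta}$ on the relevant region, and then reduce the case where $u,v$ are not both in $\overline{B(0,4N\delta)}$ to the case where they are by noting that $R_\delta(t,\cdot)$ vanishes off that ball and that $s \mapsto \|u + s(v-u)\|^2$ is convex, so the segment $[u,v]$ meets the sphere of radius $4N\delta$ in a way that lets one replace the outside endpoint by a sphere point without increasing the distance. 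Either route gives the same $L_\delta$ and the same conclusion.
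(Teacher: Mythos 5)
Your proposal is correct, and your primary route differs in mechanics from the paper's, though not in substance. The paper argues directly on differences: writing $R_\delta(t,y)=\Xi_{\delta,t}(y)R(t,y)$ with $\Xi_{\delta,t}(y)=\xi_\delta(\pi_0(t)y)\xi_\delta(\pi_-(t)y+\pi_+(t)y)$, it shows $|\Xi_{\delta,t}(y)-\Xi_{\delta,t}(z)|\le \tfrac{2C}{\delta}\|y-z\|$ by adding and subtracting, observes that $\Xi_{\delta,t}$ vanishes off the ball of radius $4N\delta$, and then runs a three-case analysis on whether $\|y\|,\|z\|$ lie inside or outside that ball, arriving at $L_\delta=l_{4N\delta}(1+8CN)$ — exactly the ``elementary variant'' you sketch at the end (your remark about the segment meeting the sphere is replaced in the paper by the simpler observation that when one point is outside the ball the first term drops and only the $\Xi$-difference term survives). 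Your main argument instead bounds $\|D_2R_\delta(t,u)\|$ uniformly and invokes the mean value inequality on $\mathbb{R}^n$; this avoids the case analysis entirely, at the cost of leaning on the $C^k$-smoothness of $R_\delta$ (which the paper establishes just before the lemma, and your convention at points where $\pi_0(t)u=0$ or $(\pi_-(t)+\pi_+(t))u=0$ is the right way to handle the nonsmoothness of the norm there, since the corresponding cut-off factor is locally constant). The decisive point is identical in both proofs and you identify it correctly: on the support of the cut-off (and of its derivative) one has $\|u\|\le\|\pi_0(t)u\|+\|(\pi_-(t)+\pi_+(t))u\|\le 4N\delta\le\delta_1$ uniformly in $t$, thanks to the rescaling by the uniform projector bound $N$ from \Cref{prop:criteria}, so $\|R(t,u)\|\le 4N\delta\,l_{4N\delta}$ absorbs the factor $1/\delta$ produced by the cut-off, and $L_\delta= l_{4N\delta}(1+8N\cdot\mathrm{const})\to 0$ by \Cref{lem:local lipschitz}. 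The only cosmetic gap is the passage from the open ball $B(0,4N\delta)$ in \Cref{lem:local lipschitz} to the closed region $\|u\|\le 4N\delta$, which is harmless since $D_2R$ is continuous, so the supremum over the closure coincides with that over the open ball.
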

		
		\begin{proof}
			Define for any $\delta > 0$ and $t \in \mathbb{R}$ the maps $\xi_\delta, \Xi_{\delta,t} : \mathbb{R}^n \to \mathbb{R}$ by
			\begin{align*}
				\xi_\delta(y) := \xi\left(\frac{\|y\|}{N\delta}\right), \quad 
				\Xi_{\delta,t}(y) := \xi_\delta(\pi_0(t)y)\xi_\delta(\pi_-(t)y + \pi_+(t)y),
			\end{align*}
			and so $R_\delta(t,y) = \Xi_{\delta,t}(y)R(t,y)$. Note that $\xi_\delta, \Xi_{\delta,t} \leq 1$ and let $C \geq 0$ be a global Lipschitz constant of $\xi$. Then by composition of Lipschitz functions, $\xi_\delta$ has a global Lipschitz constant $C/{N\delta}$. For $y,z \in \mathbb{R}^n$:
			\begin{align*}
				|\Xi_{\delta,t}(y) - \Xi_{\delta,t}(z)| &= |[\xi_\delta(\pi_0(t)y)\xi_\delta(\pi_-(t)y + \pi_+(t)y) - \xi_\delta(\pi_0(t)y)\xi_\delta(\pi_-(t)z + \pi_+(t)z)] \\
				& - [\xi_\delta(\pi_0(t)z)\xi_\delta(\pi_-(t)z + \pi_+(t)z) - \xi_\delta(\pi_0(t)y)\xi_\delta(\pi_-(t)z + \pi_+(t)z)]| \\
				&\leq \xi_\delta(\pi_0(t)y)|\xi_\delta(\pi_-(t)y  + \pi_+(t)y) - \xi_\delta(\pi_-(t)z + \pi_+(t)z)| \\
				& + \xi_\delta(\pi_-(t)z + \pi_+(t)z)|\xi_\delta(\pi_0(t)y - \xi_\delta(\pi_0(t)z)| \\
				&\leq \frac{2C}{\delta}\|y - z\|.
			\end{align*}
			Now, note that $\|y\| \leq \|\pi_0(t)y\| + \|(\pi_-(t) + \pi_+(t))y\|$ for all $y \in \mathbb{R}^n$. If $\|y\| \geq 4N\delta$, then $\max\{\|\pi_0(t)y\|, \|(\pi_-(t) + \pi_+(t))y\|\} \geq 2N\delta$, so that $\Xi_{\delta,t}(y) = 0$. Let $\delta_1 > 0$ be such as in \Cref{lem:local lipschitz} and fix $\delta > 0$ such that $4N\delta \leq \delta_1$. For $y, z \in \mathbb{R}^n$:
			\begin{align*}
				\|R_\delta(t,y) - R_\delta(t,z)\| &= \|[\Xi_{\delta,t}(y)R(t,y) - \Xi_{\delta,t}(y)R(t,z)] - [\Xi_{\delta,t}(z)R(t,z) - \Xi_{\delta,t}(y)R(t,z)]\| \\
				&\leq \Xi_{\delta,t}(y)\|R(t,y) - R(t,z)\| + |\Xi_{\delta,t}(y) - \Xi_{\delta,t}(z)|\|R(t,z)\|\\
				&\leq\begin{dcases}
					l_\delta(4N\delta)\|y - z\| + 8CNl_\delta(4N\delta)\|y - z\|, & \|y\|, \|z\| < 4N\delta, \\
					0, & \|y\|, \|z\| \geq 4N\delta,\\
					8CNl_\delta(4N\delta)\|y - z\|, & \|y\| \geq 4N\delta, \|z\| < 4N\delta,\\
				\end{dcases}\\
				&\leq l_\delta(4N\delta)(1 + 8CN)\|y - z\|,
			\end{align*}
			Hence, $L_\delta := l_\delta(4N\delta)(1 + 8CN)$ is a Lipschitz constant for $R_\delta(t,\cdot)$ for all $t \in \mathbb{R}$.
		\end{proof}
		
		\begin{corollary} \label{cor:Rdelta}
			For $\delta > 0$ sufficiently small, $\|R_\delta(t, y)\| \leq 4NL_\delta\delta$ for all $(t,y) \in \mathbb{R} \times \mathbb{R}^n$.
		\end{corollary}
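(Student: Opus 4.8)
The plan is to combine the global Lipschitz estimate from \Cref{prop:global lipschitz} with the fact that $R_\delta(t,\cdot)$ vanishes identically outside a ball whose radius is controlled by $\delta$. First I would recall that $R_\delta(t,0) = \Xi_{\delta,t}(0)R(t,0) = 0$ for every $t \in \mathbb{R}$, since $R(t,0) = 0$ by the standing assumptions on $R$; thus the Lipschitz bound gives $\|R_\delta(t,y)\| = \|R_\delta(t,y) - R_\delta(t,0)\| \le L_\delta\|y\|$ for all $(t,y)$. This reduces the problem to bounding $\|y\|$ on the support of $R_\delta(t,\cdot)$.

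Next I would invoke the support computation already carried out inside the proof of \Cref{prop:global lipschitz}: whenever $\|y\| \ge 4N\delta$ one has $\max\{\|\pi_0(t)y\|, \|(\pi_-(t)+\pi_+(t))y\|\} \ge 2N\delta$, hence the corresponding factor $\xi_\delta(\cdot)$ is zero (since $\xi$ vanishes on $[2,\infty)$) and therefore $\Xi_{\delta,t}(y) = 0$, so $R_\delta(t,y) = 0$. Consequently $R_\delta(t,y)$ can only be nonzero when $\|y\| < 4N\delta$. Combining the two observations, for any $(t,y) \in \mathbb{R}\times\mathbb{R}^n$ either $R_\delta(t,y) = 0$ (if $\|y\| \ge 4N\delta$) or $\|R_\delta(t,y)\| \le L_\delta\|y\| < 4NL_\delta\delta$ (if $\|y\| < 4N\delta$); in both cases $\|R_\delta(t,y)\| \le 4NL_\delta\delta$, which is the claim. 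The "sufficiently small $\delta$" qualifier is inherited verbatim from \Cref{prop:global lipschitz}, i.e. we need $4N\delta \le \delta_1$ with $\delta_1$ as in \Cref{lem:local lipschitz}.

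This argument is essentially bookkeeping, so I do not anticipate a genuine obstacle; the only point requiring a little care is the degenerate case $N = 0$, which cannot occur since each $\pi_i(s)$ is a projector onto a nontrivial (or at worst, in the trichotomy, possibly trivial stable/unstable) subspace — in any event $\pi_0(s)$ is a nonzero projector because $\Gamma$ is nonhyperbolic, forcing $N \ge \|\pi_0(s)\| \ge 1 > 0$, so dividing by $N$ in the definition of $R_\delta$ and in the estimates above is legitimate. One should also note that the bound is uniform in $t$, which is immediate since $L_\delta$ from \Cref{prop:global lipschitz} is itself $t$-independent.
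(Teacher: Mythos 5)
Your proposal is correct and follows essentially the same argument as the paper: use $R_\delta(t,0)=0$ together with the global Lipschitz constant $L_\delta$ to get $\|R_\delta(t,y)\|\le L_\delta\|y\|$, and combine this with the fact that $R_\delta(t,y)=0$ whenever $\|y\|\ge 4N\delta$. The extra remark about $N>0$ is harmless but not needed.
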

		\begin{proof}
			Note that $R_\delta(t, 0) = 0$. This means that $\|R_\delta(t, y)\| \leq L_\delta\|y\|$. Obviously the claim holds if $\|y\| \leq 4N\delta$. On the other hand, if $\|y\| > 4N\delta$, then $R_\delta(t, y) = 0$ and so the proof is complete.
		\end{proof}
		
		Let us introduce for any $\eta \in (0,\min\{-a,b\}), s \in \mathbb{R}$ and a given $\delta$-modification of $R$, the \emph{substitution operator} $\tilde{R}_{\delta} : \BC_s^{\eta} \to \BC_s^{\eta}$ as
		\begin{equation*} 
			\tilde{R}_{\delta}(u) := R_\delta(\cdot,u(\cdot)),
		\end{equation*}
		and we show that this operator inherits the same properties as $R_\delta$.
		
		\begin{lemma} \label{lemma:substitution}
			Let $\eta \in (0, \min \{-a,b \}), s \in \mathbb{R}$ and $\delta > 0$ be sufficiently small. Then the substitution operator $\tilde{R}_\delta$ is well-defined and inherits the Lipschitz properties of $R_\delta$.
		\end{lemma}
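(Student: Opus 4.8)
The plan is to derive both assertions directly from the pointwise properties of $R_\delta$ already established in \Cref{prop:global lipschitz} and \Cref{cor:Rdelta}, observing that the weighted norm $\|\cdot\|_{\eta,s}$ interacts trivially with pointwise estimates because the weight $e^{-\eta|t-s|}$ is bounded by $1$ and does not depend on the function being estimated.

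First I would check well-definedness. Fix $u \in \BC_s^\eta$. Since $R_\delta : \mathbb{R}\times\mathbb{R}^n \to \mathbb{R}^n$ is $C^k$ with $k \ge 1$, hence continuous, and $u$ is continuous, the composition $t \mapsto R_\delta(t,u(t))$ is continuous, so $\tilde{R}_\delta(u) \in C(\mathbb{R},\mathbb{R}^n)$. Next, \Cref{cor:Rdelta} gives $\|R_\delta(t,u(t))\| \le 4NL_\delta\delta$ for all $t \in \mathbb{R}$, so $\tilde{R}_\delta(u)$ is in fact bounded; because $e^{-\eta|t-s|} \le 1$ we conclude $\|\tilde{R}_\delta(u)\|_{\eta,s} \le 4NL_\delta\delta < \infty$, whence $\tilde{R}_\delta(u) \in \BC_s^\eta$ (indeed $\tilde{R}_\delta$ maps $\BC_s^\eta$ into the smaller space $C_b(\mathbb{R},\mathbb{R}^n)$). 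For the Lipschitz property, take $u,v \in \BC_s^\eta$; by \Cref{prop:global lipschitz}, for every $t \in \mathbb{R}$ we have $\|R_\delta(t,u(t)) - R_\delta(t,v(t))\| \le L_\delta\|u(t) - v(t)\|$, so multiplying by $e^{-\eta|t-s|}$ and taking the supremum over $t$ yields
\begin{align*}
\|\tilde{R}_\delta(u) - \tilde{R}_\delta(v)\|_{\eta,s}
&= \sup_{t\in\mathbb{R}} e^{-\eta|t-s|}\|R_\delta(t,u(t)) - R_\delta(t,v(t))\| \\
&\le L_\delta \sup_{t\in\mathbb{R}} e^{-\eta|t-s|}\|u(t) - v(t)\|
= L_\delta\|u-v\|_{\eta,s}.
\end{align*}
Thus $\tilde{R}_\delta$ is globally Lipschitz on $\BC_s^\eta$ with the very same constant $L_\delta$, which by \Cref{prop:global lipschitz} tends to $0$ as $\delta\downarrow 0$; since moreover $R_\delta(\cdot,0)=0$ implies $\tilde{R}_\delta(0)=0$, we also get $\|\tilde{R}_\delta(u)\|_{\eta,s} \le L_\delta\|u\|_{\eta,s}$.

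There is essentially no genuine obstacle in this lemma: the only point deserving a word of care is that $\BC_s^\eta$ carries the weighted norm $\|\cdot\|_{\eta,s}$ rather than the plain supremum norm, but because the weight is uniformly bounded by $1$ and is independent of the argument, every pointwise bound on $R_\delta$ transfers verbatim to a bound in $\|\cdot\|_{\eta,s}$; the uniform boundedness of $R_\delta$ from \Cref{cor:Rdelta} even makes the membership $\tilde{R}_\delta(u) \in \BC_s^\eta$ automatic without invoking the weight at all. This lemma is exactly the bridge needed so that, in the next subsection, the fixed-point operator built from the pseudo-inverse $\mathcal{K}_s^\eta$ (\Cref{prop:ketas}) composed with $\tilde{R}_\delta$ is a contraction on $\BC_s^\eta$ for $\delta$ small, using $\|\mathcal{K}_s^\eta\| \cdot L_\delta < 1$.
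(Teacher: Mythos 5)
Your proof is correct and follows essentially the same route as the paper: both arguments reduce everything to the pointwise Lipschitz estimate of \Cref{prop:global lipschitz}, multiply by the weight $e^{-\eta|t-s|}$ and take the supremum. The only cosmetic difference is that you establish membership in $\BC_s^\eta$ via the uniform bound of \Cref{cor:Rdelta} (so $\tilde{R}_\delta(u)$ even lands in $C_b(\mathbb{R},\mathbb{R}^n)$), whereas the paper uses $\|R_\delta(t,u(t))\| \leq L_\delta\|u(t)\|$ directly; both are immediate and equivalent for the purpose of the lemma.
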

		\begin{proof}
			It follows from \Cref{prop:global lipschitz} that 
			\begin{equation*}
				\|\tilde{R}_{\delta}(u)(t)\| = \|R_\delta(t,u(t))\| \leq L_\delta \|u(t)\|,
			\end{equation*}
			for all $u \in \BC_s^\eta$. Hence, $\|\tilde{R}_{\delta}(u) \|_{\eta,s} \leq L_\delta \| u \|_{\eta,s} < \infty$, i.e. $\tilde{R}_{\delta}(u) \in \BC_{s}^\eta$. The Lipschitz property follows immediately from \Cref{prop:global lipschitz} since
			\begin{equation*}
				\|\tilde{R}_{\delta}(u) - \tilde{R}_{\delta}(v)\|_{\eta,s} \leq  L_\delta \|u - v\|_{\eta,s},
			\end{equation*}
			for all $u,v \in \BC_s^\eta$, and so $\|\tilde{R}_{\delta}(u)\|_{\eta,s} \leq 4NL_\delta\delta$.
		\end{proof}
		Define for any $\eta \in (0, \min \{-a,b \})$ and $s \in \mathbb{R}$ the linear operator $U_s^\eta: E_0(s) \rightarrow \BC_s^\eta$ by 
		\begin{equation*}
			(U_s^\eta y_0)(t) := U(t, s)y_0.
		\end{equation*}
		\begin{lemma} \label{lemma:Uetas}
			Let $\eta \in (0, \min \{-a,b \})$ and $s \in \mathbb{R}$. Then the operator $U_s^\eta$ is well-defined and bounded.
		\end{lemma}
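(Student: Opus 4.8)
The plan is to observe that this lemma is an almost immediate consequence of the exponential trichotomy established in \Cref{prop:criteria}, together with the commutation property of the projectors with the fundamental matrix; in fact the required estimate already appeared in the first half of the proof of \Cref{prop:X0s}. Linearity of $U_s^\eta$ is clear from the definition, so the only points to check are that $U_s^\eta y_0$ genuinely lands in $\BC_s^\eta$ for every $y_0 \in E_0(s)$, and that the resulting operator is bounded.

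First I would fix $y_0 \in E_0(s)$. Since $y_0 = \pi_0(s)y_0$, item~5 of \Cref{prop:criteria} gives $U(t,s)y_0 = U(t,s)\pi_0(s)y_0 = \pi_0(t)U(t,s)y_0$, so that $U(t,s)y_0 \in E_0(t)$ and hence $U(t,s)y_0 = U_0(t,s)y_0$ for all $t \in \mathbb{R}$. Next I would invoke the center estimate of the exponential trichotomy (item~7 of \Cref{prop:criteria}): fixing any $\varepsilon \in (0,\eta]$ there is a constant $C_\varepsilon > 0$ with $\|U_0(t,s)\| \leq C_\varepsilon e^{\varepsilon|t-s|}$ for all $t,s \in \mathbb{R}$. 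Consequently,
\begin{equation*}
	e^{-\eta|t-s|}\|U(t,s)y_0\| = e^{-\eta|t-s|}\|U_0(t,s)y_0\| \leq C_\varepsilon e^{(\varepsilon-\eta)|t-s|}\|y_0\| \leq C_\varepsilon\|y_0\|, \qquad \forall t \in \mathbb{R},
\end{equation*}
because $\varepsilon - \eta \leq 0$. Since moreover $t \mapsto U(t,s)y_0$ is continuous (indeed $C^k$-smooth, as $(t,s)\mapsto U(t,s)$ is $C^k$-smooth), this shows $U_s^\eta y_0 \in \BC_s^\eta$, so that $U_s^\eta$ is well-defined.

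Finally, taking the supremum over $t \in \mathbb{R}$ in the displayed inequality yields $\|U_s^\eta y_0\|_{\eta,s} \leq C_\varepsilon\|y_0\|$ for every $y_0 \in E_0(s)$, hence $U_s^\eta$ is a bounded linear operator with $\|U_s^\eta\| \leq C_\varepsilon$; note that, as in \Cref{prop:ketas}, the bound is in fact independent of $s$ since $C_\varepsilon$ is. There is no real obstacle here: the only subtlety worth flagging is that membership of $y_0$ in the center subspace $E_0(s)$ is precisely what forces sub-exponential (rather than genuinely exponential) growth of $t \mapsto U(t,s)y_0$, and this is exactly what keeps the weighted supremum finite — had $y_0$ carried a nonzero component in $E_-(s)$ or $E_+(s)$, the corresponding function would leave $\BC_s^\eta$ (cf. the converse direction in the proof of \Cref{prop:X0s}).
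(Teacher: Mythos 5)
Your proof is correct and follows essentially the same route as the paper: both reduce the claim to the center estimate of the exponential trichotomy in \Cref{prop:criteria}, yielding $\|U_s^\eta y_0\|_{\eta,s} \leq C_\varepsilon \|y_0\|$. Your additional justification that $U(t,s)y_0 = U_0(t,s)y_0$ via the commutation of the projectors with $U(t,s)$ merely makes explicit a step the paper leaves implicit.
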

		\begin{proof}
			Let $\varepsilon \in (0,\eta]$ be given. It follows from \Cref{prop:criteria} that
			\begin{equation*}
				\|U_s^\eta y_0\|_{\eta,s} \leq C_\varepsilon\|y_0\|\sup_{t \in \mathbb{R}}e^{(\varepsilon - \eta)|t - s|} = C_\varepsilon\|y_0\|,  
			\end{equation*}
			for all $y_0 \in E_0(s)$, and so $U_s^\eta$ is well-defined and bounded.
		\end{proof}

		\subsection{Existence of a Lipschitz center manifold}
		\label{subsec: Lipschitz CM}
		Our next goal is to define a parameterized fixed point operator such that its fixed points correspond to (exponentially) bounded solutions on $\mathbb{R}$ of the modified equation
		\begin{equation} \label{eq:variation constants Rdelta}
			u(t) = U(t,s)u(s) + \int_s^t U(t,\tau)R_\delta(\tau, u(\tau)) d\tau,
		\end{equation}
		for all $(t,s) \in \mathbb{R}^2$ and some small $\delta > 0$. For a given $\eta \in (0,\min \{-a,b \}), s\in \mathbb{R}$ and sufficiently small $\delta > 0$, we define the fixed point operator $ \mathcal{G}_s^\eta : \BC_s^\eta \times E_0(s) \to \BC_s^\eta$ as
		\begin{equation*}
			\mathcal{G}_s^\eta(u,y_0) := U_s^\eta y_0 + \mathcal{K}_s^\eta(\tilde{R}_{\delta}(u)).
		\end{equation*}
		It follows from \Cref{prop:ketas}, \Cref{lemma:substitution} and \Cref{lemma:Uetas} that $\mathcal{G}_s^\eta$ is well-defined. We first show that $\mathcal{G}_s^\eta(\cdot,y_0)$ admits a unique fixed point and is globally Lipschitz for all $y_0 \in E_0(s)$.
		
		\begin{proposition} \label{thm:contraction}
			Let $\eta \in (0, \min\{-a,b\})$ and $s \in \mathbb{R}$. If $\delta > 0$ is sufficiently small, then the following statements hold.
			\begin{enumerate}
				\item[$1.$] For every $y_0 \in E_0(s)$, the map $\mathcal{G}_s^\eta(\cdot,y_0)$ has a unique fixed point $\hat{u}_s^\eta(y_0)$.
				\item[$2.$] The map $\hat{u}_s^\eta: E_0(s) \rightarrow \BC_s^\eta$ is globally Lipschitz and $\hat{u}_s^\eta(0) = 0$.
			\end{enumerate}
		\end{proposition}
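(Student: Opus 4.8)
The plan is to apply the Banach fixed point theorem to $\mathcal{G}_s^\eta(\cdot,y_0)$ for each fixed $y_0 \in E_0(s)$, and then to read off the Lipschitz dependence on $y_0$ from the uniform contraction principle. First I would fix a constant $M \geq 0$ with $\|\mathcal{K}_s^\eta\| \leq M$ for all $s \in \mathbb{R}$, which is available from the first part of \Cref{prop:ketas}. Combining the linearity of $\mathcal{K}_s^\eta$ with the global Lipschitz estimate for the substitution operator from \Cref{lemma:substitution}, one obtains for $u,v \in \BC_s^\eta$
\begin{equation*}
\|\mathcal{G}_s^\eta(u,y_0) - \mathcal{G}_s^\eta(v,y_0)\|_{\eta,s} = \|\mathcal{K}_s^\eta(\tilde{R}_{\delta}(u) - \tilde{R}_{\delta}(v))\|_{\eta,s} \leq M L_\delta \|u - v\|_{\eta,s}.
\end{equation*}
Since $L_\delta \to 0$ as $\delta \downarrow 0$ by \Cref{prop:global lipschitz}, I would now shrink $\delta > 0$ (which can be done uniformly in $s$, as $M$ is independent of $s$) so that $\kappa := M L_\delta < 1$. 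Then $\mathcal{G}_s^\eta(\cdot,y_0)$ is a $\kappa$-contraction on the Banach space $\BC_s^\eta$, and the Banach fixed point theorem yields a unique fixed point $\hat{u}_s^\eta(y_0)$, proving the first assertion.

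For the second assertion, I would compare the fixed points at two parameter values. Writing $u = \hat{u}_s^\eta(y_0)$ and $v = \hat{u}_s^\eta(z_0)$ and using $\mathcal{G}_s^\eta(u,y_0) = u$ and $\mathcal{G}_s^\eta(v,z_0) = v$, the triangle inequality together with the contraction estimate above and the bound $\|U_s^\eta(y_0 - z_0)\|_{\eta,s} \leq C_\varepsilon \|y_0 - z_0\|$ from \Cref{lemma:Uetas} (with $\varepsilon = \eta$, say) gives
\begin{equation*}
\|u - v\|_{\eta,s} \leq \|U_s^\eta(y_0 - z_0)\|_{\eta,s} + M L_\delta \|u - v\|_{\eta,s} \leq C_\varepsilon \|y_0 - z_0\| + \kappa \|u - v\|_{\eta,s},
\end{equation*}
so that $\|\hat{u}_s^\eta(y_0) - \hat{u}_s^\eta(z_0)\|_{\eta,s} \leq \frac{C_\varepsilon}{1 - \kappa}\|y_0 - z_0\|$, i.e. $\hat{u}_s^\eta$ is globally Lipschitz (with constant independent of $s$). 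Finally, since $\tilde{R}_{\delta}(0) = R_\delta(\cdot,0) = 0$, one has $\mathcal{G}_s^\eta(0,0) = U_s^\eta 0 + \mathcal{K}_s^\eta(0) = 0$, so $0$ is a fixed point of $\mathcal{G}_s^\eta(\cdot,0)$, and uniqueness forces $\hat{u}_s^\eta(0) = 0$.

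I do not expect a genuine obstacle here: the argument is the standard Lyapunov--Perron contraction. The only point requiring a little care is to keep the smallness threshold for $\delta$ and the Lipschitz constant $C_\varepsilon/(1-\kappa)$ independent of the starting time $s$; this is precisely why \Cref{prop:ketas} records that $\|\mathcal{K}_s^\eta\|$ is bounded above independently of $s$ and why the constant $C_\varepsilon$ in the exponential trichotomy of \Cref{prop:criteria} does not depend on $s$, since this uniformity will later be needed to establish $T$-periodicity of the resulting center manifold.
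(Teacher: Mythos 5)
Your argument is correct and is essentially the paper's own proof: the same contraction estimate $\|\mathcal{G}_s^\eta(u,y_0)-\mathcal{G}_s^\eta(v,z_0)\|_{\eta,s}\leq C_\varepsilon\|y_0-z_0\|+L_\delta\|\mathcal{K}_s^\eta\|\|u-v\|_{\eta,s}$, the same choice of small $\delta$ (the paper takes $L_\delta\|\mathcal{K}_s^\eta\|\leq\frac{1}{2}$ where you take a generic $\kappa<1$), the same fixed-point comparison for the Lipschitz bound, and the same observation that $\mathcal{G}_s^\eta(0,0)=0$ forces $\hat{u}_s^\eta(0)=0$. Your explicit remark about keeping the threshold for $\delta$ and the Lipschitz constant independent of $s$ is a welcome clarification but not a departure from the paper's route.
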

		
		\begin{proof}
			Fix $\varepsilon \in (0,\eta]$. For $u,v \in \BC_s^\eta$ and $y_0, z_0 \in E_0(s)$, we have
			\begin{align*}
				\|\mathcal{G}_s^\eta(u,y_0) - \mathcal{G}_s^\eta(v,z_0)\|_{\eta,s} 
				&\leq \sup_{t \in \mathbb{R}}e^{-\eta|t - s|}\|U_0(t,s)(y_0 - z_0)\|
				+ L_\delta\|\mathcal{K}_s^\eta\|\|u - v\|_{\eta,s} \\
				&\leq C_\varepsilon\|y_0 - z_0\| + L_\delta\|\mathcal{K}_s^\eta\|\|u - v\|_{\eta,s}.
			\end{align*}
			To prove the first assertion, set $y_0 = z_0$ and choose $\delta > 0$ small enough such that $L_\delta\|\mathcal{K}_s^\eta\| \leq \frac{1}{2}$ (\Cref{prop:global lipschitz}) since then
			\begin{equation*}
				\|\mathcal{G}_s^\eta(u,y_0) - \mathcal{G}_s^\eta(v,y_0)\|_{\eta,s} \leq \frac{1}{2}\|u - v\|_{\eta,s}.
			\end{equation*}
			Since $\BC_s^\eta$ is a Banach space, the contracting mapping principle applies and so the contraction $\mathcal{G}_s^\eta(\cdot,y_0)$ has a unique fixed point, say $\hat{u}_s^\eta(y_0)$.
			
			To prove the second assertion, let $\hat{u}_s^\eta(y_0)$ and $\hat{u}_s^\eta(z_0)$ be the unique fixed points of $\mathcal{G}_s^\eta(\cdot,y_0)$ and $\mathcal{G}_s^\eta(\cdot,z_0)$ respectively. Then,
			\begin{align*}
				\|\hat{u}_s^\eta(y_0) - \hat{u}_s^\eta(z_0)\|_{\eta,s} 
				= \|\mathcal{G}_s^\eta(\hat{u}_s^\eta(y_0),y_0) - \mathcal{G}_s^\eta(\hat{u}_s^\eta(z_0),z_0)\|_{\eta,s} \leq C_\varepsilon \|y_0 - z_0\| + \frac{1}{2}\|\hat{u}_s^\eta(y_0) - \hat{u}_s^\eta(z_0)\|_{\eta,s}.
			\end{align*}
			This implies that $\|\hat{u}_s^\eta(y_0) - \hat{u}_s^\eta(z_0)\|_{\eta,s} \leq 2C_\varepsilon \|y_0 - z_0\|$, and so $\hat{u}_s^\eta$ is globally Lipschitz. Since $\hat{u}_s^\eta(0) = \mathcal{G}_s^\eta(\hat{u}_s^\eta(0),0) = 0,$ the second assertion follows.
		\end{proof}
		
		In order to construct a center manifold, define the \emph{center bundle} $E_0 := \{(s,y_0) \in \mathbb{R} \times \mathbb{R}^n : y_0 \in E_0(s)\}$ and the map $\mathcal{C}: E_0 \rightarrow \mathbb{R}^n$ by
		\begin{equation} \label{eq:mapC}
			\mathcal{C}(s,y_0) := \hat{u}_s^\eta(y_0)(s).
		\end{equation}
		
		\begin{definition} \label{def:Wc}
			A \emph{global center manifold} of \eqref{eq:variation constants Rdelta} is defined as the image $\mathcal{W}^c := \mathcal{C}(E_0)$, whose $s$-\emph{fibers} are defined as $\mathcal{W}^c(s) := \{\mathcal{C}(s, y_0) \in \mathbb{R}^n : y_0 \in E_0(s)\}.$
		\end{definition} 
		
		Recall from \Cref{thm:contraction} that for a fixed $s \in \mathbb{R}$, the map $\hat{u}_s^\eta$ is globally Lipschitz. Hence, the map $\mathcal{C}(s,\cdot) : E_0(s) \to \mathbb{R}^n$ is globally Lipschitz, where the Lipschitz constant depends on $s$, i.e. $\mathcal{C}$ is only \emph{fiberwise Lipschitz}. The following result shows that the Lipschitz constant can be chosen independently of the fiber, and so we can say that $\mathcal{W}^c$ is a Lipschitz global center manifold of \eqref{eq:variation constants Rdelta}.
		
		\begin{lemma} \label{lemma:lipschitzCMT}
			There exists a constant $L > 0$ such that $\|\mathcal{C}(s,y_0) - \mathcal{C}(s,z_0)\| \leq L \|y_0 - z_0\|$ for all $(s,y_0),(s,z_0) \in E_0$. 
		\end{lemma}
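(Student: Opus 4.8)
The plan is to revisit the proof of \Cref{thm:contraction} and observe that the Lipschitz estimate obtained there for $\hat{u}_s^\eta$ can in fact be taken independently of the fiber parameter $s$; once that is in hand, evaluating the fixed point at $t = s$ gives the claim at once. The two facts that make this work are already available: by \Cref{prop:ketas} the operator norm $\|\mathcal{K}_s^\eta\|$ is bounded above by some constant $M$ that does not depend on $s$, and the constant $C_\varepsilon$ entering the relevant estimates is the one coming from the exponential trichotomy in \Cref{prop:criteria}, which is likewise independent of $s$ (see also \Cref{lemma:Uetas}).

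Concretely, I would fix $\varepsilon \in (0,\eta]$ and choose $\delta > 0$ small enough that $L_\delta M \leq \tfrac{1}{2}$; this is possible by \Cref{prop:global lipschitz} because $M$ is independent of $s$, so this single choice of $\delta$ is admissible simultaneously for every $s \in \mathbb{R}$. Then, repeating verbatim the computation in the proof of \Cref{thm:contraction} (using $\|\mathcal{G}_s^\eta(u,y_0) - \mathcal{G}_s^\eta(v,z_0)\|_{\eta,s} \leq C_\varepsilon\|y_0 - z_0\| + L_\delta\|\mathcal{K}_s^\eta\|\,\|u-v\|_{\eta,s}$ together with the fixed point identity), one obtains
\begin{equation*}
	\|\hat{u}_s^\eta(y_0) - \hat{u}_s^\eta(z_0)\|_{\eta,s} \leq 2 C_\varepsilon \|y_0 - z_0\|, \qquad y_0, z_0 \in E_0(s),
\end{equation*}
and the right-hand side is now controlled by a constant that no longer involves $s$.

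Finally, I would evaluate at $t = s$. Since $e^{-\eta|s - s|} = 1$, every $f \in \BC_s^\eta$ satisfies $\|f(s)\| \leq \|f\|_{\eta,s}$, and therefore, using \eqref{eq:mapC},
\begin{equation*}
	\|\mathcal{C}(s,y_0) - \mathcal{C}(s,z_0)\| = \|\hat{u}_s^\eta(y_0)(s) - \hat{u}_s^\eta(z_0)(s)\| \leq \|\hat{u}_s^\eta(y_0) - \hat{u}_s^\eta(z_0)\|_{\eta,s} \leq 2 C_\varepsilon \|y_0 - z_0\|,
\end{equation*}
so the constant $L := 2C_\varepsilon$ works uniformly over all $(s,y_0),(s,z_0) \in E_0$. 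I do not expect any genuine obstacle here: the whole point is the bookkeeping observation that none of the constants feeding into \Cref{thm:contraction} depend on $s$, the only step deserving a word of care being that the smallness requirement on $\delta$ has to be imposed uniformly in $s$, which is legitimate precisely because $\sup_{s}\|\mathcal{K}_s^\eta\| < \infty$.
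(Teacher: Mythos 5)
Your proof is correct and follows essentially the same route as the paper: both arguments reduce the claim to the $s$-uniform Lipschitz estimate $\|\hat{u}_s^\eta(y_0)-\hat{u}_s^\eta(z_0)\|_{\eta,s} \leq 2C_\varepsilon\|y_0-z_0\|$ from \Cref{thm:contraction}, with uniformity in $s$ coming from \Cref{prop:ketas} and the exponential trichotomy of \Cref{prop:criteria}. The only cosmetic difference is that you evaluate the fixed point at $t=s$ directly (giving $L = 2C_\varepsilon$), whereas the paper unrolls the fixed-point equation once more through $\mathcal{G}_s^\eta$ and obtains $L = 1 + 2C_\varepsilon L_\delta\|\mathcal{K}_s^\eta\|$; your explicit remark that the smallness requirement on $\delta$ can be imposed uniformly in $s$ because $\sup_s\|\mathcal{K}_s^\eta\| < \infty$ is a point the paper leaves implicit.
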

		\begin{proof}
			Let $(s, y_0), (s, z_0) \in E_0$ be given. It follows from \Cref{lemma:substitution} and \Cref{thm:contraction} that
			\begin{align*}
				\|\mathcal{C}(s,y_0) - \mathcal{C}(s,z_0)\|
				&= \|\mathcal{G}_s^\eta(\hat{u}_s^\eta(y_0),y_0)(s) - \mathcal{G}_s^\eta(\hat{u}_s^\eta(z_0),z_0)(s)\| \\
				&\leq \|y_0 - z_0\| + \|\mathcal{K}_{s}^\eta\|\|\tilde{R}_{\delta}(\hat{u}_s^\eta(y_0)) - \tilde{R}_{\delta}(\hat{u}_s^\eta(z_0))\|_{\eta,s} \\
				&\leq \|y_0 - z_0\| + L_\delta\|\mathcal{K}_{s}^\eta\|\|\hat{u}_s^\eta(y_0) - \hat{u}_s^\eta(z_0)\|_{\eta,s} \leq (1 + 2C_\varepsilon L_\delta \|\mathcal{K}_{s}^\eta\|)\|y_0 - z_0\|.
			\end{align*}
			Hence, $L := 1 + 2C_\varepsilon L_\delta \|\mathcal{K}_{s}^\eta\|$ is a Lipschitz constant that is independent of $s$ by \Cref{prop:ketas}.
		\end{proof}
		Recall from the definition of the $\delta$-modification of $R$ that $R_\delta = R$ on $\mathbb{R} \times B(0,\delta)$. Hence, the modified integral equation \eqref{eq:variation constants Rdelta} is equivalent to the original integral equation \eqref{eq:integraleq}, and by \Cref{lemma:integralequivalence} to the ordinary differential equation \eqref{eq:A(t) R}, on $B(0,\delta)$.
		\begin{definition} \label{def:Wcloc}
			A \emph{local center manifold} of \eqref{eq:A(t) R} is defined as the image
			\begin{equation*}
				\mathcal{W}^c_{\loc} := \mathcal{C}(\{(s,y_0) \in E_0 : \mathcal{C}(s,y_0) \in B(0,\delta)\}).
			\end{equation*}
		\end{definition}
		In the definitions of the center manifolds and their associated fiber bundles (\Cref{def:Wc} and \Cref{def:Wcloc}), we used the map $\mathcal{C}$ to explicitly construct these objects. However, sometimes one likes to think of the center manifold as the graph of a function. To obtain such a representation, define the map $\mathcal{H} : E_0 \to \mathbb{R}^n$ as $\mathcal{H}(s,y_0) := (I-\pi_0(s))\mathcal{C}(s,y_0)$ and notice from \Cref{prop:ketas} that we have the decomposition $\mathcal{C}(s,y_0) = y_0 + \mathcal{H}(s,y_0)$ in the nonhyperbolic and hyperbolic part respectively. Hence, we can write for example
		\begin{align} \label{eq:graphH}
			\mathcal{W}_c(s) = \{y_0 + \mathcal{H}(s,y_0) : y_0 \in E_0(s) \} 
			\cong \{(y_0,\mathcal{H}(s,y_0)) : y_0 \in E_0(s) \}
			=\mbox{graph}(\mathcal{H}(s,\cdot)),
		\end{align}
		and since $E_0(s)$ and $E_+(s) \oplus E_{-}(s)$ have only zero in their intersection (\Cref{prop:criteria}), this identification makes sense. Notice that the map $\mathcal{H}$, identified as a graph in \eqref{eq:graphH}, is strictly speaking a map that takes values in $E_{+}(s) \oplus E_{-}(s)$. Similar graph-like representations can be obtained for $\mathcal{W}^c$ and $\mathcal{W}_{\loc}^c$.
		
		\section{Properties of the center manifold} \label{sec:properties}
		In this section, we prove that $\mathcal{W}_{\loc}^c$ is locally invariant and consists of slow dynamics. Moreover, we prove that the center manifold inherits the same finite order of smoothness as the nonlinearity $R$ and its tangent bundle is precisely the center bundle $E_0$. Lastly, we prove that the center manifold is $T$-periodic in a neighborhood of the origin. At the end of the section, we combine all the results to prove \Cref{thm:main}.
		
		Our first aim is to prove the local invariance property of $\mathcal{W}_{\loc}^c$. Therefore, let $S_\delta(t,s,\cdot) : \mathbb{R}^n \to \mathbb{R}^n$ denote the (time-dependent) flow of
		\begin{equation} \label{eq:yRdelta}
			\dot{y}(t) = A(t)y(t) + R_\delta(t,y(t)).
		\end{equation}
		Moreover, \Cref{lemma:integralequivalence} still holds when $R$ is replaced by $R_\delta$ and so the ordinary differential equation \eqref{eq:yRdelta} is equivalent to the integral equation \eqref{eq:variation constants Rdelta}. By (local) uniqueness of solutions, we have that \eqref{eq:Sts} still holds with $S$ replaced by $S_\delta$. The following result is the nonlinear analogue of \Cref{prop:X0s} and is a preliminary result to prove in \Cref{prop:invariance} the local invariance property of the center manifold.
		
		\begin{proposition} \label{prop:W^c(s)}
			Let $\eta \in (0, \min\{-a,b\})$ and $s \in \mathbb{R}$. Then
			\begin{align*}
				\mathcal{W}^c(s) = \{y_0 \in \mathbb{R}^n : \text{ there exists a solution of \eqref{eq:yRdelta}} 
				\text{ through $y_0$ belonging to $\BC_s^\eta$}\}.
			\end{align*}
		\end{proposition}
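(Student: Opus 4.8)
The plan is to prove the two inclusions separately, leaning on three facts already available: the characterization of $\mathcal{K}_s^\eta$ in \Cref{prop:ketas}, the linear description of $E_0(s)$ in \Cref{prop:X0s}, and the differential-integral equivalence of \Cref{lemma:integralequivalence}, which remains valid verbatim after replacing $R$ by $R_\delta$ (as noted just before the statement), so that \eqref{eq:yRdelta} is equivalent to \eqref{eq:variation constants Rdelta}.

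For the inclusion $\mathcal{W}^c(s) \subseteq \{\dots\}$, I would take $y_0 \in E_0(s)$ and set $u := \hat{u}_s^\eta(y_0)$, the fixed point of $\mathcal{G}_s^\eta(\cdot,y_0)$ from \Cref{thm:contraction}. Unwinding the fixed-point identity $u = U_s^\eta y_0 + \mathcal{K}_s^\eta(\tilde{R}_\delta(u))$ and using that, by \Cref{prop:ketas}, $\mathcal{K}_s^\eta(\tilde{R}_\delta(u))$ solves \eqref{eq:inhomogeneous CMT} with forcing $f = R_\delta(\cdot,u(\cdot))$, one finds (evaluating at $t=s$ to identify $u(s)=y_0+(\mathcal{K}_s^\eta f)(s)$) that $u$ itself satisfies \eqref{eq:variation constants Rdelta}; by \Cref{lemma:integralequivalence} this makes $u$ a solution of \eqref{eq:yRdelta}. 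Since $u \in \BC_s^\eta$ and $u(s) = \mathcal{C}(s,y_0)$ by definition, $\mathcal{C}(s,y_0)$ lies in the right-hand set.

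For the reverse inclusion, suppose $u \in \BC_s^\eta$ solves \eqref{eq:yRdelta} with $u(s) = y_0$. Then $f := R_\delta(\cdot,u(\cdot)) = \tilde{R}_\delta(u)$ lies in $\BC_s^\eta$ (\Cref{lemma:substitution}), and by \Cref{lemma:integralequivalence} the function $u$ solves \eqref{eq:inhomogeneous CMT} with this forcing $f$. Comparing with $\mathcal{K}_s^\eta f$, which solves the same linear equation, their difference $w := u - \mathcal{K}_s^\eta f \in \BC_s^\eta$ satisfies $w(t) = U(t,s)w(s)$ for all $t$; \Cref{prop:X0s} then gives $w(s) \in E_0(s)$, so $w = U_s^\eta(w(s))$, and since $\mathcal{K}_s^\eta f$ has vanishing $E_0(s)$-component at $s$ by \Cref{prop:ketas}, we get $w(s) = \pi_0(s)u(s) = \pi_0(s)y_0 =: z_0 \in E_0(s)$. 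Hence $u = U_s^\eta z_0 + \mathcal{K}_s^\eta(\tilde{R}_\delta(u)) = \mathcal{G}_s^\eta(u,z_0)$, so $u = \hat{u}_s^\eta(z_0)$ by uniqueness of the fixed point (\Cref{thm:contraction}), and therefore $y_0 = u(s) = \mathcal{C}(s,z_0) \in \mathcal{W}^c(s)$.

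The routine parts are entirely bookkeeping; the one place to be careful is the reverse inclusion, where one must notice that the forcing term built from the unknown solution $u$ is again in $\BC_s^\eta$ (so that \Cref{prop:ketas} applies) and then combine the two defining properties of $\mathcal{K}_s^\eta$ — solving the inhomogeneous equation and annihilating the $E_0(s)$-component at time $s$ — to identify $w(s)$ precisely as $\pi_0(s)y_0$. No new estimates beyond those already established are needed.
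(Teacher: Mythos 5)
Your proposal is correct and follows essentially the same route as the paper: the forward inclusion unwinds the fixed-point identity exactly as in the paper's proof, and your reverse inclusion, phrased via the difference $w = u - \mathcal{K}_s^\eta(\tilde{R}_\delta(u))$ together with \Cref{prop:X0s} and the vanishing $E_0(s)$-component, just spells out the uniqueness property of $\mathcal{K}_s^\eta$ that the paper invokes directly from \Cref{prop:ketas}. No gaps.
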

		\begin{proof}
			Choose $y_0 \in \mathcal{W}^c(s)$, then $y_0 = \mathcal{C}(s, z_0) = \hat{u}_s^\eta(z_0)(s)$ for some $z_0 \in E_0(s)$. \Cref{prop:ketas} shows that $\mathcal{K}_s^\eta \tilde{R}_{\delta}(u)$ is the unique solution of \eqref{eq:inhomogeneous CMT} with $f = \tilde{R}_{\delta}(u)$. Since $u = \hat{u}_s^\eta(z_0)$ is a fixed point of $\mathcal{G}_s^\eta(\cdot,z_0)$, we get
			\begin{align*}
				u(t) 
				&= U(t,s)z_0 + (\mathcal{K}_s^\eta \tilde{R}_{\delta}(u))(t) \\
				&= U(t,s)z_0  + U(t,s)(\mathcal{K}_s^\eta \tilde{R}_{\delta}(u))(s)
				+ \int_s^tU(t, \tau){R}_{\delta}(\tau, u(\tau))d\tau \\
				&= U(t,s)u(s) + \int_s^tU(t, \tau){R}_{\delta}(\tau, u(\tau))d\tau.
			\end{align*}
			for all $(t,s) \in \mathbb{R}^2$. Hence, $u = \hat{u}_s^\eta(z_0)$ is a solution of \eqref{eq:variation constants Rdelta}, and so \eqref{eq:yRdelta}, through $u(s) = y_0$ which belongs to $\BC_s^\eta$. Conversely, let $y_0 \in \mathbb{R}^n$ such that there exists a solution $u$ of \eqref{eq:yRdelta}, and so \eqref{eq:variation constants Rdelta}, in $\BC_s^\eta$ satisfying $u(s) = y_0$. It follows from \Cref{prop:ketas} that
			\begin{equation*}
				u(t) = U(t, s)\pi_0(s)u(s) + (\mathcal{K}_s^\eta \tilde{R}_{\delta}(u))(t).
			\end{equation*}
			Hence, $u = \mathcal{G}_s^\eta (u, \pi_0(s)u(s))$ so $y_0 = u(s) = \mathcal{C}(s, \pi_0(s)y_0) \in \mathcal{W}^c(s)$ by uniqueness of the fixed point.
		\end{proof}

		\begin{proposition} \label{prop:invariance}
			The local center manifold $\mathcal{W}_{\loc}^c$ has the following properties.
			\begin{enumerate}
				\item[$1.$] $\mathcal{W}^c_{\loc}$ is \emph{locally invariant}: if $(s, y_0) \in \mathbb{R} \times \mathcal{W}^c_{\loc}$ and $t_{-},t_{+} \in \mathbb{R}$ with $ s \in (t_{-}, t_+)$ such that $S(t, s, y_0) \in B(0, \delta)$ for all $t \in (t_{-},t_{+})$, then $S(t, s, y_0) \in \mathcal{W}^c_{\loc}$.
				\item[$2.$] $\mathcal{W}_{\loc}^c$ contains every solution of \eqref{eq:A(t) R} that exists on $\mathbb{R}$ and remains sufficiently small for all positive and negative time: if $u: \mathbb{R} \rightarrow B(0,\delta)$ is a solution of \eqref{eq:A(t) R}, then $u(t) \in \mathcal{W}^c_{\loc}$ for all $t \in \mathbb{R}$.
				\item[$3.$] If $(s, y_0) \in \mathbb{R} \times \mathcal{W}^c_{\loc}$, then $S(t, s, y_0) = \hat{u}_t^\eta(\pi_0(t)S(t, s, y_0))(t) = \mathcal{C}(t, \pi_0(t)S(t, s, y_0))$ for all $t \in (t_{-},t_{+})$.
				\item[$4.$] $0 \in \mathcal{W}^c_{\loc}$ and $\mathcal{C}(t, 0) = 0$ for all $t \in \mathbb{R}$.
			\end{enumerate}
		\end{proposition}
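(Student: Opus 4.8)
The strategy is to derive properties 1--3 from the characterization of $\mathcal{W}^c(s)$ in \Cref{prop:W^c(s)} together with the translation/cocycle identities \eqref{eq:Sts} for $S_\delta$, and to obtain property 4 directly from $\hat u_s^\eta(0)=0$ (\Cref{thm:contraction}) and the fact that $R_\delta(t,0)=0$.

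First I would prove property 2. Suppose $u:\mathbb{R}\to B(0,\delta)$ solves \eqref{eq:A(t) R}; since $R_\delta=R$ on $\mathbb{R}\times B(0,\delta)$, $u$ also solves \eqref{eq:yRdelta}. Because $u$ is bounded it lies in $\BC_s^\eta$ for every $s$, so \Cref{prop:W^c(s)} gives $u(s)\in\mathcal{W}^c(s)$; as $u(s)\in B(0,\delta)$, in fact $u(s)\in\mathcal{W}^c_{\loc}$ (using $\mathcal{C}(s,\pi_0(s)u(s))=u(s)$, the representation from \Cref{prop:W^c(s)}), for every $s\in\mathbb{R}$, which is the claim. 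Next, for property 1, fix $(s,y_0)\in\mathbb{R}\times\mathcal{W}^c_{\loc}$ and $t_-<s<t_+$ with $S(t,s,y_0)\in B(0,\delta)$ for all $t\in(t_-,t_+)$. On this interval $S$ and $S_\delta$ agree (again $R_\delta=R$ there), so $v(\cdot):=S(\cdot,s,y_0)$ solves \eqref{eq:yRdelta} on $(t_-,t_+)$. The obstacle is that $v$ is only defined locally, whereas \Cref{prop:W^c(s)} needs a solution on all of $\mathbb{R}$ in $\BC_t^\eta$. To fix this, I would fix $t\in(t_-,t_+)$ and note $y_0\in\mathcal{W}^c_{\loc}\subset\mathcal{W}^c(s)$, so by \Cref{prop:W^c(s)} there is a global solution $u$ of \eqref{eq:yRdelta} in $\BC_s^\eta$ with $u(s)=y_0$; by local uniqueness of solutions of \eqref{eq:yRdelta} (Picard--Lindelöf), $u$ and $v$ coincide on $(t_-,t_+)$, hence $u(t)=S(t,s,y_0)$. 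Now $\tilde u:=u(\cdot)$ is a global solution of \eqref{eq:yRdelta} belonging to $\BC_t^\eta$ (the weighted spaces $\BC_s^\eta$ and $\BC_t^\eta$ contain the same functions, only the norm changes) passing through $u(t)=S(t,s,y_0)$, so \Cref{prop:W^c(s)} yields $S(t,s,y_0)\in\mathcal{W}^c(t)$; since $S(t,s,y_0)\in B(0,\delta)$ we get $S(t,s,y_0)\in\mathcal{W}^c_{\loc}$.

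For property 3, with the same $u$ as above: \Cref{prop:W^c(s)} (the converse direction of its proof) shows that a global solution of \eqref{eq:yRdelta} in $\BC_t^\eta$ through the point $S(t,s,y_0)$ is exactly $u=\mathcal{G}_t^\eta(u,\pi_0(t)u(t))$, whence by uniqueness of the fixed point $u=\hat u_t^\eta(\pi_0(t)S(t,s,y_0))$ and therefore $S(t,s,y_0)=u(t)=\hat u_t^\eta(\pi_0(t)S(t,s,y_0))(t)=\mathcal{C}(t,\pi_0(t)S(t,s,y_0))$, for all $t\in(t_-,t_+)$. Finally, property 4: the constant function $u\equiv 0$ solves \eqref{eq:yRdelta} since $R_\delta(t,0)=0$, it lies in $\BC_s^\eta$, and it passes through $0$, so \Cref{prop:W^c(s)} gives $0\in\mathcal{W}^c(s)$; as $0\in B(0,\delta)$, $0\in\mathcal{W}^c_{\loc}$. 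Moreover $\mathcal{C}(t,0)=\hat u_t^\eta(0)(t)=0$ by \Cref{thm:contraction}. I expect the only genuinely delicate point to be the bookkeeping in property 1 — passing from the merely local solution $S(\cdot,s,y_0)$ to a global one via \Cref{prop:W^c(s)} and local uniqueness, and checking that membership in $\BC_s^\eta$ transfers to $\BC_t^\eta$ so that \Cref{prop:W^c(s)} can be reapplied at the base time $t$.
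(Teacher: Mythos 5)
Your proposal is correct and follows essentially the same route as the paper: all four properties are deduced from the characterization of $\mathcal{W}^c(s)$ in \Cref{prop:W^c(s)}, uniqueness of solutions of \eqref{eq:yRdelta}, the fact that $R_\delta=R$ on $B(0,\delta)$, and $\hat{u}_s^\eta(0)=0$. The only differences are cosmetic — you prove item 2 independently rather than via item 1, and you spell out the transfer from $\BC_s^\eta$ to $\BC_t^\eta$ (which the paper leaves implicit) — so no further comment is needed.
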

		\begin{proof}
			We prove the four assertions step by step.
			
			1. By \Cref{prop:W^c(s)}, choose a solution $u \in \BC_s^\eta$ of \eqref{eq:yRdelta} such that $u(s) = y_0$. Note that $u(s) = S_\delta(s, s, y_0)$, so by uniqueness $u(t) = S_\delta(t, s, y_0)$ for all $t \in (t_{-},t_{+})$. Then $S_\delta(t, s, y_0) \in \mathcal{W}^c(t) \subset \mathcal{W}^c$. Since $S_\delta(t, s, y_0) \in B(0, \delta)$, it follows that $S(t,s,y_0) = S_\delta(t, s, y_0) \in \mathcal{W}^c_{\loc}$.
			
			2. Recall that \eqref{eq:A(t) R} and \eqref{eq:yRdelta} are equal on $B(0, \delta)$. If $u$ is such a solution, then $u \in \BC_{s}^\eta$. The assumption that $u$ takes values in $B(0,\delta)$ and \Cref{prop:W^c(s)} together imply with the first assertion the result. 
			
			3. In the proof of \Cref{prop:W^c(s)} it is shown that $y_0 = \mathcal{C}(s, \pi_0(s)y_0)$ for any $y_0 \in \mathcal{W}^c(s)$. So it is certainly true for $y_0 \in \mathcal{W}^c_{\loc}$ that $S_\delta(s, s, y_0) = y_0 = \hat{u}_s^\eta(\pi_0(s)S_\delta(s, s, y_0))(s) = \mathcal{C}(s, \pi_0(s)S_\delta(s, s, y_0))$. Because $\mathcal{W}^c_{\loc}$ is locally invariant, we have that $S_\delta(t, s, y_0) \in \mathcal{W}^c_{\loc}$ for all $t \in \mathbb{R}$ sufficiently close to $s$ and by uniqueness of solutions, $S_\delta(t, s, y_0) = \hat{u}_t^\eta(\pi_0(t)S_\delta(t, s, y_0)) = \mathcal{C}(t, \pi_0(t)S_\delta(t, s, y_0))$. Since we are on the local center manifold, we can replace $S_\delta$ with $S$.
			
			4. Notice that $\mathcal{C}(t,0) = \hat{u}_t^\eta(0)(t) = 0$ for all $t \in \mathbb{R}$, where the last equality follows from \Cref{thm:contraction}. Clearly, $0 = \mathcal{C}(t,0) \in \mathcal{W}_{\loc}^{c}$ and so the proof is complete.
		\end{proof}
		
		It is now possible to explain the fact that the dynamics on the center manifold is rather slow. Indeed, the local invariance of $\mathcal{W}_{\loc}^{c}$ (\Cref{prop:invariance}) in combination with \Cref{prop:W^c(s)} shows that solutions on the center manifold are in $\BC_s^\eta$ for some sufficiently small $\eta > 0$, i.e. their asymptotic behavior forward and backward in time can only be a limited exponential. 
		
		The next step is to show that the map $\mathcal{C}$ inherits the same order of smoothness as the time-dependent nonlinear perturbation $R$. Proving additional smoothness of center manifolds requires work. A well-known technique to increase smoothness of center manifolds is via the theory of contraction on scales of Banach spaces \cite{Vanderbauwhede1987}. Since this part of the theory is rather technical, it is delegated to Appendix A. The main result is presented in \Cref{thm:smoothnessC} and simply states that the map $\mathcal{C}$ is $C^k$-smooth and so $\mathcal{W}^c$ and $\mathcal{W}_{\loc}^c$ are both $C^k$-smooth manifolds in $\mathbb{R}^n$. The additional regularity of the center manifold allows us to study its tangent bundle.
		
		\begin{proposition} \label{prop:bundle}
			The tangent bundle of $\mathcal{W}^c$ and $\mathcal{W}_{\loc}^c$ is $E_0$: $D_2\mathcal{C}(s, 0)y_0 = y_0$ for all $(s,y_0) \in E_0$.
		\end{proposition}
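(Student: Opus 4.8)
The plan is to differentiate the fixed-point identity that defines $\mathcal{C}$. Recall from \Cref{thm:smoothnessC} (proved in Appendix~A) that $\mathcal{C}$ is $C^k$; the argument there shows in particular that for each $s \in \mathbb{R}$ the fixed-point map $\hat{u}_s^\eta : E_0(s) \to \BC_s^\eta$ is $C^1$ (possibly after slightly enlarging $\eta$, which stays below $\min\{-a,b\}$ and is irrelevant below). Let $\eva_s : \BC_s^\eta \to \mathbb{R}^n$, $\eva_s(u) := u(s)$, denote the bounded linear evaluation map. By \eqref{eq:mapC} we have $\mathcal{C}(s,\cdot) = \eva_s \circ \hat{u}_s^\eta$, hence
\begin{equation*}
	D_2\mathcal{C}(s,0)y_0 = \big(D\hat{u}_s^\eta(0)y_0\big)(s), \qquad y_0 \in E_0(s),
\end{equation*}
so it suffices to identify $D\hat{u}_s^\eta(0)$.

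I would first record two facts. By \Cref{thm:contraction} we have $\hat{u}_s^\eta(0) = 0$. Next, since $R(t,0) = 0$ and $D_2R(t,0) = 0$ for all $t \in \mathbb{R}$, and $R_\delta(t,y) = \Xi_{\delta,t}(y)R(t,y)$ with $\Xi_{\delta,t}$ bounded by $1$ and globally Lipschitz (\Cref{prop:global lipschitz}), the product rule yields $D_2R_\delta(t,0) = \Xi_{\delta,t}(0)D_2R(t,0) + R(t,0)D\Xi_{\delta,t}(0) = 0$ for every $t$. By the standard differentiation rule for substitution (Nemytskii) operators on the weighted spaces $\BC_s^\eta$ — the same rule used in Appendix~A — the Fréchet derivative of $\tilde{R}_\delta$ at the zero function equals the substitution operator associated with $D_2R_\delta(\cdot,0) \equiv 0$, that is, $D\tilde{R}_\delta(0) = 0$.

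Now differentiate the defining identity $\hat{u}_s^\eta(y_0) = U_s^\eta y_0 + \mathcal{K}_s^\eta\big(\tilde{R}_\delta(\hat{u}_s^\eta(y_0))\big)$ at $y_0 = 0$ by the chain rule, using linearity and boundedness of $U_s^\eta$ (\Cref{lemma:Uetas}) and $\mathcal{K}_s^\eta$ (\Cref{prop:ketas}):
\begin{equation*}
	D\hat{u}_s^\eta(0)y_0 = U_s^\eta y_0 + \mathcal{K}_s^\eta\,D\tilde{R}_\delta(0)\,D\hat{u}_s^\eta(0)y_0 = U_s^\eta y_0,
\end{equation*}
because $\hat{u}_s^\eta(0) = 0$ and $D\tilde{R}_\delta(0) = 0$. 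Evaluating at $t = s$ and using $U(s,s) = I$ gives $D_2\mathcal{C}(s,0)y_0 = (U_s^\eta y_0)(s) = U(s,s)y_0 = y_0$. Equivalently, in terms of the decomposition $\mathcal{C}(s,y_0) = y_0 + \mathcal{H}(s,y_0)$ this reads $D_2\mathcal{H}(s,0) = 0$, so the $C^k$-fiber $\mathcal{W}^c(s) = \operatorname{graph}(\mathcal{H}(s,\cdot))$ is tangent to $E_0(s)$ at the point $0 = \mathcal{C}(s,0)$; letting $s$ range over $\mathbb{R}$ identifies the tangent bundle of $\mathcal{W}^c$ with $E_0$, and since $\mathcal{W}^c_{\loc}$ coincides with $\mathcal{W}^c$ in a neighborhood of the origin the same conclusion holds for $\mathcal{W}^c_{\loc}$.

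The only real obstacle is the functional-analytic bookkeeping behind the assertions that $\hat{u}_s^\eta$ is differentiable in $y_0$ and that the fixed-point identity may be differentiated termwise; both rest on the contraction-on-scales-of-Banach-spaces machinery of Appendix~A and on the differentiation rule for the substitution operator $\tilde{R}_\delta$ (including the customary mild loss of exponent). Once these are granted, the computation is immediate: the vanishing $D\tilde{R}_\delta(0) = 0$ trivializes the operator $I - \mathcal{K}_s^\eta D\tilde{R}_\delta(0)$ that one would otherwise need to invert, leaving $D\hat{u}_s^\eta(0) = U_s^\eta$.
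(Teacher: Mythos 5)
Your proposal is correct and follows essentially the same route as the paper: differentiate the fixed-point identity $\hat{u}_s^{\eta}(y_0) = U_s^{\eta}y_0 + \mathcal{K}_s^{\eta}(\tilde{R}_\delta(\hat{u}_s^{\eta}(y_0)))$ at $y_0 = 0$, use $\hat{u}_s^{\eta}(0) = 0$ and $D\tilde{R}_\delta(0) = 0$ to get $D\hat{u}_s^{\eta}(0) = U_s^{\eta}$, and evaluate at $t = s$ via the evaluation operator. The paper handles the exponent bookkeeping explicitly by differentiating at $\eta_-$ and composing with the embedding $\mathcal{J}_s^{\eta,\eta_-}$, which is the same point you defer to Appendix~A; your extra justification that $D_2R_\delta(t,0)=0$ is a detail the paper simply asserts.
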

		\begin{proof}
			Let $ \eta \in [\eta_{-},\eta_{+}] \subset (0,\min\{-a,b\})$ such that $k \eta_{-} < \eta_{+}$. Differentiating
			\begin{equation*}
				\hat{u}_s^{\eta_{-}}(y_0) = U_s^{\eta_{-}} y_0 + \mathcal{K}_s^{{\eta_{-}}} \circ \tilde{R}_{\delta}(\hat{u}_s^{\eta_{-}}(y_0))
			\end{equation*}
			with respect to $y_0$ yields
			\begin{equation*}
				D\hat{u}_s^{\eta_{-}}(y_0) = U_s^{\eta_{-}} + \mathcal{K}_s^{{\eta_{-}}} \circ \tilde{R}_{\delta}^{{(1)}}(\hat{u}_s^{\eta_{-}}(y_0)) \circ D\hat{u}_s^{\eta_{-}}(y_0).
			\end{equation*}
			Setting $y_0 = 0$ and recalling the fact that $\hat{u}_s^{\eta_{-}}(0) = \tilde{R}_{\delta}^{(1)}(0) = 0$ shows that $D\hat{u}_s^{\eta_{-}}(0) = U_s^{\eta_{-}}$. If $\eva_s : \BC_s^\eta \to \mathbb{R}^n : f \mapsto f(s)$ denotes the bounded linear evolution operator (at time $s$), then
			\begin{equation*}
				D_2\mathcal{C}(s,0) = \eva_s(D(\mathcal{J}_s^{\eta, \eta_{-}} \circ \hat{u}_{s}^{\eta_{-}})(0)) = \eva_s(U_s^{\eta}) = I,
			\end{equation*}
			which proves the claim.    
		\end{proof}
		Since our original system \eqref{eq:A(t) R} is $T$-periodic, it is not surprising that the center manifold itself is $T$-periodic in a neighborhood of zero. To prove this, let us define for all $s \in \mathbb{R}$ and sufficiently small $\delta > 0$ the map $N_s : E_0(s) \to E_0(s)$ by
		\begin{equation*}
			N_s(y_0) := \pi_0(s)S_\delta(s + T,s,\mathcal{C}(s,y_0)).
		\end{equation*}
		\begin{lemma} \label{lemma:invertibility}
			The function $N_s$ is invertible in a neighborhood of the origin. Moreover, this neighborhood can be written as $U \cap E_0(s)$ for some open neighborhood $U \subset \mathbb{R}^n$ of zero, independent of $s$.
		\end{lemma}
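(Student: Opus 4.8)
The plan is to apply the inverse function theorem to $N_s$ at the origin and then to upgrade the resulting (a priori $s$-dependent) neighbourhood to a single one, exploiting the $T$-periodicity in $s$ of all the data. First I would record regularity and the value at $0$. Since $R_\delta$ is globally Lipschitz and bounded with $R_\delta(\cdot,0)=0$ (\Cref{prop:global lipschitz}, \Cref{cor:Rdelta}), equation \eqref{eq:yRdelta} has $y\equiv 0$ as a solution and a globally defined $C^k$ flow, so $S_\delta(s+T,s,\cdot):\mathbb{R}^n\to\mathbb{R}^n$ is $C^k$ and $S_\delta(s+T,s,0)=0$. Combined with $\mathcal{C}(s,0)=0$ (\Cref{prop:invariance}) and the $C^k$-smoothness of $\mathcal{C}$ (\Cref{thm:smoothnessC}), this shows $N_s:E_0(s)\to E_0(s)$ is $C^k$ with $N_s(0)=0$.

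Next I would compute the derivative at the origin. By the chain rule, $DN_s(0)=\pi_0(s)\,D_3S_\delta(s+T,s,0)\,D_2\mathcal{C}(s,0)$. The variational equation of \eqref{eq:yRdelta} along $y\equiv 0$ is $\dot v=(A(t)+D_2R_\delta(t,0))v=A(t)v$, because $R_\delta$ agrees with $R$ near the origin and $D_2R(\cdot,0)=0$; hence $D_3S_\delta(s+T,s,0)=U(s+T,s)$. By \Cref{prop:bundle}, $D_2\mathcal{C}(s,0)$ is the inclusion $E_0(s)\hookrightarrow\mathbb{R}^n$, and since $U(s+T,s)$ commutes with $\pi_0(s)$ and maps $E_0(s)$ onto $E_0(s+T)=E_0(s)$ (\Cref{prop:criteria}, items $5$--$6$, together with the $T$-periodicity of $\pi_0$), one obtains $DN_s(0)=U_0(s+T,s):E_0(s)\to E_0(s)$, a linear isomorphism with inverse $U_0(s,s+T)$. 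The inverse function theorem in the finite-dimensional space $E_0(s)$ then already gives that $N_s$ is a local $C^k$-diffeomorphism near $0$.

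To make the neighbourhood independent of $s$, I would invoke a quantitative inverse function theorem: a $C^1$ map $g$ on $B(0,r)\cap E_0(s)$ with $g(0)=0$, $\|Dg(0)^{-1}\|\le M$ and $\sup_{\|y\|\le r}\|Dg(y)-Dg(0)\|\le \tfrac1{2M}$ is invertible on $B(0,r)\cap E_0(s)$, with image containing $B(0,r/2M)\cap E_0(s)$. It therefore suffices to bound, uniformly in $s$, both $\|DN_s(0)^{-1}\|$ and a radius on which $DN_s$ stays close to $DN_s(0)$. For the first, $s\mapsto U(s,s+T)=U(s+T,s)^{-1}$ is continuous and $T$-periodic by \eqref{eq:propsU(t,s)}, hence bounded, and $\sup_s\|\pi_0(s)\|\le N<\infty$ by \Cref{prop:criteria}, so $M:=\sup_s\|DN_s(0)^{-1}\|<\infty$. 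For the second, $A$, $R_\delta$, $\pi_i$ and $U$ are all $T$-periodic, whence the flow satisfies $S_\delta(t+T,s+T,\cdot)=S_\delta(t,s,\cdot)$, and — via the time-shift isometry $\BC^\eta_s\to\BC^\eta_{s+T}$ and the uniqueness of the fixed point $\hat u_s^\eta$ of the operator family $\mathcal{G}_s^\eta$ — $\mathcal{C}$ is $T$-periodic in $s$ as well, i.e. $\mathcal{C}(s+T,\cdot)=\mathcal{C}(s,\cdot)$. Hence $s\mapsto DN_s(\cdot)$ is $T$-periodic, so it is enough to control the modulus of continuity of $DN_s$ at $0$ for $s$ in the compact interval $[0,T]$, where it follows from the joint continuity of $(s,y_0)\mapsto DN_s(y_0)$ on the bundle $E_0$ and uniform continuity on compacta. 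This yields a radius $\rho>0$, independent of $s$, with $\|DN_s(y_0)-DN_s(0)\|\le\tfrac1{2M}$ whenever $\|y_0\|\le\rho$; taking $U:=B(0,\rho)\subset\mathbb{R}^n$ then makes $N_s$ invertible on $U\cap E_0(s)$ for every $s\in\mathbb{R}$.

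The derivative computation and the pointwise invertibility are routine; the genuine obstacle is the last step, namely passing from the pointwise inverse function theorem to an $s$-independent domain. This rests on the equicontinuity of the derivatives $DN_s$ near $0$, and it is precisely here that the $T$-periodicity in $s$ is indispensable: it replaces $s\in\mathbb{R}$ by the compact interval $[0,T]$ and thereby brings the smoothness of $\mathcal{C}$ from \Cref{thm:smoothnessC} to bear uniformly in $s$.
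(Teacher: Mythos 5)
Your proposal is correct, and the first half (computing $DN_s(0)=U_0(s+T,s)$ via the chain rule, \Cref{prop:bundle} and \Cref{prop:criteria}, then invoking the inverse function theorem) coincides with the paper's argument. Where you genuinely diverge is the uniformity-in-$s$ step. The paper obtains an $s$-independent neighborhood quantitatively: by \Cref{cor:LipschitzC} the map $D_2\mathcal{C}(s,\cdot)$ is Lipschitz at $0$ with a constant $L(1)$ independent of $s$, so $\|DN_s(y_0)-DN_s(0)\|\leq NC_\varepsilon e^{\varepsilon T}L(1)\|y_0\|$, which is an explicit, $s$-uniform modulus of continuity and feeds directly into the quantitative inverse function theorem. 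You instead argue softly: you first establish that $\mathcal{C}(s+T,\cdot)=\mathcal{C}(s,\cdot)$ by conjugating the fixed-point problem with the time shift $\sigma_T$ (using $T$-periodicity of $U$, $\pi_i$, $R_\delta$ and uniqueness of the fixed point, so that $\sigma_T\mathcal{G}^\eta_s(u,y_0)=\mathcal{G}^\eta_{s+T}(\sigma_T u,y_0)$), conclude that $s\mapsto DN_s$ is $T$-periodic, and then use compactness of $[0,T]$ and uniform continuity of $(s,y_0)\mapsto DN_s(y_0)$ to extract a uniform radius. This is legitimate and not circular, since your periodicity argument does not use the lemma — but be aware that the global $T$-periodicity of $\mathcal{C}$ is exactly the content of the proposition the paper proves \emph{after} (and by means of) this lemma; your shift argument in fact proves that proposition directly and globally, whereas the paper deliberately derives only a local version later and therefore cannot use it here. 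The trade-off: the paper's route is quantitative, reuses the appendix machinery (\Cref{cor:LipschitzC}), and respects the paper's logical ordering; your route is more elementary at this point (periodicity plus compactness, no Lipschitz estimate on $D_2\mathcal{C}$ needed), and your explicit quantitative inverse function theorem has the added virtue of guaranteeing that $N_s(U\cap E_0(s))$ contains a ball of $s$-independent radius, which is what the subsequent periodicity proposition actually uses. If you adopt your route, spell out the intertwining identities for $U_s^\eta$, $\mathcal{K}_s^\eta$ and $\tilde R_\delta$ under $\sigma_T$ rather than leaving them as an aside, since they carry the whole uniformity argument.
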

		\begin{proof}
			Recall the well-known standard result: $D[S_\delta(t, s, \cdot)](0) = U(t,s)$ for all $(t,s) \in \mathbb{R}^2$, see for instance \cite[Section 17.6]{Hirsch2013}. The differential of $N_s$ in $0$ is given by
			\begin{align*}
				DN_s(0)
				&= \pi_0(s) \circ D[S_\delta(s + T, s, \cdot)](\mathcal{C}(s, 0)) \circ D[\mathcal{C}(s,\cdot)](0) \\
				&= \pi_0(s) \circ D[S_\delta(s + T, s, \cdot)](0)\\
				&= \pi_0(s)U_0(s + T, s)  = U_0(s + T, s),
			\end{align*}
			where we used \Cref{prop:criteria}, \Cref{prop:bundle}, and the fact that $U_0(s + T, s)y_0 \in E_0(s + T) = E_0(s)$. It follows from \Cref{prop:criteria} that $DN_s(0) = U_0(s+T,s)$ is a bounded linear isomorphism and so $N_s$ is locally invertible by the inverse function theorem. To prove that the neighborhood may be written as claimed, let us first observe that for a given $\varepsilon > 0$ there holds
			\begin{align*}
				\| DN_s(y_0) - DN_s(0) \|
				&\leq \|U_0(s+T,s)\pi_{0}(s) \| \| D\mathcal{C}(s,y_0) - D\mathcal{C}(s,0) \| \\
				& \leq N C_\varepsilon e^{\varepsilon T} \| D\mathcal{C}(s,y_0) - D\mathcal{C}(s,0) \| \\
				& \leq N C_\varepsilon e^{\varepsilon T} L(1) ||y_0|| \to 0, \quad \mbox{ as } y_0 \to 0,
			\end{align*}
			due to \Cref{prop:criteria} and \Cref{cor:LipschitzC}. Hence, $DN_s(y_0)$ is uniformly convergent (in the variable $s$) as $y_0 \to 0$ and so the implicit function may be defined on a neighborhood that does not depend on $s$.
		\end{proof}
		
		\begin{proposition}
			There exists a $\delta > 0$ such that $\mathcal{C}(s + T, y_0) = \mathcal{C}(s, y_0)$ for all $(s,y_0) \in E_0$ satisfying $\|y_0\| \leq \delta$.    
		\end{proposition}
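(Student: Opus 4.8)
The plan is to leverage the $T$-periodicity of the whole modified setup together with the uniqueness of the fixed point from \Cref{thm:contraction}. Recall that $A$ and the projectors $\pi_{-},\pi_0,\pi_{+}$ are $T$-periodic (\Cref{prop:criteria}), hence so is $R_\delta(\cdot,y)$ for each $y$; in particular $E_0(s)=\ran\pi_0(s)=\ran\pi_0(s+T)=E_0(s+T)$, and from the last identity in \eqref{eq:propsU(t,s)} we have $U(t+T,s+T)=U(t,s)$ for all $t,s\in\mathbb{R}$. Now fix $\delta>0$ small enough that \Cref{thm:contraction} applies and fix $(s,y_0)\in E_0$, so that $y_0\in E_0(s)=E_0(s+T)$. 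Set $u:=\hat{u}_{s+T}^{\eta}(y_0)$, the unique fixed point of $\mathcal{G}_{s+T}^{\eta}(\cdot,y_0)$, and define the time-shifted function $v(t):=u(t+T)$. The first thing I would check is that $v\in\BC_s^{\eta}$ with $\|v\|_{\eta,s}=\|u\|_{\eta,s+T}$: this is immediate because $e^{-\eta|t-s|}\|v(t)\|=e^{-\eta|(t+T)-(s+T)|}\|u(t+T)\|$, i.e. $u\mapsto u(\cdot+T)$ is an isometry from $\BC_{s+T}^{\eta}$ onto $\BC_s^{\eta}$.

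Next I would verify that $v$ is a fixed point of $\mathcal{G}_s^{\eta}(\cdot,y_0)$. Writing the variation-of-constants identity \eqref{eq:variation constants Rdelta} for $u$ (with base time $s+T$) at the point $t+T$, substituting $\tau=\sigma+T$ in the integral and using $U(t+T,\sigma+T)=U(t,\sigma)$ together with $R_\delta(\sigma+T,\cdot)=R_\delta(\sigma,\cdot)$, one finds $v(t)=U(t,s)v(s)+\int_s^t U(t,\sigma)R_\delta(\sigma,v(\sigma))\,d\sigma$, i.e. $v$ solves \eqref{eq:variation constants Rdelta} with base time $s$. To pin down the $E_0(s)$-component of $v(s)$, evaluate the fixed-point equation $u=U_{s+T}^{\eta}y_0+\mathcal{K}_{s+T}^{\eta}\tilde{R}_{\delta}(u)$ at $t=s+T$ and apply $\pi_0(s+T)$; since $\mathcal{K}_{s+T}^{\eta}\tilde{R}_{\delta}(u)$ has vanishing $E_0(s+T)$-component at $s+T$ (\Cref{prop:ketas}) and $\pi_0(s+T)y_0=y_0$, this gives $\pi_0(s)v(s)=\pi_0(s+T)u(s+T)=y_0$. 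Consequently $v-U_s^{\eta}y_0$ lies in $\BC_s^{\eta}$, solves \eqref{eq:inhomogeneous CMT} with $f=\tilde{R}_{\delta}(v)$, and has vanishing $E_0(s)$-component at $s$, so \Cref{prop:ketas} forces $v-U_s^{\eta}y_0=\mathcal{K}_s^{\eta}\tilde{R}_{\delta}(v)$, that is $v=\mathcal{G}_s^{\eta}(v,y_0)$. By uniqueness of the fixed point (\Cref{thm:contraction}), $v=\hat{u}_s^{\eta}(y_0)$, and evaluating at $t=s$ yields
\begin{equation*}
\mathcal{C}(s,y_0)=\hat{u}_s^{\eta}(y_0)(s)=v(s)=u(s+T)=\hat{u}_{s+T}^{\eta}(y_0)(s+T)=\mathcal{C}(s+T,y_0).
\end{equation*}
This in fact holds for every $(s,y_0)\in E_0$, so a fortiori under the restriction $\|y_0\|\le\delta$.

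I do not expect a genuine obstacle here: the proof is essentially an exercise in keeping track of the time shift. The only point requiring mild care is the consistency of the weighted spaces, and it is painless precisely because the weight $|t-s|$ is translation invariant, so the shift $u\mapsto u(\cdot+T)$ maps $\BC_{s+T}^{\eta}$ \emph{isometrically} onto $\BC_s^{\eta}$ rather than merely continuously. One could instead argue through the local invariance of $\mathcal{W}_{\loc}^c$ (\Cref{prop:invariance}) and the locally invertible return map $N_s$ of \Cref{lemma:invertibility} — flowing a point $\mathcal{C}(s,y_0)$ over one period and reading off its $E_0$-coordinate shows $S_\delta(s+T,s,\mathcal{C}(s,y_0))=\mathcal{C}(s+T,N_s(y_0))$ — but converting that into $\mathcal{C}(s+T,\cdot)=\mathcal{C}(s,\cdot)$ again reduces to the uniqueness characterisation, so the direct shift argument above is the cleaner route.
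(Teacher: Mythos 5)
Your argument is correct, but it is genuinely different from the paper's proof. The paper restricts to $\|y_0\|\leq\delta$ because it argues through the dynamics: it first establishes, via the inverse function theorem, that the map $N_s(y_0)=\pi_0(s)S_\delta(s+T,s,\mathcal{C}(s,y_0))$ is invertible near the origin uniformly in $s$ (\Cref{lemma:invertibility}, which in turn leans on the smoothness theory, \Cref{prop:bundle} and \Cref{cor:LipschitzC}), writes $y_0=N_s(z_0)$, and then combines the representation $S_\delta(t,s,\cdot)=\mathcal{C}(t,\pi_0(t)S_\delta(t,s,\cdot))$ from \Cref{prop:invariance} with the flow periodicity in \eqref{eq:Sts} to close the loop. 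You instead work directly at the level of the fixed point problem: the time shift $u\mapsto u(\cdot+T)$ is an isometry of $\BC_{s+T}^\eta$ onto $\BC_s^\eta$, the $T$-periodicity of $U$ (third identity in \eqref{eq:propsU(t,s)}), of the projectors and of $R_\delta$ shows the shifted function again solves \eqref{eq:variation constants Rdelta} with base time $s$, and the characterization of $\mathcal{K}_s^\eta$ in \Cref{prop:ketas} together with the uniqueness in \Cref{thm:contraction} identifies it with $\hat{u}_s^\eta(y_0)$. This buys you two things: the argument is more elementary (no inverse function theorem, no differentiability of $\mathcal{C}$, no tangent-bundle information — only the Lipschitz existence theory of \Cref{sec:existence}), and it yields the stronger conclusion that $\mathcal{C}(s+T,\cdot)=\mathcal{C}(s,\cdot)$ on all of $E_0$, so the restriction $\|y_0\|\leq\delta$ in the statement is not needed for the modified system; the paper's route, by contrast, only ever produces the local statement because $N_s$ is inverted only near the origin, but it has the merit of exhibiting the periodicity as a direct consequence of the already-proved invariance properties of the flow on the manifold. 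The one point you should make explicit is that the fixed point $\hat{u}_{s+T}^\eta(y_0)$ satisfies the variation-of-constants identity between arbitrary pairs of times, which is exactly the computation carried out in the first half of the proof of \Cref{prop:W^c(s)}; with that reference in place, your proof is complete.
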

		\begin{proof}
			Let $(s,y_0) \in E_0$ be given. By \Cref{lemma:invertibility}, choose $\delta > 0$ such that if $\|y_0\| \leq \delta$, it is possible to write $y_0 = N_s(z_0)$. It follows from \Cref{prop:criteria}, \Cref{prop:invariance} and \eqref{eq:Sts} that
			\begin{align*}
				\mathcal{C}(s + T, y_0)
				&= \mathcal{C}(s + T, \pi_0(s)S_\delta(s + T, s, \mathcal{C}(s, z_0))) \\
				&= S_\delta(s + T, s, \mathcal{C}(s, z_0)) \\
				&= S_\delta(s, s - T, \mathcal{C}(s, z_0)) \\
				&= \mathcal{C}(s, \pi_0(s)S_\delta(s, s - T, \mathcal{C}(s, z_0))) \\
				&= \mathcal{C}(s, \pi_0(s)S_\delta(s + T, s, \mathcal{C}(s, z_0))) = \mathcal{C}(s, y_0).
			\end{align*}
			This proves the $T$-periodicity of the center manifold.  
		\end{proof}
		
		Recall that \eqref{eq:A(t) R} was just a time-dependent translation of \eqref{eq:ODE} via the given periodic solution. Hence, if $x$ is a solution of \eqref{eq:ODE} then $y = x - \gamma$ is a solution of \eqref{eq:A(t) R} and so
		\begin{align}
			\mathcal{W}_{\loc}^{c}(\Gamma) := \{ \gamma(s) + \mathcal{C}(s,y_0) \in \mathbb{R}^n: 
			(s,y_0) \in E_0 \mbox{ and } \mathcal{C}(s,y_0) \in B(0,\delta) \}
		\end{align}
		is a $T$-periodic $C^k$-smooth $(n_0+1)$-dimensional manifold in $\mathbb{R}^n$ defined in the vicinity of $\Gamma$ for a sufficiently small $\delta > 0$. To see this, recall that $\gamma$ is $T$-periodic and $C^k$-smooth together with the fact that $\mathcal{C}$ is $T$-periodic in the first component and $C^k$-smooth. Recall from \Cref{prop:invariance} that $\mathcal{C}(t,0) = 0$ and so $\Gamma \subset \mathcal{W}_{\loc}^{c}(\Gamma)$. We call $\mathcal{W}_{\loc}^{c}(\Gamma)$ a \emph{local center manifold around $\Gamma$} and notice that this manifold inherits all the properties of $\mathcal{W}_{\loc}^c$, which proves \Cref{thm:main}.
		
		\section{Examples and counterexamples} \label{sec:examples}
		It is widely known that center manifolds for equilibria have interesting qualitative properties \cite{Guckenheimer1983,Kuznetsov2023a,Sijbrand1985}. For example, such center manifolds are not necessarily unique and are not necessarily of the class $C^{\infty}$ even if the vector field is $C^{\infty}$-smooth. Of course, there always exists for $C^{\infty}$-smooth systems an open neighborhood $U_k$ around the equilibrium such that a center manifold is $C^k$-smooth on $U_k$. However, the neighborhood $U_k$ may shrink towards the equilibrium as $k \to \infty$, see \cite{Strien1979,Takens1984,Sijbrand1985} for explicit examples. When the vector field is analytic, there is the possibility of the existence of a non-analytic $C^{\infty}$-smooth center manifold, see \cite{Kelley1967,Sijbrand1985} for explicit examples. It is studied in \cite{Sijbrand1985} under which conditions a unique, $C^{\infty}$-smooth or analytic center manifold exists. 
		
		The aim of this section is to provide several explicit examples illustrating similar behavior for the periodic center manifolds. For example, we provide in \Cref{ex:analytic} an analytic $2\pi$-periodic two-dimensional center manifold near a nonhyperbolic cycle that is a cylinder. To complete the periodic two-dimensional center manifold theory, we provide in \Cref{ex:mobius} a system that admits a $2\pi$-periodic two-dimensional center manifold near a nonhyperbolic cycle that is a M\"obius band. Both examples are minimal polynomial vector fields admitting a cylinder or M\"obius band as a periodic center manifold.
		
		To illustrate the existence of a non-unique and non-analytic $C^{\infty}$-smooth periodic center manifold, we will study the analytic nonlinear periodically driven system
		\begin{equation} \label{eq:Ex2}
			\begin{dcases}
				\dot{x} = -x^2, \\
				\dot{y} = -y + \sin(t)x^2.
			\end{dcases}
		\end{equation}
		This vector field is a modification of the vector field used in \cite{Kelley1967} to illustrate the non-unique behavior of center manifolds for equilibria. Note that the system \eqref{eq:Ex2} is not of the form \eqref{eq:ODE} but already written in the style of \eqref{eq:A(t) R} where $A$ is autonomous and $R$ is $2\pi$-periodic in the first argument. The assumption of an autonomous linear part is not a restriction. Indeed, the Floquet normal form $U(t,0) = Q(t)e^{Bt}$, where $Q$ is $T$-periodic, $Q(0) = I$, $Q(t)$ is invertible for all $t \in \mathbb{R}$, and the matrix $B \in \mathbb{C}^{n \times n}$ that satisfies $U(T,0) = e^{BT}$ shows that a general system of the form \eqref{eq:A(t) R} is equivalent to the nonlinear periodically driven system
		\begin{equation} \label{eq:transformedA(t)R}
			\dot{z}(t) = Bz(t) + G(t,z(t)),
		\end{equation}
		where $G(t,z(t)) := Q(t)^{-1}R(t,Q(t)z(t))$, and $z$ satisfies the Lyapunov-Floquet transformation $y(t) = Q(t)z(t)$. Clearly \eqref{eq:transformedA(t)R} has an autonomous linear part and $G$ is $T$-periodic in the first component since $R$ and $Q$ are both $T$-periodic. Notice that the whole periodic center manifold construction from previous subsections still applies for systems of the form \eqref{eq:transformedA(t)R}. The reason we study systems of the form \eqref{eq:transformedA(t)R} instead of a general system of the form \eqref{eq:ODE} is to keep the calculations rather simple. Indeed, if one would like to cook up an explicit non-trivial example of a periodic center manifold near a nonhyperbolic cycle, one needs to be able to compute explicitly the periodic solution, the fundamental matrix and its associated Floquet multipliers, which is rather difficult for general systems of the form \eqref{eq:ODE}. We remark that the computations for the periodic center manifolds of simple periodically driven systems considered in this section are rather tedious compared with their equilibrium analogues.
		
		In \Cref{ex:nonanalytic} we show that \eqref{eq:Ex2} admits a non-analytic $2\pi$-periodic center manifold. Next, we show in \Cref{ex:nonunique} that \eqref{eq:Ex2} admits a $2\pi$-periodic center manifold that is  \emph{locally (non)-unique}, i.e. there exist subneighborhoods of any neighborhood of $\mathbb{R} \times \{0\}$ where the center manifold is unique and others where it is not unique. This freedom of different center manifolds allows us to choose in \Cref{ex:Cinftysmooth} a particular $2\pi$-periodic center manifold for \eqref{eq:Ex2} that is $C^{\infty}$-smooth. Hence, we have shown that analytic vector fields can admit non-analytic $C^{\infty}$-smooth periodic center manifolds. To complete this list of examples, we will show in \Cref{ex:nonCinfitysmooth} that the $C^{\infty}$-smooth (analytic) nonlinear periodically driven system
		\begin{equation} \label{eq:Exshrinking}
			\begin{dcases}
				\dot{x} = xz - x^3, \\
				\dot{y} = y + (1+\sin(t))x^2, \\
				\dot{z} = 0.
			\end{dcases}
		\end{equation}
		admits a $2\pi$-periodic non-$C^\infty$-smooth center manifold. This vector field is a modification of the vector field used in \cite{Strien1979} to illustrate the non-$C^\infty$-smoothness of center manifolds near equilibria. Hence, we have proven that there exists analytic vector fields admitting locally (non)-unique, (non)-$C^{\infty}$-smooth and (non)-analytic periodic center manifolds.

		\begin{example} \label{ex:analytic}
			The analytic system
			\begin{equation} \label{eq:Ex1}
				\begin{dcases}
					\dot{x}_1 = x_1 - x_2 - x_1(x_1^2 + x_2^2), \\
					\dot{x}_2 = x_1 + x_2 - x_2(x_1^2 + x_2^2), \\
					\dot{x}_3 = 0,
				\end{dcases}
			\end{equation}
			admits an analytic $2\pi$-periodic two-dimensional center manifold that is a cylinder.
		\end{example}
		\begin{proof}
			Notice that \eqref{eq:Ex1} admits a $2\pi$-periodic solution $\gamma(t) = (\cos(t),\sin(t),0)$. The system around $\Gamma = \gamma(\mathbb{R})$ can be written in coordinates  $x = \gamma + y$ as
			\begin{equation} \label{eq:Ex1ODE}
				\begin{dcases}
					\dot{y}_1 = -2\cos^2(t)y_1 -(1+2\sin(t)\cos(t))y_2 - 3\cos^2(t)y_1^2 -2 \sin(t)y_1y_2 - \cos(t)y_2^2 - y_1^3 -y_1y_2^2,\\
					\dot{y}_2 = (1-2\sin(t) \cos(t))y_1 -2 \sin^2(t)y_2 - \sin(t)y_1^2 -2\cos(t)y_1y_2 -3 \sin^2(t)y_2^2 -y_1^2y_2 - y_2^3, \\
					\dot{y}_3 = 0,   
				\end{dcases}
			\end{equation}   
			where $y:= (y_1,y_2,y_3)$ and $x := (x_1,x_2,x_3)$. The linearization around the origin of \eqref{eq:Ex1ODE} reads
			\begin{equation*}
				A(t) = 
				\begin{pmatrix}
					-2 \cos^2(t) & -2\sin(t)\cos(t)-1 & 0 \\
					-2 \sin(t)\cos(t) + 1 & -2\sin^2(t) & 0 \\
					0 & 0 & 0
				\end{pmatrix}
			\end{equation*}
			and so the solution of the variational equation around $\Gamma$ is generated by the fundamental matrix $U(t,s) = V(t)V(s)^{-1}$ where
			\begin{equation*}
				V(t) = 
				\begin{pmatrix}
					e^{-2t}\cos(t)  & - \sin(t) & 0 \\
					e^{-2t}\sin(t) & \cos(t)  & 0 \\
					0 & 0 & 1
				\end{pmatrix}.
			\end{equation*}
			The Floquet multipliers are given by $\lambda_1 = 1, \lambda_2 = e^{-4\pi}$ and $\lambda_3 = 1$. Hence, the center subspace and stable subspace (at time $t$) can be obtained as $E_0(t) = \spn \{\zeta_1(t),\zeta_3(t) \}$ and $E_{-}(t) = \spn \{\zeta_2(t) \}$ respectively, where $\zeta_1(t) = (-\sin(t),\cos(t),0), \zeta_2(t) = (\cos(t),\sin(t),0)$ and $\zeta_3(t) = (0,0,1)$. The center bundle $E_0$ parametrizes a cylinder as a ruled surface since
			\begin{equation*}
				(x_1(t,v),x_2(t,v),x_3(t,v)) = \gamma(t) + v \zeta_3(t),
			\end{equation*}
			for all $t,v \in \mathbb{R}$. It follows from \Cref{sec:properties} that for any $k \geq 1$ there exists a $2\pi$-periodic $C^k$-smooth two-dimensional locally invariant center manifold $\mathcal{W}_{\loc}^c$ for \eqref{eq:Ex1ODE} around the origin that is tangent to $E_0$. To obtain this center manifold, let us transform \eqref{eq:Ex1ODE} into eigenbasis, i.e. we perform the change of variables from $y$ to $z := (z_1,z_2,z_3)$ as
			\begin{align} \label{eq:Ex1variables}
				z_1 &= -\sin(t) y_1 + \cos(t)y_2, \nonumber\\
				z_2 &= \cos(t)y_1 + \sin(t)y_2, \\
				z_3 &= y_3 \nonumber,
			\end{align}
			to obtain the autonomous system
			\begin{equation} \label{eq:Ex1autono}
				\begin{dcases}
					\dot{z}_1 = -z_1(z_1^2 +z_2^2 + 2z_2), \\
					\dot{z}_2 = -(z_2+1)(z_1^2 +z_2^2 + 2z_2), \\
					\dot{z}_3 = 0.
				\end{dcases}
			\end{equation}
			The $z_1z_3$-plane corresponds to the center subspace while the $z_2$-axis corresponds to the stable subspace. Therefore, the center manifold is parametrized by $z_2(t) = \mathcal{H}(t,z_1,z_3)$, where $\mathcal{H}$ is $2\pi$-periodic in the first variable and consists solely of nonlinear terms in the last two variables. Because \eqref{eq:Ex1autono} is an autonomous system, we have that $\mathcal{H}$ is constant in the first variable, and so we can write $\mathcal{H}(t,z_1,z_3) = H(z_1,z_3)$ for all $t \in \mathbb{R}$. Because $\mathcal{W}_{\loc}^c$ is locally invariant, we must have that 
			\begin{align*}
				z_1(z_1^2 + H(z_1,z_3)^2 + 2H(z_1,z_3))\frac{\partial}{\partial z_1}H(z_1,z_3) 
				= (H(z_1,z_3)+1)(z_1^2 +H(z_1,z_3)^2 + 2H(z_1,z_3)).
			\end{align*}
			If $z_1^2 + H(z_1,z_3)^2 + 2H(z_1,z_3) \neq 0$, then $H$ must satisfy
			\begin{equation*}
				\frac{\partial}{\partial z_1}H(z_1,z_3) = \frac{1}{z_1}(H(z_1,z_3)+1), \quad H(0,0) = 0,
			\end{equation*}
			which has obviously no solution. Hence $z_1^2 + (H(z_1,z_3)+1)^2 = 1$ and so $H(z_1,z_3) = \sqrt{1-z_1^2} -1$ since $H(0,0) = 0$. Clearly $H$ is analytic on $(-1,1) \times \mathbb{R}$ since
			\begin{equation*}
				H(z_1,z_3) = \sum_{k=1}^\infty (-1)^k \binom{\frac{1}{2}}{k} z_1^{2k},
			\end{equation*}
			for all $(z_1,z_3) \in (-1,1) \times \mathbb{R}$ due to the Binomial series. Hence, $\mathcal{H}$ is analytic on $\mathbb{R} \times (-1,1) \times \mathbb{R}$, which proves the claim. Transforming the map $\mathcal{H}$ back into original $y$-coordinates using \eqref{eq:Ex1variables} shows that the center manifold $\mathcal{W}_{\loc}^c$ is parametrized as
			\begin{align} \label{eq:Ex1inveq}
				(y_1(t) + \cos(t))^2 &+ (y_2(t)+\sin(t))^2 = 1, \quad
				y_3(t) = c_3, \quad c_3 \in \mathbb{R}.
			\end{align}
			Writing this back into $x$-coordinates yields
			\begin{align*}
				(x_1(t),x_2(t),x_3(t))
				= (y_1(t) + \cos(t), y_2(t)+ \sin(t), c_3), \quad c_3 \in \mathbb{R},
			\end{align*}
			but due to \eqref{eq:Ex1inveq} we have that $x_1(t)^2 + x_2(t)^2 = 1$ and so
			\begin{equation*}
				\mathcal{W}_{\loc}^c(\Gamma) = \{(x_1,x_2,x_3) \in \mathbb{R}^3 : x_1^2 + x_2^2 = 1 \}.
			\end{equation*}
			Notice that $\mathcal{W}_{\loc}^c$ and $\mathcal{W}_{\loc}^c(\Gamma)$ do not depend on a choice of $\delta > 0$. The reason is clear as for example the cylinder $\mathcal{W}_{\loc}^c(\Gamma)$ is an invariant manifold of \eqref{eq:Ex1} since the function $V : \mathbb{R}^3 \to \mathbb{R}$ defined by
			\begin{equation*}
				V(x_1,x_2,x_3) := x_1^2 + x_2^2 - 1 
			\end{equation*}
			is constant along the trajectories whose points are contained in $\mathcal{W}_{\loc}^c(\Gamma) = V^{-1} (\{0\})$.
		\end{proof}
		
		\begin{example} \label{ex:mobius}
			The analytic system
			\begin{equation} \label{eq:Exmobius}
				\begin{dcases}
					\dot{x}_1 = - x_2 + x_1 \Phi(x_1,x_2), \\
					\dot{x}_2 = x_1 + x_2 \Phi(x_1,x_2),\\
					\dot{x}_3 = \frac{1}{4}(1-\sigma x_2)(x_1^2 + x_2^2 - 1) + \frac{\sigma x_3}{2}(1+x_1),
				\end{dcases}
			\end{equation}
			where 
			\[
			\Phi(x_1,x_2) := \frac{\sigma}{4}(1-x_1)(x_1^2 + x_2^2 - 1) - \frac{x_3}{2}(1+\sigma x_2),
			\]
			admits for $\sigma \neq 0$ a $2 \pi$-periodic two-dimensional $C^k$-smooth center manifold that is locally diffeomorphic to a Möbius band for every $k \geq 1$. If $\sigma = 0$, then the $2\pi$-periodic center manifold is the whole state space $\mathbb{R}^3$ and \eqref{eq:Exmobius} admits a family of invariant tori.
		\end{example}

		\begin{proof}
			Notice that \eqref{eq:Ex1} admits for all $\sigma \in \mathbb{R}$ a $2\pi$-periodic solution $\gamma_\sigma(t) = (\cos(t),\sin(t),0)$. Hence, the solution of the variational equation around $\Gamma_\sigma$ is generated by the fundamental matrix $U_\sigma(t,s) = V_\sigma(t)V_\sigma(s)^{-1}$ where
			\begin{equation*}
				V_\sigma(t) = 
				\begin{pmatrix}
					\cos(t) \cos(\frac{t}{2}) & - \sin(t) & -e^{ \sigma t}\sin(\frac{t}{2})\cos(t) \\[1ex]
					\sin(t)\cos(\frac{t}{2}) & \cos(t) & -e^{ \sigma t}\sin(\frac{t}{2})\sin(t) \\[1ex]
					\sin(\frac{t}{2}) & 0 & e^{\sigma t} \cos(\frac{t}{2})     
				\end{pmatrix}.
			\end{equation*}
			The Floquet multipliers are given by $\lambda_{1} = 1, \lambda_{2} = -1$ and $\lambda_{3,\sigma} = -e^{2 \pi \sigma }$. Let $E_0^\sigma(t)$ and $E_{\pm}^\sigma(t)$ denote the center and (un)stable subspace (at time $t$) at parameter value $\sigma$ respectively. For the center subspace, we have that $E_{0}^\sigma(t) = \spn \{ \zeta_1(t),\zeta_2(t) \}$ for $\sigma \neq 0$ and $E_{0}^0(t) = \spn \{ \zeta_1(t),\zeta_2(t),\zeta_3(t) \}$. For the (un)stable subspace we obtain $E_{-}^{\sigma}(t) = \spn \{ \zeta_3(t) \}$ for $\sigma < 0$ and $E_{+}^{\sigma}(t) = \spn \{ \zeta_3(t) \}$ for $\sigma > 0$ where
			\begin{align*}
				\zeta_1(t) &= (-\sin(t),\cos(t),0), \\
				\zeta_2(t) &= \bigg( \cos(t)\cos(\frac{t}{2}), \sin(t)\cos(\frac{t}{2}), \sin(\frac{t}{2}) \bigg), \\
				\zeta_3(t) &= \bigg(-\cos(t)\sin(\frac{t}{2}),-\sin(t)\sin(\frac{t}{2}), \cos(\frac{t}{2}) \bigg).
			\end{align*}
			Notice that the eigenvector $\zeta_3(t)$ is perpendicular to the plane spanned by $\zeta_1(t)$ and $\zeta_2(t)$. Let us first discuss the case $\sigma \neq 0$. Observe that the center bundle $E_{0}^\sigma$ at parameter value $\sigma$ parametrizes locally a M\"obius band as a ruled surface since
			\begin{equation*}
				(x_1(t,v),x_2(t,v),x_3(t,v)) = \gamma_\sigma(t) + v \zeta_2(t),
			\end{equation*}
			for all $t \in \mathbb{R}$ and $v \in [-1,1]$. \Cref{thm:main} provides us for any $k\geq 1$ a $2\pi$-periodic $C^k$-smooth two-dimensional locally invariant center manifold $\mathcal{W}_{\loc}^c(\Gamma_\sigma)$ for \eqref{eq:Exmobius} around $\Gamma_\sigma$ tangent to the center bundle $E_{0}^\sigma$ that is locally diffeomorphic to a M\"obius band, see \Cref{fig:cmmobius}. 
			
			When $\sigma = 0$, it is clear that the Floquet multipliers are all on the unit circle where $1$ is simple and $-1$ has algebraic multiplicity $2$. Hence, the $2\pi$-periodic center manifold is $3$-dimensional, i.e. the whole state space $\mathbb{R}^3$. Moreover, \eqref{eq:Exmobius} admits a family of invariant tori $\{\mathbb{T}_l : l \geq 0 \}$ at $\sigma = 0$ with major radius $1$ and minor radius $r_l$ since the function $V_{l} : \mathbb{R}^3 \to \mathbb{R}$ defined by
			\begin{align*}
				V_{l}(x_1,x_2,x_3) := \bigg(\sqrt{x_1^2 + x_2^2} - 1\bigg)^2 + x_3^2 - r_l^2\bigg(\sqrt{x_1^2 + x_2^2}\bigg)
			\end{align*}
			where
			\begin{equation*}
				r_l^2(u) := l + \frac{1}{2}u(u-4)+\ln(u) + \frac{3}{2},
			\end{equation*}
			is constant along the trajectories whose points are contained in $\mathbb{T}_l := V_{l}^{-1}(\{0\})$. We claim that the family of tori is rooted at the cycle, i.e. $\mathbb{T}_0 = \Gamma_0$. It is clear that solving $V_0(x_1,x_2,x_3) = 0$ is equivalent to
			\begin{equation*}
				\frac{1}{2}u^2 - \ln(u) = \frac{1}{2} - x_3^2, \quad u = \sqrt{x_1^2 + x_2^2},
			\end{equation*}
			which has only one real solution at $x_3 = 0$ since $\frac{1}{2}u^2 - \ln(u) \geq \frac{1}{2}$ for all $u \geq 0$. Clearly this solution corresponds to the cycle $\Gamma_0$. Examples of such invariant tori can be found in \Cref{fig:cmmobius}.
		\end{proof}
		
		\begin{figure}[ht]
			\centering
			\includegraphics[width = 14cm]{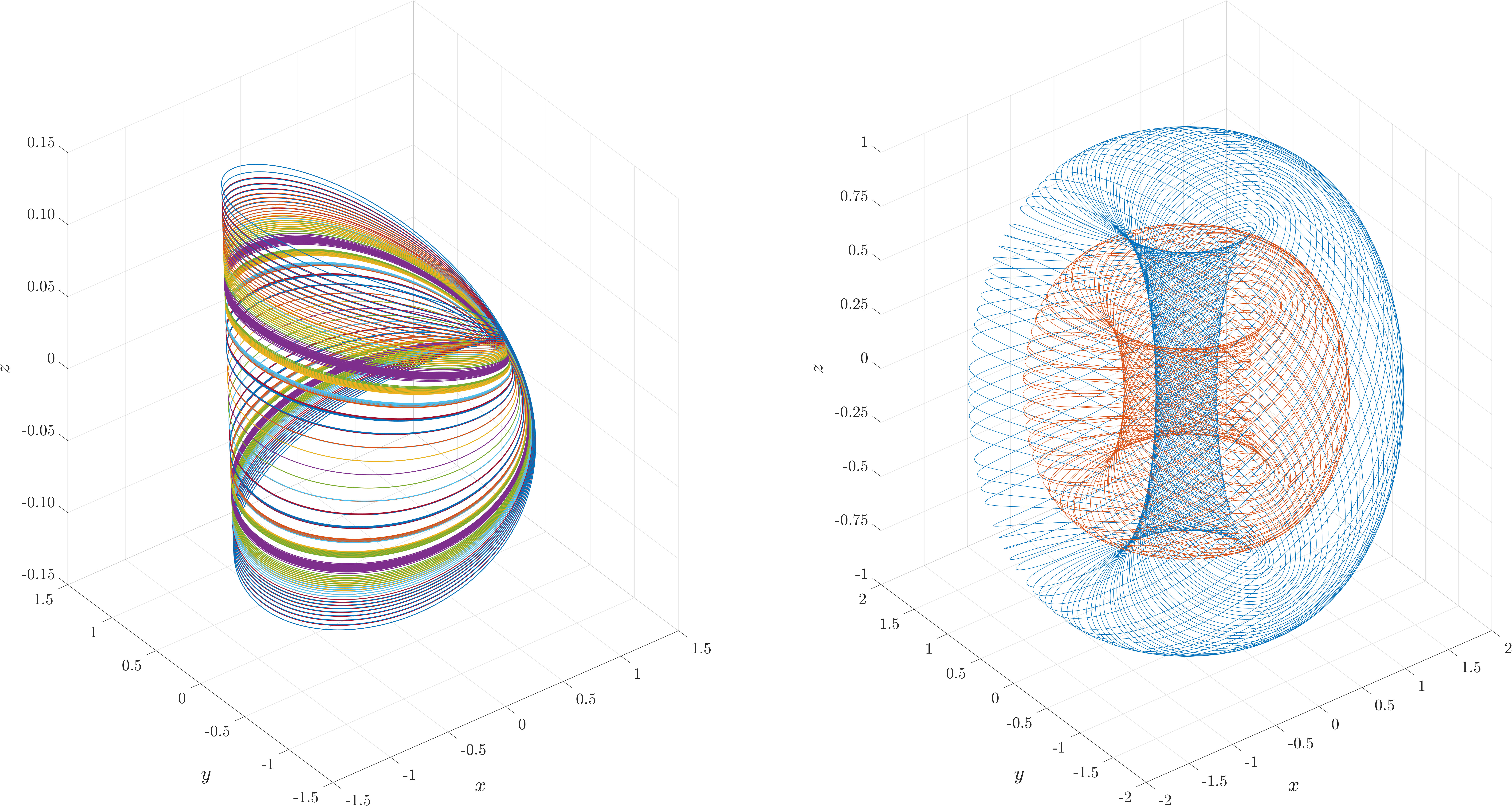}
			\caption{The left figure represents several forward orbits on a local $2\pi$-periodic two-dimensional center manifold (M\"obius band) around $\Gamma_{\sigma}$ for \eqref{eq:Exmobius} at parameter value $\sigma = -1$. The right figure represents two forward orbits on two different invariant tori for \eqref{eq:Exmobius} at parameter value $\sigma = 0$. The forward orbits are obtained by numerical integration and each orbit is represented by different color.}
			\label{fig:cmmobius}
		\end{figure}

		\begin{example} \label{ex:nonanalytic}
			The analytic system \eqref{eq:Ex2} admits a non-analytic $2\pi$-periodic center manifold.
		\end{example}
		\begin{proof}
			The fundamental matrix for the linearization around the origin of \eqref{eq:Ex2} reads 
			\begin{equation*}
				U(t,s) =
				\begin{pmatrix}
					1 & 0 \\
					0 & e^{s-t}\\
				\end{pmatrix}
			\end{equation*}
			for all $(t,s) \in \mathbb{R}^2$. The Floquet multipliers are given by $\lambda_1 = 1$ and $\lambda_2 = e^{-2\pi}$ and the center space (at time $t$) is given by $E_0(t) = \spn \{(1,0)\}$ while the stable space (at time $t$) is given by $E_{-}(t) = \spn \{(0,1)\}$. Hence, the $x$-axis corresponds to the center space and so the center manifold can be parametrized by $y(t) = \mathcal{H}(t,x)$, where $\mathcal{H}$ is $2\pi$-periodic in the first argument and consists solely of nonlinear terms. Because the center manifold is locally invariant, the map $\mathcal{H}$ must satisfy
			\begin{equation} \label{eq:Ex2invariance}
				\frac{\partial \mathcal{H}}{\partial t}(t, x) - x^2 \frac{\partial \mathcal{H}}{\partial x}(t, x) = -\mathcal{H}(t, x) + \sin(t)x^2.
			\end{equation}
			Assume that $\mathcal{H}$ is analytic on an open neighborhood of $\mathbb{R} \times \{ 0 \}$, then we can write locally $\mathcal{H}(t,x) = \sum_{n \geq 2} a_n(t)x^n$ for $2\pi$-periodic functions $a_n$. Filling this expansion into \eqref{eq:Ex2invariance} and comparing terms in $x^n$ shows that the $2\pi$-periodic functions $a_n$ must satisfy
			\begin{equation*}
				\begin{dcases}
					\dot{a}_2(t) + a_2(t) = \sin(t), \quad &n = 2, \\
					\dot{a}_n(t) + a_n(t) = (n-1)a_{n-1}(t), \quad &n \geq 3.
				\end{dcases}
			\end{equation*}
			Hence, $a_2(t) =  \alpha_2 \sin(t) + \beta_2 \cos(t)$, where $\alpha_2 = \frac{1}{2}$ and $\beta_2 = -\frac{1}{2},$ and
			\begin{align*}
				a_n(t) = \frac{(n-1)e^{-t}}{e^{2\pi}-1} \bigg( \int_t^{2\pi} e^{\tau} a_{n-1}(\tau) d\tau
				+ e^{2\pi} \int_0^t e^{\tau} a_{n-1}(\tau) d\tau \bigg).
			\end{align*}
			Let us prove by induction for $n \geq 2$ that $a_n$ is a linear combination of sines and cosines. If $n = 2$, then the result is clear. Assume that the claim holds for a certain $n \geq 3$, it follows from the induction hypothesis and applying integration by parts twice on both integrals that
			\begin{align*}
				a_{n}(t)&= \frac{(n-1) e^{-t}}{e^{2\pi}-1} \bigg( \int_t^{2\pi} e^{\tau} (\alpha_{n-1} \sin(\tau) 
				+ \beta_{n-1} \cos(\tau)) d\tau + e^{2\pi} \int_0^t e^{\tau} (\alpha_{n-1} \sin(\tau)
				+ \beta_{n-1} \cos(\tau)) d\tau \bigg) \\
				&=  \frac{n-1}{2}((\alpha_{n-1} + \beta_{n-1}) \sin(t) 
				+ (-\alpha_{n-1} + \beta_{n-1})\cos(t)),
			\end{align*}  
			which proves the claim. From the proof of the induction step, we obtain
			\begin{equation*}
				\begin{pmatrix}
					\alpha_n \\
					\beta_n
				\end{pmatrix}
				= \frac{n-1}{2}
				\begin{pmatrix}
					1 & 1 \\
					-1 & 1
				\end{pmatrix}
				\begin{pmatrix}
					\alpha_{n-1} \\
					\beta_{n-1}
				\end{pmatrix},
				\quad n \geq 3.
			\end{equation*}
			This is a linear system of difference equation and can be solved explicitly by computing the diagonalization of the associated matrix. The final result reads
			\begin{equation*}
				\begin{pmatrix}
					\alpha_n \\
					\beta_n
				\end{pmatrix}
				= \frac{(n-1)!}{2^{\frac{n-1}{2}}}
				\begin{pmatrix}
					\cos(\frac{(n-1)\pi}{4}) \\
					- \sin(\frac{(n-1)\pi}{4})
				\end{pmatrix}.
			\end{equation*}
			Hence, the $2\pi$-periodic functions $a_n$ are given by
			\begin{align*}
				a_n(t) = \frac{(n-1)!}{2^{\frac{{n-1}}{2}}}\bigg(\cos(\frac{(n-1)\pi}{4})\sin(t)
				- \sin(\frac{(n-1)\pi}{4}) \cos(t) \bigg), \quad n \geq 2.
			\end{align*}
			Using the angle addition and subtractions formula for cosines yields the center manifold expansion
			\begin{equation} \label{eq:Ex2H}
				\mathcal{H}(t,x) = \sum_{n \geq 2} \frac{(n-1)!}{2^{\frac{{n-1}}{2}}} \sin(t - \frac{(n-1)\pi}{4}) x^n.
			\end{equation}
			We will prove that the radius of convergence $R(t)$ at time $t \in \mathbb{R}$ of \eqref{eq:Ex2H} is zero. Let us first observe that for any  $n \geq 2$ one has
			\begin{equation*}
				\sup_{k \geq n} \bigg(k!\bigg|\sin(t-\frac{k \pi}{4}) \bigg| \bigg)^{\frac{1}{k}} \geq ((4n)! | \sin(t) |)^{\frac{1}{4n}},
			\end{equation*}
			for $t \neq l \pi$ and $\ l \in \mathbb{Z}$. To bound this supremum from below when $t = l \pi$ for some $l \in \mathbb{Z}$, choose $m \in \mathbb{Z}$ such that $r = 2(l-1-m) \geq n$ because then
			\begin{equation*}
				\sup_{k \geq n} \bigg(k!\bigg|\sin(l\pi-\frac{k \pi}{4}) \bigg| \bigg)^{\frac{1}{k}} \geq (r!)^{\frac{1}{r}} \geq (n!)^{\frac{1}{n}}.
			\end{equation*}
			The Cauchy-Hadamard theorem tells us that
			\begin{align*}
				\frac{1}{R(t)} = \limsup\limits_{n \to \infty}|a_n(t)|^{\frac{1}{n}} 
				\geq \sqrt{2} \min\{ \lim_{n \to \infty} (n! | \sin(t) |)^{\frac{1}{n}}, \lim_{n \to \infty} (n!)^{\frac{1}{n}} \} 
				= \infty,
			\end{align*}
			where in the first argument of the minimum it is assumed that $t \neq l \pi$ for all $l \in \mathbb{Z}$. This proves $R(t) = 0$ for all $t \in \mathbb{R}$, i.e. $\mathcal{H}$ is not analytic.
		\end{proof}
		
		\begin{example} \label{ex:nonunique}
			The analytic system \eqref{eq:Ex2} admits a locally (non)-unique family of $2\pi$-periodic center manifolds.    
		\end{example}
		\begin{proof}
			Recall from \Cref{ex:nonanalytic} that the parametrization $\mathcal{H}$ of the center manifold must satisfy \eqref{eq:Ex2invariance} in an open neighborhood of $\mathbb{R} \times \{0\}$. To construct the map $\mathcal{H}$ explicitly, let us first introduce (formally) for arbitrary constants $\alpha,\beta \in \mathbb{R}$ the family of functions $I_{\alpha,\beta} : \mathbb{R} \times \mathbb{R} \setminus \{0\} \to \mathbb{R}$ as
			\begin{align} \label{eq:Ialphabeta}
				I_{\alpha,\beta}(t,x) := - \sqrt{2} e^{-\frac{1}{x}}\bigg( \cos(\frac{1}{x}-t) I_{\alpha}^1(x)  
				+ \sin(\frac{1}{x}-t) I_\beta^2(x)  \bigg),
			\end{align}
			where the functions $I_\alpha^1$ and $I_\beta^2$ are defined by
			\begin{align} \label{eq:Ialphabeta1}
				I_\alpha^1(x) := \int_{\alpha}^x \frac{e^{\frac{1}{s}}}{s}\sin(\frac{1}{s} + \frac{\pi}{4})ds, \quad
				I_\beta^2(x) := \int_{\beta}^x \frac{e^{\frac{1}{s}}}{s}\sin(\frac{1}{s} - \frac{\pi}{4}) ds.
			\end{align}
			It turns out that the higher order derivatives of $I_{\alpha,\beta}(t,\cdot)$ will be important for the construction of the map $\mathcal{H}$. Therefore, let us first determine all values of $\alpha$ and $\beta$ for which \eqref{eq:Ialphabeta} is well-defined on $\mathbb{R} \times (-\infty,0)$. Clearly $I_{\alpha,\beta}$ is ill-defined on $\mathbb{R} \times (-\infty,0)$ whenever $\alpha,\beta > 0$ due to the singularities at zero for the functions defined in \eqref{eq:Ialphabeta1}. Hence, we must have that $\alpha,\beta \leq 0$. We will show that $\alpha = \beta = 0$ are the only values for which $I_{\alpha,\beta}$ is well-defined on $\mathbb{R} \times (-\infty,0)$. Notice that
			\begin{equation} \label{eq:limI0}
				\bigg|e^{-\frac{1}{x}} \cos(\frac{1}{x}-t)I_0^1(x) \bigg| \leq e^{-\frac{1}{x}} \int_x^0 \frac{e^{\frac{1}{s}}}{s} ds \to 0,
			\end{equation}
			as $x \uparrow 0$ by an application of L'H\^opital's rule. A similar computation for the second term in \eqref{eq:Ialphabeta} shows that $I_{0,0}$ is well-defined. To show that the function $I_{\alpha,\beta}$ is ill-defined on $\mathbb{R} \times (-\infty,0)$ for all $\alpha,\beta < 0$, consider for a fixed $t \in \mathbb{R}$ the sequence $(x_m)_{m \geq m_0}$ defined by $x_m := \frac{1}{t - m \pi}$, where the integer $m_0 \geq 0$ is chosen large enough to guarantee that $t - m_0 \pi < 0$. Hence,
			\begin{align} \label{eq:xmintegral}
				e^{-\frac{1}{x_m}} \cos(\frac{1}{x_m}-t)I_\alpha^1(x_m) =  
				(-1)^m e^{-\frac{1}{x_m}} \int_\alpha^{x_m} \frac{e^{\frac{1}{s}}}{s} \sin(\frac{1}{s} + \frac{\pi}{4}) ds.
			\end{align}
			Because the integrand is continuous and bounded above on $[\alpha,x_m] \subset [\alpha,0)$ for large enough $m \geq m_0$, it can be extended from the left continuously at zero such that it attains the value $M_{\alpha} < \infty$. If we set the integral in \eqref{eq:xmintegral} to be $M_\alpha^m$, then $M_\alpha^m \to M_\alpha$ when $m \to \infty$. Hence,
			\begin{equation*}
				e^{-\frac{1}{x_m}} \cos(\frac{1}{x_m}-t)I_\alpha^1(x_m) = e^{-\frac{1}{x_m}} (-1)^m M_\alpha^m
			\end{equation*}
			which is undetermined when $m \to \infty$. A similar reasoning shows that the second term in \eqref{eq:Ialphabeta} is ill-defined on $\mathbb{R} \times (-\infty,0)$ when $\beta < 0$ and so $I_{\alpha,\beta}$ is only well-defined on $\mathbb{R} \times (-\infty,0)$ whenever $\alpha = \beta = 0$. It can be proven similarly as in \eqref{eq:limI0} that $I_{\alpha,\beta}$ is well-defined on $\mathbb{R} \times (0,\infty)$ and that $\lim_{x \downarrow 0} I_{\alpha,\beta}(\cdot,x) = 0$ for all $\alpha,\beta > 0$.
			
			Our next goal is to determine the higher order partial derivatives of the second component of $I_{\alpha,\beta}$ evaluated at zero. A straightforward computation shows already that
			\begin{equation} \label{eq:partialIab}
				\frac{\partial}{\partial x} I_{\alpha,\beta}(t,x) = \frac{\sqrt{2}}{x^2} \bigg(I_{\alpha,\beta} \bigg(t+\frac{\pi}{4},x \bigg) - x \sin(t+\frac{\pi}{4}) \bigg).
			\end{equation}
			Let us write $I_{\alpha,\beta}(t,x) = \sum_{n=0}^N b_n(t)x^n + R_N(t,x)$ as a Taylor polynomial where $R_N$ is the remainder for some $N \in \mathbb{N}$. Filling in this Taylor polynomial into \eqref{eq:partialIab}, we see that $b_0$ is the zero function, $b_1(t) = \sin(t)$ and $b_{n}(t) = \frac{n-1}{\sqrt{2}}b_{n-1}(t-\frac{\pi}{4})$ for all $n=2,\dots,N$. This recurrence relation shows that
			\begin{align} \label{eq:partialsIab}
				\frac{\partial^n}{\partial x^n} I_{\alpha,\beta}(t,0) = n! b_n(t) = n!\frac{(n-1)!}{2^{\frac{n-1}{2}}} \sin(t-\frac{(n-1)\pi}{4}),
			\end{align}
			for all $n=1,\dots,N$, where $N$ can be taken arbitrary large. 
			
			The construction of the map $\mathcal{H}$ will consists of two parts, namely $x \in (-\infty,0]$ and $x \in [0,\infty)$. For the first part, let $\phi : \mathbb{R} \to \mathbb{R}$ be any $2\pi$-periodic differentiable function and observe that the map $\mathcal{H}_{-} : \mathbb{R} \times (-\infty,0] \to \mathbb{R}$ defined by 
			\begin{equation*}
				\mathcal{H}_{-}(t,x)
				:=
				\begin{dcases}
					e^{-\frac{1}{x}} \phi \bigg(\frac{1}{x}-t \bigg) + I_{0,0}(t,x) - \sin(t)x, \quad &t \in \mathbb{R}, \ x \in (-\infty, 0),\\
					0, \quad & t \in \mathbb{R}, \ x = 0,
				\end{dcases}
			\end{equation*}
			satisfies the local invariance equation \eqref{eq:Ex2invariance}. However, for $\mathcal{H}_{-}$ to be a parametrization of a center manifold on $\mathbb{R} \times (-\infty,0]$, we must have that $\lim_{x \uparrow 0} \mathcal{H}_{-}(\cdot,x) = 0$. Since $\lim_{x \uparrow 0} I_{0,0}(\cdot,x)$, it is clear that
			\begin{equation*}
				e^{-\frac{1}{x}} \phi \bigg(\frac{1}{x}-t \bigg) \to 0, \quad x \uparrow 0.
			\end{equation*}
			We claim $\phi$ must be the zero function. Consider for fixed $t \in \mathbb{R}$ and $r \in \mathbb{Q}$ the sequence $(y_{m})_{m \geq m_1}$ defined by $y_m := \frac{1}{t-r-2m\pi}$ where the integer $m_1 \geq 0$ is chosen large enough to guarantee that $t-r-2m_1\pi < 0$. The $2\pi$-periodicity of $\phi$ implies that
			\begin{equation*}
				e^{-\frac{1}{y_m}} \phi \bigg(\frac{1}{y_m}-t \bigg) = \phi(r) e^{-\frac{1}{y_m}} \to \infty, \quad m \to \infty,
			\end{equation*}
			unless $\phi(r) = 0$. As $r \in \mathbb{Q}$ is arbitrary we have that $\phi$ is the zero function on $\mathbb{Q}$ and because $\phi$ is (at least) continuous, we have that $\phi$ is the zero function on $\mathbb{R}$. Moreover, we obtain from \eqref{eq:partialsIab} directly that $\lim_{x \uparrow 0} \frac{\partial}{\partial x} \mathcal{H}_{-}(\cdot,x) = 0$ and so $\mathcal{H}_{-}$ is indeed a parametrization of a center manifold on $ \mathbb{R} \times (-\infty,0]$. In addition, it follows directly from \eqref{eq:partialIab} that $\mathcal{H}_{-}$ is $C^{\infty}$-smooth on $\mathbb{R} \times (-\infty,0]$.
			
			For the second part, let $\phi : \mathbb{R} \to \mathbb{R}$ be any $2\pi$-periodic $C^k$-smooth function and observe that for any $\alpha,\beta > 0$ and sufficiently small $\delta_k > 0$ the map $\mathcal{H}_{+,\phi}^{\alpha,\beta} : \mathbb{R} \times [0,\delta_k) \to \mathbb{R}$ defined by
			\begin{equation*}
				\mathcal{H}_{+,\phi}^{\alpha,\beta}(t,x)
				:=
				\begin{dcases}
					e^{-\frac{1}{x}} \phi \bigg(\frac{1}{x}-t \bigg) + I_{\alpha,\beta}(t,x) - \sin(t)x, \quad &t \in \mathbb{R}, \ x \in (0, \delta_k),\\
					0, \quad & t \in \mathbb{R}, \ x = 0,
				\end{dcases}
			\end{equation*}
			satisfies the local invariance equation \eqref{eq:Ex2invariance}. Since $\phi$ is $C^k$-smooth and $2\pi$-periodic, we have for any $l =0,\dots,k$ that its $l$th derivative is bounded above by some real number $0 < M_l < \infty$. Hence,
			\begin{equation*}
				\bigg| e^{-\frac{1}{x}} \phi \bigg(\frac{1}{x}-t \bigg) \bigg| \leq M_{0} e^{-\frac{1}{x}} \to 0, \quad x \downarrow 0,
			\end{equation*}
			which already proves that $\lim_{x \downarrow 0} \mathcal{H}_{+,\phi}^{\alpha,\beta}(\cdot,x) = 0$. To prove that $\mathcal{H}_{+,\phi}^{\alpha,\beta}$ is tangent to the center bundle and $C^k$-smooth (at the origin), note for any $l=0,\dots,k$ that
			\begin{equation*}
				\bigg| \frac{d^l}{dx^l} \bigg[e^{-\frac{1}{x}} \phi \bigg(\frac{1}{x}-t \bigg) \bigg] \bigg| \leq e^{-\frac{1}{x}}\sum_{q=0}^l p_q \bigg(\frac{1}{|x|} \bigg) \to 0,
			\end{equation*}
			as $x \downarrow 0$ due to the general Leibniz rule, Fa\`{a} di Bruno's formula and the fact that $p_q$, dependent on $M_0,\dots,M_q$, is a polynomial for all $q=0,\dots,l$. Hence, $\mathcal{H}_{+,\phi}^{\alpha,\beta}$ is a $C^k$-smooth function on $\mathbb{R} \times [0,\delta_k)$ for some $\delta_k > 0$. As a consequence of the results derived above, the map $\mathcal{H}_{\phi}^{\alpha,\beta} : \mathbb{R} \times (-\infty,\delta_k)$ defined by
			\begin{align} \label{eq:Habphi}
				\mathcal{H}_{\phi}^{\alpha,\beta}(t,x) := 
				\begin{dcases}
					\mathcal{H}_{-}(t,x), &t \in \mathbb{R}, \ x \in (-\infty, 0],\\
					\mathcal{H}_{+,\phi}^{\alpha,\beta}(t,x), &t \in \mathbb{R}, \ x \in [0,\delta_k),\\
				\end{dcases}
			\end{align}
			parametrizes a locally (non)-unique family of $2\pi$-periodic $C^k$-smooth center manifolds around $\mathbb{R} \times \{0\}$ of \eqref{eq:Ex2}. Two different $2\pi$-periodic center manifolds for \eqref{eq:Ex2} are visualized in \Cref{fig:cmexample}.
		\end{proof}
		
		\begin{figure}[ht]
			\centering
			\includegraphics[width = 14cm]{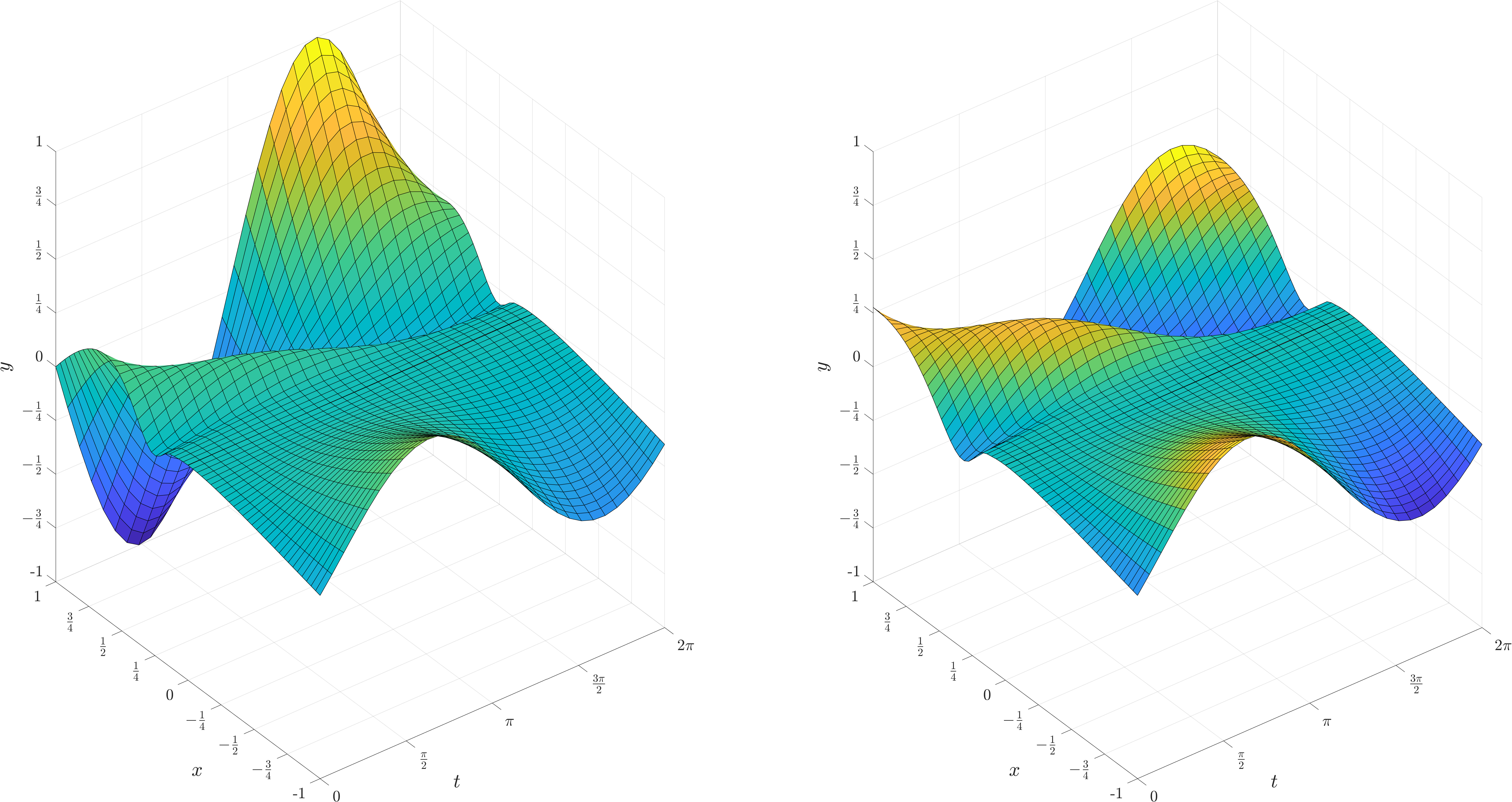}
			\caption{Two different $C^{\infty}$-smooth non-analytic $2\pi$-periodic center manifolds around $\mathbb{R} \times \{0\}$ for the analytic system \eqref{eq:Ex2} parametrized by the maps $\mathcal{H}_{0}^{1,1}$ (left) and $\mathcal{H}_{0}^{2,3}$ (right), respectively.}
			\label{fig:cmexample}
		\end{figure}
		
		\begin{example} \label{ex:Cinftysmooth}
			The analytic system \eqref{eq:Ex2} admits a $2\pi$-periodic $C^{\infty}$-smooth center manifold.
		\end{example}
		\begin{proof}
			The map $\mathcal{H}_{0}^{1,1}$ from \eqref{eq:Habphi} provides us a $2\pi$-periodic  $C^{\infty}$-smooth center manifold for \eqref{eq:Ex2} on $(-\infty,\delta_\infty)$ with $\delta_\infty > 0$ since $\mathcal{H}_{0}^{1,1}$ is $C^{\infty}$-smooth in an open neighborhood of $\mathbb{R} \times \{0\}$.
		\end{proof}
		
		\begin{example} \label{ex:nonCinfitysmooth}
			The analytic system \eqref{eq:Exshrinking}
			admits for any $k \geq 0$ a $2\pi$-periodic $C^k$-smooth center manifold, but not a $2\pi$-periodic $C^{\infty}$-smooth center manifold.
			\begin{proof}
				The fundamental matrix for the linearization around the origin of \eqref{eq:Exshrinking} reads 
				\begin{equation*}
					U(t,s) =
					\begin{pmatrix}
						1 & 0 & 0 \\
						0 & e^{t-s} & 0\\
						0 & 0 & 1
					\end{pmatrix}
				\end{equation*}
				for all $(t,s) \in \mathbb{R}^2$. The Floquet multipliers are given by $\lambda_1 = 1, \lambda_2 = e^{2\pi}$ and $\lambda_3 = 1$. The center space (at time $t$) is given by $E_0(t) = \spn \{(1,0,0),(0,0,1)\}$ while the unstable space (at time $t$) is given by $E_{+}(t) = \spn \{(0,1,0)\}$. Hence, the $xz$-plane corresponds to the center space and so the center manifold can be parametrized by $y(t) = \mathcal{H}(t,x,z)$, where $\mathcal{H}$ is $2\pi$-periodic in the first argument and only consists of nonlinear terms. It follows from \Cref{thm:main} that there exists for any $k \geq 1$ an open neighborhood $U_{2k}$ of $\mathbb{R} \times \{0\} \times \{0\}$ such that the map $\mathcal{H}$ is $C^{2k}$-smooth on $U_{2k}$. Hence, one can write
				\begin{equation} \label{eq:Hexpansion}
					\mathcal{H}(t,x,z) = \sum_{n=2}^{2k} a_n(t,z)x^{n} + \mathcal{O}(x^{2k+1})
				\end{equation}
				in the neighborhood $U_{2k}$. Because the center manifold is locally invariant, the map $\mathcal{H}$ must satisfy
				\begin{align} \label{eq:Ex3inveq}
					\frac{\partial \mathcal{H}}{\partial t}(t,x,z) + x(z-x^2) \frac{\partial \mathcal{H}}{\partial x}(t,x,z) = \mathcal{H}(t,x,z) + (1+\sin(t))x^2
				\end{align}
				Substituting \eqref{eq:Hexpansion} into \eqref{eq:Ex3inveq} and comparing terms in $x^n$ for $n = 2,\dots,2k$ shows that the functions $a_n$, which are $2\pi$-periodic in the first component, must satisfy
				\begin{equation} \label{eq:a2an}
					\begin{dcases}
						\frac{\partial a_2}{\partial t}(t,z) + (2z - 1) a_2(t,z) = 1 + \sin(t), \quad &n = 2, \\
						\frac{\partial a_n}{\partial t}(t,z) + (nz - 1) a_n(t,z) = (n-2)a_{n-2}(t,z), \quad &n = 3,\dots,2k.
					\end{dcases}
				\end{equation}    
				
				Because $\mathcal{H}$ consists only of nonlinear terms, we can assume that $a_1 = 0$. Solving the $2\pi$-periodic boundary value problem \eqref{eq:a2an} for $n=2$ yields
				\begin{align} \label{eq:a2}
					a_2(t,z) = \frac{1}{2z - 1} + \frac{2z-1}{(2z-1)^2 + 1} \sin(t) - \frac{1}{(2z-1)^2 + 1}\cos(t), \quad z \neq \frac{1}{2}.
				\end{align}
				Furthermore, at $z = \frac{1}{2}$ one verifies easily from \eqref{eq:a2an} that $a_2(\cdot,\frac{1}{2})$ does not admit a $2\pi$-periodic solution. Moreover, all odd coefficients $a_1,a_3,\dots,a_{2k-1}$ are zero and the even coefficients $a_2,a_4,\dots,a_{2k}$ are recursively given by
				\begin{align*}
					a_{2n}(t,z) &= \frac{2(n-1)e^{-(2nz-1)t}}{e^{2\pi(2nz-1)}-1} \bigg( \int_t^{2\pi} e^{(2nz-1)\tau} a_{2(n-1)}(\tau,z) d\tau \\
					&+ e^{2\pi(2nz-1)} \int_0^{t} e^{(2nz-1)\tau} a_{2(n-1)}(\tau,z) d\tau \bigg), \quad  z \neq \frac{1}{2n}.
				\end{align*}
				To obtain a semi-explicit representation for $a_{2n}$, we will prove by induction on $n = 1,2,\dots,k$ that 
				\begin{align} \label{eq:a2nindunction}
					a_{2n}(t,z) = 2^{n-1} (n-1)! \prod_{l=1}^{n} \frac{1}{2lz - 1}
					+ \alpha_n(z) \sin(t) + \beta_n(z) \cos(t), \quad z \neq \frac{1}{2n},  
				\end{align}
				where $\alpha_n$ and $\beta_n$ are well-defined rational functions on $\mathbb{R}$. Clearly, the claim holds for $n=1$ due to \eqref{eq:a2}. Assume that the claim holds for a certain $n \geq 2$. Along the same lines of the induction step in \Cref{ex:nonanalytic}, one derives
				\begin{align*}
					a_{2n}(t,z) = 2^{n-1} (n-1)! \prod_{l=1}^{n} \frac{1}{2lz - 1} &+  \frac{2(n-1)[(2nz-1)\alpha_{n-1}(z) + \beta_{n-1}(z)]}{(2nz-1)^2+1} \sin(t) \\
					&+ \frac{2(n-1)[(2nz-1)\beta_{n-1}(z) - \alpha_{n-1}(z)]}{(2nz-1)^2+1} \cos(t).
				\end{align*}
				It remains to show that the coefficients in front of the sine and cosine are well-defined rational functions on $\mathbb{R}$. Clearly,
				\begin{align*}
					\begin{pmatrix}
						\alpha_n(z) \\
						\beta_n(z)
					\end{pmatrix}
					= \frac{2(n-1)}{(2nz-1)^2+1}
					\begin{pmatrix}
						2nz-1 & 1 \\
						-1 & 2nz-1
					\end{pmatrix}
					\begin{pmatrix}
						\alpha_{n-1}(z) \\
						\beta_{n-1}(z)
					\end{pmatrix},
					\quad n \geq 3,
				\end{align*}
				with initial condition
				\begin{equation*}
					\alpha_2(z) = \frac{2z-1}{(2z-1)^2 + 1}, \quad \beta_2(z) = \frac{1}{(2z-1)^2 + 1}.
				\end{equation*}
				Solving this linear system of difference equations semi-explicitly yields
				\begin{align*}
					&\begin{pmatrix}
						\alpha_n(z) \\
						\beta_n(z)
					\end{pmatrix}
					= 2^{n-1} (n-1)!
					\bigg[\prod_{l=2}^n \frac{1}{(2lz-1)^2 + 1} 
					\begin{pmatrix}
						2lz-1 & 1 \\
						-1 & 2lz-1
					\end{pmatrix}
					\bigg]
					\begin{pmatrix}
						\alpha_2(z) \\
						\beta_2(z)
					\end{pmatrix}.
				\end{align*}
				Hence, $\alpha_n$ and $\beta_n$ are both rational functions that are well-defined on $\mathbb{R}$ since $(2lz-1)^2 + 1 \geq 0$ for all $l=1,\dots,n$. This concludes the induction step. On the other hand, if $z = \frac{1}{2n}$, then one can verify rather easily from \eqref{eq:a2an} that $a_n(\cdot,\frac{1}{2n})$ has no $2\pi$-periodic solution. 
				
				Using \eqref{eq:a2nindunction} in combination with \eqref{eq:Hexpansion}, we see that $\mathcal{H}(t,x,\cdot)$ is not $C^{2k}$-smooth on $(-\frac{1}{2k},\frac{1}{2k}]$ since $a_{2k}(\cdot,\frac{1}{2k})$ is simply undefined. Suppose now that $\mathcal{H}$ is $C^{\infty}$-smooth on $\mathbb{R} \times \{0\} \times \{0\}$, then for fixed $t \in \mathbb{R}$ and non-zero $x \in \mathbb{R}$ there exists an $\varepsilon > 0$ such that $\mathcal{H}(t,x,\cdot)$ is $C^\infty$-smooth on $(-\varepsilon,\varepsilon)$. Now, if $k \geq 1$ is an integer that satisfies $k > \frac{1}{2 \varepsilon}$, then $\mathcal{H}(t,x,\cdot)$ is not $C^{2k}$-smooth on $(-\varepsilon,\varepsilon)$. This contradicts the assumption that \eqref{eq:Exshrinking} admits a $C^{\infty}$-smooth $2\pi$-periodic center manifold at the origin. To illustrate the cascade of singularities of the periodic center manifold towards the origin, second- and fourth-order approximations at different time steps of $\mathcal{H}$ are presented in \Cref{fig:CMsingularity}.
			\end{proof} 
		\end{example}
		
		\begin{figure}[ht]
			\centering
			\includegraphics[width = 14cm]{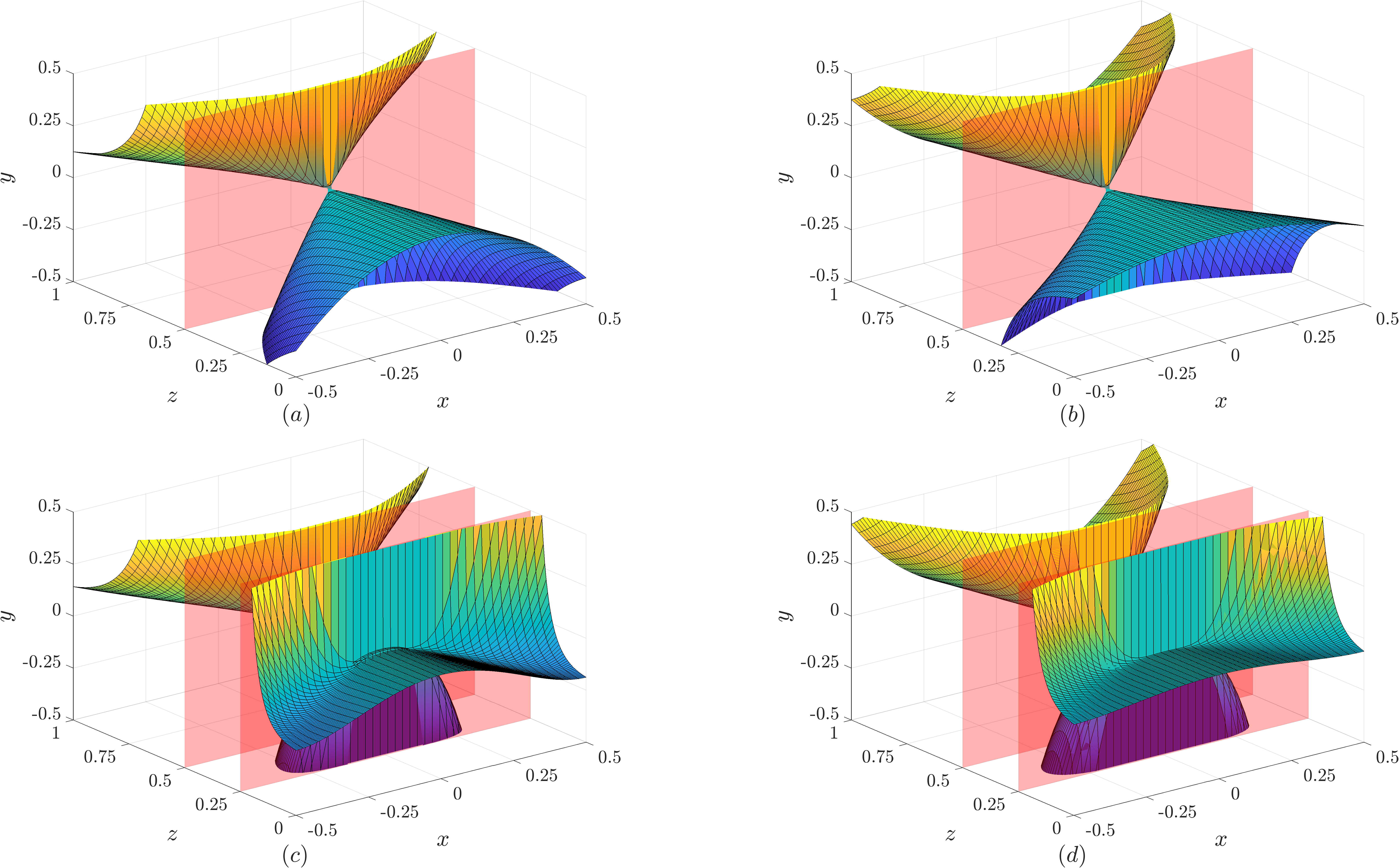}
			\caption{In $(a)$ a second-order approximation of $\mathcal{H}(0,\cdot,\cdot)$ and in $(b)$ a second-order approximation of $\mathcal{H}(\pi,\cdot,\cdot)$ . In $(c)$ a fourth-order approximation of $\mathcal{H}(0,\cdot,\cdot)$ and in $(d)$ a fourth-order approximation of $\mathcal{H}(\pi,\cdot,\cdot)$. The red vertical planes indicate the singularities at $z=\frac{1}{4}$ and $z = \frac{1}{2}$.}
			\label{fig:CMsingularity}
		\end{figure}
		
		\section{Conclusion and outlook}
		We have proven the existence of a periodic smooth locally invariant center manifold near a nonhyperbolic cycle in the setting of finite-dimensional ordinary differential equations. Our results are based on rather simple consequences of Floquet theory in combination with a fixed point argument on the easi\-ly available variation of constants formula for periodic (nonlinear) ODEs. In addition, we have provided several examples of (non)-unique, (non)-$C^\infty$-smooth and (non)-analytic periodic center manifolds to illustrate that periodic center manifolds admit similar interesting qualitative properties as center manifolds for equilibria.
		
		Despite our illustrations from \Cref{sec:examples} being very insightful on the nature of periodic center manifolds, it is not clear under which conditions a periodic center manifold is unique, non-unique or locally (non)-unique. To answer the first question, we believe that one must generalize techniques from \cite{Sijbrand1985} towards periodic center manifolds to state and prove a similar result as in \cite[Theorem 3.2]{Sijbrand1985}. Moreover, if a periodic center manifold is not uniquely determined, how much can two periodic center manifolds differ from each other? Such results have already been established in \cite[Section 4]{Sijbrand1985} for center manifolds for equilibria, but the question remains unanswered for periodic center manifolds. However, we have already seen in \Cref{ex:nonunique} that periodic center manifolds may differ from each other by a factor of $e^{-\frac{1}{x}}\phi(\frac{1}{x} - t)$, where $\phi$ is any $T$-periodic (at least) differentiable function. Furthermore, it is not clear under which conditions a $C^\infty$-smooth or analytic periodic center manifold may exist while this question is addressed and answered in \cite[Section 5 and 6]{Sijbrand1985} for center manifolds for equilibria. In particular, recall from \Cref{ex:nonanalytic} that the $C^\infty$-smooth periodic center manifold is not analytic for all $t \in \mathbb{R}$. However, is it possible to construct an example where a $C^{\infty}$-smooth periodic center manifold may change periodically from non-analytic to analytic? Or is there a possibility that the (non)-analyticitiy of a periodic center manifold is time-independent? 
		
		\section*{Acknowledgements}
		The authors would like to thank Prof. Renato Huzak (Hasselt University), Prof. Peter De Maesschalck (Hasselt University) and Dr. Heinz Han{\ss}mann (Utrecht University) for helpful discussions and suggestions.
		
		\appendix
		\section{Smoothness of the center manifold} \label{appendix:smoothness}
		In this appendix, we will prove that the map $\mathcal{C}$ inherits the same finite order of smoothness as the nonlinearity $R$. Our results are based on the theory of contraction on scales of Banach spaces, see \cite[Section IX.6 and Appendix IV]{Diekmann1995} and \cite{Vanderbauwhede1987,Hupkes2006,Hupkes2008,Church2018,Lentjes2023a} for applications of this theory to ordinary differential equations and (mixed) functional differential equations. Our arguments here are based on the strategy developed in the mentioned references and closely follow \cite{Lentjes2023a}.
		
		To prove additional smoothness of the map $\mathcal{C}$, let us first observe that we are only interested in pairs $(s,y_0) \in E_0$ due to \eqref{eq:mapC}. Therefore, let us incorporate the starting time $s$ inside the domain of the fixed point operator $\mathcal{G}_s^\eta$ from \Cref{subsec: Lipschitz CM}. Hence, define for $\eta \in (0,\min\{-a,b\})$ and sufficiently small $\delta > 0$ the map $\mathcal{G}^\eta$ by
		\begin{equation*}
			\BC_s^\eta \times E_0 \ni (u,s,y_0) \mapsto U_s^\eta y_0 + \mathcal{K}_s^\eta (\tilde{R}_\delta(u)) \in \BC_s^\eta,
		\end{equation*}
		and following the same steps from \Cref{subsec: Lipschitz CM}, we have that $\mathcal{G}^\eta(\cdot,s,y_0)$ has a unique fixed point $\hat{u}^\eta : E_0 \to \BC_s^\eta$ such that $\hat{u}^\eta(s,\cdot)$ is globally Lipschitz and satisfies $\hat{u}^\eta(s,0) = 0$ for all $s \in \mathbb{R}$. It turns out that the space $\BC_s^\eta$ is not really suited to increase smoothness of the center manifold. The main idea is to work with another $\eta$-exponent that makes a trade-off between ensuring smoothness while not losing the contraction property. To make this construction, choose an interval $[\eta_{-},\eta_{+}] \subset (0,\min\{-a,b\})$ such that $k \eta_{-} < \eta_{+}$ and $\delta > 0$ small enough to guarantee that
		\begin{equation} \label{eq:contractionregularity}
			L_{\delta}\|\mathcal{K}_s^\eta\|_{\eta,s} < \frac{1}{4}, \quad \forall \eta \in [\eta_{-},\eta_{+}], \ s \in \mathbb{R},
		\end{equation}
		which is possible since $L_{\delta} \to 0$ as $\delta \downarrow 0$ proven in \Cref{prop:global lipschitz}. 
		
		From this construction, it is clear that we would like to switch back and forth between, for example, the fixed points $\hat{u}^\eta(s,y_0)$ and $\hat{u}^{\eta_{-}}(s,y_0)$. Therefore, introduce for any $ 0 < \eta_1 \leq \eta_2 < \min\{-a,b\}$ the linear embedding $\mathcal{J}_s^{\eta_2,\eta_1} : \BC_s^{\eta_1} \hookrightarrow \BC_s^{\eta_2}$ and notice that this map is bounded since
		\begin{align*}
			\|u\|_{\eta_2, s} &= \sup_{t \in \mathbb{R}}e^{-\eta_2|t - s|}\|u(t)\| 
			\leq \sup_{t \in \mathbb{R}}e^{-\eta_1|t - s|}\|u(t)\|
			= \|u\|_{\eta_1, s} < \infty,
		\end{align*}
		for any $u \in \BC_s^{\eta_1}$. Hence, $\mathcal{J}_s^{\eta_2,\eta_1}$ is $C^{\infty}$-smooth and $\BC_s^{\eta_1}$ can be considered as a subspace of $\BC_s^{\eta_2}$. The following lemma shows we can switch back and forth between the fixed points of interest. 
		\begin{lemma}
			Let $0 < \eta_1 \leq \eta_2 < \min\{-a, b\}$ and $s \in \mathbb{R}$. Assume that $\hat{u}^{\eta_1}(s,y_0)$ is the fixed point of $\mathcal{G}^{\eta_1}(\cdot, s,y_0)$ for some $(s,y_0) \in E_0$. Then $\hat{u}^{\eta_2}(s,y_0) = \mathcal{J}_s^{\eta_2, \eta_1}\hat{u}^{\eta_1}(s,y_0)$.
		\end{lemma}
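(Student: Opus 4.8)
\textit{Proof proposal.} The plan is to show that the embedding $\mathcal{J}_s^{\eta_2,\eta_1}$ intertwines the two fixed point operators $\mathcal{G}^{\eta_1}(\cdot,s,y_0)$ and $\mathcal{G}^{\eta_2}(\cdot,s,y_0)$, and then to invoke uniqueness of the fixed point of the contraction $\mathcal{G}^{\eta_2}(\cdot,s,y_0)$.

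First I would record the compatibility of each of the three building blocks of $\mathcal{G}^\eta$ with the embedding. The operator $U_s^\eta$ sends $y_0$ to the single function $t\mapsto U(t,s)y_0$ regardless of the admissible exponent, so $U_s^{\eta_2}y_0 = \mathcal{J}_s^{\eta_2,\eta_1}(U_s^{\eta_1}y_0)$. The substitution operator acts by $\tilde R_\delta(u)=R_\delta(\cdot,u(\cdot))$ irrespective of the ambient weighted space, hence $\tilde R_\delta(\mathcal{J}_s^{\eta_2,\eta_1}u)=\mathcal{J}_s^{\eta_2,\eta_1}\tilde R_\delta(u)$ for all $u\in\BC_s^{\eta_1}$, both sides being well-defined by \Cref{lemma:substitution}. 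Finally, the defining formula for $\mathcal{K}_s^\eta f$ in \Cref{prop:ketas} is an $\eta$-independent sum of three integrals; for $f\in\BC_s^{\eta_1}\subseteq\BC_s^{\eta_2}$ these integrals are absolutely convergent whether computed in $\BC_s^{\eta_1}$ or in $\BC_s^{\eta_2}$ and yield the same function, so $\mathcal{K}_s^{\eta_2}(\mathcal{J}_s^{\eta_2,\eta_1}f)=\mathcal{J}_s^{\eta_2,\eta_1}(\mathcal{K}_s^{\eta_1}f)$. Combining these three identities gives
\begin{equation*}
	\mathcal{G}^{\eta_2}(\mathcal{J}_s^{\eta_2,\eta_1}u,s,y_0) = \mathcal{J}_s^{\eta_2,\eta_1}\,\mathcal{G}^{\eta_1}(u,s,y_0), \qquad u\in\BC_s^{\eta_1}.
\end{equation*}

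Next I would substitute $u=\hat u^{\eta_1}(s,y_0)$. Since this $u$ is a fixed point of $\mathcal{G}^{\eta_1}(\cdot,s,y_0)$, the right-hand side equals $\mathcal{J}_s^{\eta_2,\eta_1}\hat u^{\eta_1}(s,y_0)$, so $\mathcal{J}_s^{\eta_2,\eta_1}\hat u^{\eta_1}(s,y_0)\in\BC_s^{\eta_2}$ is a fixed point of $\mathcal{G}^{\eta_2}(\cdot,s,y_0)$. Because $\delta$ is small enough that $\mathcal{G}^{\eta_2}(\cdot,s,y_0)$ is a contraction on the Banach space $\BC_s^{\eta_2}$ (as in \Cref{thm:contraction}, using \eqref{eq:contractionregularity}), its fixed point is unique, whence $\hat u^{\eta_2}(s,y_0)=\mathcal{J}_s^{\eta_2,\eta_1}\hat u^{\eta_1}(s,y_0)$.

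The only point requiring a little care is the intertwining identity for $\mathcal{K}_s^\eta$: one must observe that $\mathcal{K}_s^{\eta_2}$, a priori defined only on $\BC_s^{\eta_2}$, produces on the subspace $\BC_s^{\eta_1}$ exactly the same three convergent integrals as $\mathcal{K}_s^{\eta_1}$, the exponents $\eta_1,\eta_2$ entering only the norm estimates of \Cref{prop:ketas} and not the pointwise values. Everything else is direct bookkeeping of the embedding, so I do not expect a genuine obstacle here.
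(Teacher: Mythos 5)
Your proposal is correct and follows essentially the same route as the paper: the paper's proof likewise observes that the fixed point operator does not depend explicitly on the exponent (in particular $\mathcal{K}_s^{\eta_1}u=\mathcal{K}_s^{\eta_2}u$ for $u\in\BC_s^{\eta_1}$) and then concludes by uniqueness of the fixed point together with the continuous embedding. Your version merely spells out the intertwining of the three building blocks in more detail than the paper does.
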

		
		\begin{proof}
			Note that the definition of the fixed point operator does not depend explicitly on the choice of $\eta \in (0,\min\{-a, b\})$, since $\mathcal{K}_{s}^{\eta_1} u = \mathcal{K}_{s}^{\eta_2} u$ for all $u \in \BC^{\eta_1}_{s}$. Then by uniqueness of the fixed point and since $\BC_s^{\eta_1}$ is continuously embedded in $\BC_s^{\eta_2}$, it is clear that $\hat{u}_{s}^{\eta_2}(y_0) = \mathcal{J}_s^{\eta_2, \eta_1}\hat{u}_{s}^{\eta_1}(y_0)$.
		\end{proof}
		
		A first step in increasing smoothness of the center manifold is to show that $\tilde{R}_{\delta}$ is sufficiently smooth. Recall from \Cref{subsec: modification nonlinearity} that $R_\delta$ is $C^k$-smooth. Consider now for any pair of integers $p,q \geq 0$ with $p + q \leq k$ the map $\Tilde{R}_\delta^{(p,q)}(u) \in \mathcal{L}^q(C(\mathbb{R},\mathbb{R}^n)) := \mathcal{L}^q(C(\mathbb{R},\mathbb{R}^n),C(\mathbb{R},\mathbb{R}^n))$ defined by
		\begin{align*}
			\Tilde{R}_{\delta}^{(p,q)}(u)(v_1,\dots,v_q)(t)
			:= D_1^p D_2^q R_\delta(t,u(t))(v_1(t),\dots,v_q(t)).
		\end{align*}
		Here $\mathcal{L}^q(Y,Z)$ denotes the space of $q$-linear mappings from $Y^q := Y \times \dots \times Y$ into $Z$ for Banach spaces $Y$ and $Z$. The following three lemmas, adapted from the literature towards the finite-dimensional ODE-setting, will be crucial in the proof of \Cref{thm:smoothnesscmt}.
		
		\begin{lemma}[{\cite[Lemma XII.7.3]{Diekmann1995} and \cite[Proposition 8.1]{Hupkes2008}}] 
			\label{lemma:smoothness3}
			Let $p,q \geq 0$ be integers with $p+q \leq k$ and $\eta \geq q \mu > 0$. Then for any $u \in C(\mathbb{R},\mathbb{R}^n)$ we have $\Tilde{R}_{\delta}^{(p,q)}(u) \in \mathcal{L}^q(\BC_s^\mu,\BC_s^\eta)$, where the norm is bounded by
			\begin{equation*}
				\| \Tilde{R}_{\delta}^{(p,q)} \| \leq \sup_{t \in \mathbb{R}} e^{-(\eta-q\mu)|t-s|} \| D_1^p D_2^q R_\delta(t,u(t)) \| < \infty.
			\end{equation*}
			Furthermore, consider any $0 \leq l \leq k - (p+q)$ and $\sigma > 0$. If $\eta > q \mu + l \sigma$, then the map $u \mapsto \Tilde{R}_{\delta}^{(p,q)}$ from $\BC_s^\sigma$ into $\mathcal{L}^q(\BC_s^\mu,\BC_s^\eta)$ is $C^l$-smooth with $D^l \Tilde{R}_{\delta}^{(p,q)} = \Tilde{R}_\delta^{(p,q+l)}. ~~~~~$
		\end{lemma}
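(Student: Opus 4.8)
The plan is to treat the two assertions in turn: the first is a direct estimate, and the second is a differentiability statement proved by induction on $l$, with the first assertion supplying the required norm bounds at each stage. The whole argument rests on three elementary properties of the modified nonlinearity established in \Cref{subsec: modification nonlinearity}: $R_\delta$ is $C^k$-smooth, it is $T$-periodic in its first argument (since $R$ and the projectors $\pi_0,\pi_\pm$ are), and it vanishes for $\|y\|\geq 4N\delta$. Consequently, for every pair of integers $p,j\geq 0$ with $p+j\leq k$ the derivative $D_1^p D_2^j R_\delta$ is continuous, $T$-periodic in $t$, and supported in $\mathbb{R}\times\overline{B(0,4N\delta)}$; restricting to the compact set $[0,T]\times\overline{B(0,4N\delta+1)}$ and invoking periodicity, one concludes that $D_1^p D_2^j R_\delta$ is bounded and uniformly continuous on all of $\mathbb{R}\times\mathbb{R}^n$. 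Write $M_{p,j}:=\sup_{(t,y)}\|D_1^p D_2^j R_\delta(t,y)\|<\infty$; this finiteness together with the uniform continuity are the only inputs needed from the nonlinearity.

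For the first assertion, fix $u\in C(\mathbb{R},\mathbb{R}^n)$ and $v_1,\dots,v_q\in\BC_s^\mu$. From $\|v_i(t)\|\leq e^{\mu|t-s|}\|v_i\|_{\mu,s}$ and the pointwise bound $\|D_1^p D_2^q R_\delta(t,u(t))(v_1(t),\dots,v_q(t))\|\leq\|D_1^p D_2^q R_\delta(t,u(t))\|\prod_{i=1}^q\|v_i(t)\|$, multiplying by $e^{-\eta|t-s|}$ and taking the supremum over $t$ gives
\begin{equation*}
\|\tilde{R}_\delta^{(p,q)}(u)(v_1,\dots,v_q)\|_{\eta,s}\leq\Big(\sup_{t\in\mathbb{R}}e^{-(\eta-q\mu)|t-s|}\|D_1^p D_2^q R_\delta(t,u(t))\|\Big)\prod_{i=1}^q\|v_i\|_{\mu,s}.
\end{equation*}
The prefactor is finite because $\eta-q\mu\geq 0$ (so the exponential is $\leq 1$) and it is bounded by $M_{p,q}$. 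Since $\tilde{R}_\delta^{(p,q)}(u)$ is manifestly $q$-linear and sends $q$-tuples of continuous functions to continuous functions, this shows $\tilde{R}_\delta^{(p,q)}(u)\in\mathcal{L}^q(\BC_s^\mu,\BC_s^\eta)$ with exactly the stated norm estimate.

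For the second assertion I would induct on $l$, proving the slightly more general statement in which the $q$ slots of $\tilde{R}_\delta^{(p,q)}$ are allowed to carry different exponents $\mu_1,\dots,\mu_q$ — this generality is forced by the inductive step, where each newly differentiated slot acquires the exponent $\sigma$ — with the estimate above holding verbatim after replacing $q\mu$ by $\mu_1+\dots+\mu_q$. The base case $l=0$ is continuity of $u\mapsto\tilde{R}_\delta^{(p,q)}(u)$: bounding the increment pointwise, using $\|u(t)-\bar u(t)\|\leq e^{\sigma|t-s|}\|u-\bar u\|_{\sigma,s}$, and splitting $\mathbb{R}$ into $\{|t-s|\leq T_\varepsilon\}$, where uniform continuity of $D_1^p D_2^q R_\delta$ controls the increment once $\|u-\bar u\|_{\sigma,s}$ is small, and $\{|t-s|>T_\varepsilon\}$, where the weight $e^{-(\eta-q\mu)|t-s|}$ (with $\eta>q\mu$) is below $\varepsilon/M_{p,q}$ for large $T_\varepsilon$, delivers the claim. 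For the step from $l-1$ to $l$ (so $\eta>q\mu+l\sigma$ and $p+q+l\leq k$), I would first show Fréchet differentiability of $\tilde{R}_\delta^{(p,q)}:\BC_s^\sigma\to\mathcal{L}^q(\BC_s^\mu,\BC_s^\eta)$ with $D\tilde{R}_\delta^{(p,q)}(u)=\tilde{R}_\delta^{(p,q+1)}(u)$, the latter viewed, via the natural isometry $\mathcal{L}(\BC_s^\sigma,\mathcal{L}^q(\BC_s^\mu,\BC_s^\eta))\cong\mathcal{L}^{q+1}(\BC_s^\sigma\times(\BC_s^\mu)^q,\BC_s^\eta)$, as the multilinear map with first slot at exponent $\sigma$. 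Applying Taylor's theorem with integral remainder to $D_1^p D_2^q R_\delta(t,\cdot)$ (which is $C^1$ since $p+q+1\leq k$), the remainder $\tilde{R}_\delta^{(p,q)}(u+h)-\tilde{R}_\delta^{(p,q)}(u)-\tilde{R}_\delta^{(p,q+1)}(u)(h,\cdot\,)$ has operator norm at most
\begin{equation*}
\|h\|_{\sigma,s}\sup_{t\in\mathbb{R}}\Big(e^{-(\eta-q\mu-\sigma)|t-s|}\sup_{\theta\in[0,1]}\big\|D_1^p D_2^{q+1}R_\delta(t,u(t)+\theta h(t))-D_1^p D_2^{q+1}R_\delta(t,u(t))\big\|\Big),
\end{equation*}
and the supremum tends to $0$ as $\|h\|_{\sigma,s}\to 0$ by the same two-region split as in the base case, now exploiting the strictly positive gap $\eta-q\mu-\sigma>0$ (which holds since $l\geq 1$) on $\{|t-s|>T_\varepsilon\}$ and uniform continuity of $D_1^p D_2^{q+1}R_\delta$ on $\{|t-s|\leq T_\varepsilon\}$. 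Finally, the induction hypothesis applied to $\tilde{R}_\delta^{(p,q+1)}$ — legitimate because $p+(q+1)+(l-1)\leq k$ and $\eta>(q\mu+\sigma)+(l-1)\sigma$ — says $u\mapsto D\tilde{R}_\delta^{(p,q)}(u)$ is $C^{l-1}$, hence $\tilde{R}_\delta^{(p,q)}$ is $C^l$, and iterating the derivative formula yields $D^l\tilde{R}_\delta^{(p,q)}=\tilde{R}_\delta^{(p,q+l)}$.

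I expect the main obstacle to be precisely the differentiability step, and within it the need for a single estimate that is uniform over all $t\in\mathbb{R}$ despite the perturbations $\theta h(t)$ being controlled only by an exponentially growing bound $e^{\sigma|t-s|}\|h\|_{\sigma,s}$. The resolution is exactly the trade-off encoded in the hypothesis $\eta>q\mu+l\sigma$: it leaves a strictly positive exponential decay rate $\eta-q\mu-\sigma$ that absorbs the merely bounded derivative increments on the unbounded region $\{|t-s|>T_\varepsilon\}$, while uniform continuity of $D_1^p D_2^{q+1}R_\delta$ handles the complementary bounded region. The remaining points — the $q$-linearity, moving between $\mathcal{L}^q(\BC_s^\mu,\BC_s^\eta)$ and spaces with mixed slot-exponents, and unwinding the iterated derivative identity — are routine bookkeeping.
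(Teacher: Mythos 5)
Your proposal is correct: the paper does not actually prove this lemma but imports it from \cite[Lemma XII.7.3]{Diekmann1995} and \cite[Proposition 8.1]{Hupkes2008}, and your argument --- the weighted multilinear estimate for the first assertion, then induction on $l$ via Taylor's theorem with the two-region split in $t$ that trades the strict gap $\eta > q\mu + l\sigma$ against the global boundedness and uniform continuity of the derivatives of the $T$-periodic, compactly $y$-supported $R_\delta$ --- is precisely the standard proof given in those references. The mixed-exponent generalization you introduce to run the induction (the new slot carrying the exponent $\sigma$) is exactly how the cited proofs handle it, so there is nothing genuinely different to compare.
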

		
		\begin{lemma}[{\cite[Lemma XII.7.6]{Diekmann1995} and \cite[Proposition 8.2]{Hupkes2008}}] 
			\label{lemma: smoothness4}
			Let $p,q \geq 0$ be integers with $p+q < k$ and let $\eta > q \mu + \sigma$ for some $\mu,\sigma > 0$. Consider a map $\Phi \in C^1(E_0,\BC_s^\sigma)$. Then the map $\Tilde{R}_{\delta}^{(p,q)} \circ \Phi : E_0 \to \mathcal{L}^q(\BC_s^\mu, \BC_s^\eta)$ is $C^1$-smooth with
			\begin{align*}
				D(\Tilde{R}_\delta^{(p,q)} \circ \Phi)(s_0,y_0)(v_1,\dots,v_q,(s_1,y_1)) =
				\Tilde{R}_\delta^{(p,q+1)}(\Phi(s_0,y_0))(v_1,\dots,v_q,D\Phi(s_0,y_0)(s_1,y_1)).~
			\end{align*}
			
		\end{lemma}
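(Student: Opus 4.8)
The plan is to recognize \Cref{lemma: smoothness4} as a chain-rule statement whose analytic content is already entirely carried by \Cref{lemma:smoothness3}, and then combine that lemma with the classical chain rule for Fréchet-differentiable maps. First I would invoke \Cref{lemma:smoothness3} with $l=1$: the hypothesis $p+q<k$ gives $1\leq k-(p+q)$, and $\eta>q\mu+\sigma$ is precisely the required inequality $\eta>q\mu+l\sigma$; moreover $\eta>q\mu$, so the first part of \Cref{lemma:smoothness3} already guarantees $\tilde{R}_\delta^{(p,q)}(u)\in\mathcal{L}^q(\BC_s^\mu,\BC_s^\eta)$ for every $u$. Hence the map $\tilde{R}_\delta^{(p,q)}\colon\BC_s^\sigma\to\mathcal{L}^q(\BC_s^\mu,\BC_s^\eta)$ is $C^1$, with $D\tilde{R}_\delta^{(p,q)}(u)=\tilde{R}_\delta^{(p,q+1)}(u)$, where the direction $w\in\BC_s^\sigma$ of differentiation occupies the last free slot of the $(q{+}1)$-linear map, i.e. $D\tilde{R}_\delta^{(p,q)}(u)w=\tilde{R}_\delta^{(p,q+1)}(u)(\,\cdot\,,\dots,\,\cdot\,,w)$.

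Next I would apply the chain rule. The center bundle $E_0$ is a finite-dimensional $C^k$-submanifold of $\mathbb{R}\times\mathbb{R}^n$ --- the $C^k$-smooth projectors $\pi_0(\cdot)$ of \Cref{prop:criteria} furnish local trivializations, so $C^1$-differentiability of a map defined on $E_0$ is well defined and $D\Phi(s_0,y_0)$ acts on the tangent space $T_{(s_0,y_0)}E_0$ --- and $\Phi\in C^1(E_0,\BC_s^\sigma)$ by assumption. Since also $\tilde{R}_\delta^{(p,q)}\in C^1(\BC_s^\sigma,\mathcal{L}^q(\BC_s^\mu,\BC_s^\eta))$ by the previous step, the composition $\tilde{R}_\delta^{(p,q)}\circ\Phi$ is $C^1$ on $E_0$ with
\[
D(\tilde{R}_\delta^{(p,q)}\circ\Phi)(s_0,y_0)=D\tilde{R}_\delta^{(p,q)}\big(\Phi(s_0,y_0)\big)\circ D\Phi(s_0,y_0).
\]
Substituting $D\tilde{R}_\delta^{(p,q)}(\Phi(s_0,y_0))=\tilde{R}_\delta^{(p,q+1)}(\Phi(s_0,y_0))$ and evaluating on $(v_1,\dots,v_q,(s_1,y_1))$, with $(s_1,y_1)\in E_0$ passing through $D\Phi(s_0,y_0)$ into the last slot, produces exactly the asserted identity. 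Continuity of $(s_0,y_0)\mapsto D(\tilde{R}_\delta^{(p,q)}\circ\Phi)(s_0,y_0)$ is then automatic, being the composition of the continuous maps $\Phi$, $u\mapsto D\tilde{R}_\delta^{(p,q)}(u)$ (continuous because $\tilde{R}_\delta^{(p,q)}$ is $C^1$), and $D\Phi$, followed by the bounded bilinear composition map on operator spaces.

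Given \Cref{lemma:smoothness3}, no genuinely hard step remains; the only thing requiring care is the exponent bookkeeping, and the point of that bookkeeping is conceptual rather than technical. The strict inequalities $\eta>q\mu+\sigma$ and $p+q<k$ reflect the fact that the substitution operator, in the weighted sup-norms, gains one order of differentiability only at the cost of a strictly larger growth exponent --- with $\eta=q\mu+\sigma$ one obtains merely Lipschitz continuity, not differentiability --- which is precisely why the scale-of-Banach-spaces formalism is needed and why this lemma, rather than a naive chain rule on a single fixed space, is the correct building block for the inductive argument establishing $C^k$-smoothness of $\mathcal{C}$. A secondary, purely formal issue is that differentiability ``on $E_0$'' must be read through the $C^k$-manifold structure of the center bundle; this is unproblematic thanks to the regularity of the projectors in \Cref{prop:criteria}.
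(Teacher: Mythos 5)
Your proposal is correct, and there is nothing to compare it against inside the paper itself: this lemma is stated without proof and imported from the cited sources, where the analytic content is likewise carried by the scale-of-Banach-spaces differentiability of the substitution operator (here \Cref{lemma:smoothness3}) with the present statement obtained as its chain-rule consequence, so your route is in substance the same. Your bookkeeping is exactly right---$p+q<k$ and $\eta>q\mu+\sigma$ are precisely the hypotheses of \Cref{lemma:smoothness3} with $l=1$, and reading $D\tilde{R}_\delta^{(p,q)}(u)=\tilde{R}_\delta^{(p,q+1)}(u)$ as the mixed $(q+1)$-linear map whose last slot accepts a $\BC_s^\sigma$-argument makes the displayed identity literally the chain rule---while your remark on the $C^k$-bundle structure of $E_0$ (via \Cref{prop:criteria}) settles the only remaining interpretive point about differentiability on the center bundle.
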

		
		\begin{lemma}[{\cite[Lemma XII.6.6 and XII.6.7]{Diekmann1995}}]
			\label{lemma:fixedpointsmooth}
			Let $Y_0,$ $Y,$ $Y_1,$ and $\Lambda$ be Banach spaces with continuous embeddings $J_0 : Y_0 \hookrightarrow Y$ and $J : Y \hookrightarrow Y_1$. Consider the fixed point problem $y = f(y,\lambda)$ for $f : Y \times \Lambda \to Y$. Suppose that the following conditions hold.
			\begin{enumerate}
				\item[$1.$] The function $g : Y_0 \times \Lambda \to Y_1$ defined by $ g(y_0,\lambda) := Jf(J_0y_0,\lambda)$ is of the class $C^1$ and there exist mappings $f^{2:} : J_0 Y_0 \times \Lambda \to \mathcal{L}(Y)$ and $f_1^{(1)} : J_0Y_0 \times \Lambda \to \mathcal{L}(Y_1)$ such that $D_1 g(y_0,\lambda) \xi = Jf^{(1)}(J_0y_0,\lambda)J_0$ for all $(y_0,\lambda,\xi) \in Y_0 \times \Lambda \times Y_0$ and $Jf^{(1)}(J_0y_0,\lambda)y = f_1^{(1)}(J_0y_0,\lambda)Jy$ for all $(y_0,\lambda,y) \in Y_0 \times \Lambda \times Y$.
				\item[$2.$] There exists a $\kappa \in [0,1)$ such that for all $\lambda \in \Lambda$ the map $f(\cdot,\lambda) : Y \to Y$ is Lipschitz continuous with Lipschitz constant $\kappa$, independent of $\lambda$. Furthermore, for any $\lambda \in \Lambda$ the maps $f^{(1)}(\cdot,\lambda)$ and $f_1^{(1)}(\cdot,\lambda)$ are uniformly bounded by $\kappa$.
				\item[$3.$] Under the previous condition, the unique fixed point $\Psi : \Lambda \to Y$ satisfies $\Psi(\lambda) = f(\Psi(\lambda),\lambda)$ and can be written as $\Psi = J_0 \circ \Phi$ for some continuous $\Phi : \Lambda \to Y_0$.
				\item[$4.$] The function $f_0 : Y_0 \times \Lambda \to Y$ defined by $f_0(y_0,\lambda) = f(J_0y_0,\lambda)$ has continuous partial derivative $D_2 f : Y_0 \times \Lambda \to \mathcal{L}(\Lambda,Y).$
				\item[$5.$] The mapping $Y_0 \times \Lambda \ni (y,\lambda) \mapsto J \circ f^{(1)}(J_0y,\lambda) \in \mathcal{L}(Y,Y_1)$ is continuous.
			\end{enumerate}
			Then the map $J \circ \Psi$ is of the class $C^1$ and $D(J \circ \Psi)(\lambda) = J \circ \mathcal{A}(\lambda)$ for all $\lambda \in \Lambda$, where $A = \mathcal{A}(\lambda) \in \mathcal{L}(\Lambda,Y)$ is the unique solution of the fixed point equation $A = f^{(1)}(\Psi(\lambda),\lambda) A + D_2 f_0(\Psi(\lambda),\lambda).~~~~~$
		\end{lemma}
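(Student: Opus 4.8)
The statement is quoted verbatim from \cite[Lemma XII.6.6 and XII.6.7]{Diekmann1995}, so in principle one may simply cite it; nevertheless, here is how I would organise a self-contained proof. The overall strategy is the classical \emph{contraction on scales of Banach spaces} argument. The fixed point $\Psi:\Lambda\to Y$ of $y=f(y,\lambda)$ exists and is continuous by the uniform contraction principle (condition~2), and by condition~3 it factors as $\Psi=J_0\circ\Phi$ with $\Phi:\Lambda\to Y_0$ continuous; but $f$ is only Lipschitz on $Y$, so one cannot differentiate $\Psi$ directly in $Y$. The plan is therefore to first guess the derivative via its own fixed point equation, and then to prove that the \emph{composition} $J\circ\Psi$ is differentiable, working in the weaker space $Y_1$, where $f$ becomes genuinely $C^1$ through $g=J\circ f_0$.

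First I would construct the candidate derivative. For fixed $\lambda$, the affine map $A\mapsto f^{(1)}(\Psi(\lambda),\lambda)A+D_2f_0(\Psi(\lambda),\lambda)$ on $\mathcal{L}(\Lambda,Y)$ is a contraction with constant $\le\kappa<1$ by condition~2, hence has a unique fixed point $\mathcal{A}(\lambda)$. Continuity of $\lambda\mapsto J\circ\mathcal{A}(\lambda)$ in $\mathcal{L}(\Lambda,Y_1)$ would then follow from continuity of $\Psi=J_0\circ\Phi$, of $D_2f_0$ (condition~4), and of $(y,\lambda)\mapsto J\circ f^{(1)}(J_0y,\lambda)$ (condition~5), together with the standard estimate for how fixed points of a uniform contraction depend on the parameter.

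Next I would prove differentiability of $J\circ\Psi$ at $\lambda$ with derivative $J\circ\mathcal{A}(\lambda)$. Writing $W(h):=\Psi(\lambda+h)-\Psi(\lambda)\in Y$, the fixed point identity together with conditions~2 and~4 gives $\|W(h)\|_Y\le(1-\kappa)^{-1}\|f(\Psi(\lambda),\lambda+h)-f(\Psi(\lambda),\lambda)\|_Y=O(\|h\|_\Lambda)$. Applying $J$ and using $\Psi=J_0\circ\Phi$ and $g=J\circ f_0$, one has $JW(h)=g(\Phi(\lambda+h),\lambda+h)-g(\Phi(\lambda),\lambda)$, and since $g\in C^1$ one may expand this by the integral form of Taylor's theorem along the segment joining the two base points. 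Using $D_1g(y_0,\mu)\xi=Jf^{(1)}(J_0y_0,\mu)J_0\xi$ with $J_0\xi=W(h)$, the identity $Jf^{(1)}(J_0y_0,\mu)y=f_1^{(1)}(J_0y_0,\mu)Jy$, and $D_2g=J\circ D_2f_0$, this rearranges into a fixed-point-type relation
\[
JW(h)-J\mathcal{A}(\lambda)h=f_1^{(1)}(\Psi(\lambda),\lambda)\bigl[JW(h)-J\mathcal{A}(\lambda)h\bigr]+E(h),
\]
where $E(h)$ collects the integrated increments $Jf^{(1)}(\Psi(\lambda)+tW(h),\lambda+th)-Jf^{(1)}(\Psi(\lambda),\lambda)$ acting on $W(h)$ and $JD_2f_0(\cdot)-JD_2f_0(\Psi(\lambda),\lambda)$ acting on $h$, integrated over $t\in[0,1]$. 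By continuity of $f^{(1)}$ (condition~5) and of $D_2f_0$ (condition~4), combined with $\|W(h)\|_Y=O(\|h\|_\Lambda)$ and continuity of $\Phi$ into $Y_0$, one obtains $\|E(h)\|_{Y_1}=o(\|h\|_\Lambda)$; since $\|f_1^{(1)}(\Psi(\lambda),\lambda)\|\le\kappa<1$, this yields $\|JW(h)-J\mathcal{A}(\lambda)h\|_{Y_1}\le(1-\kappa)^{-1}\|E(h)\|_{Y_1}=o(\|h\|_\Lambda)$, so $D(J\circ\Psi)(\lambda)=J\circ\mathcal{A}(\lambda)$; together with the first step this shows $J\circ\Psi\in C^1$ and that $\mathcal{A}(\lambda)$ solves the stated linear fixed point equation.

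The hard part will be the careful bookkeeping of which of the three norms ($Y_0$, $Y$, $Y_1$) is in force at each step. Because $\Phi$ is only \emph{continuous} into $Y_0$, the increment $\Phi(\lambda+h)-\Phi(\lambda)$ is controlled linearly in $h$ only after applying $J_0$, that is, in $Y$; this is precisely why differentiability can be established only for $J\circ\Psi$ in the weakest space $Y_1$, and why the structural identities of condition~1 relating $D_1g$, $f^{(1)}$ and $f_1^{(1)}$ are indispensable for closing the contraction estimate. Showing that $E(h)$ is genuinely $o(\|h\|_\Lambda)$ rather than merely $O(\|h\|_\Lambda)$ then depends on invoking the continuity hypotheses of conditions~4 and~5 in exactly the right operator topologies, which is the most delicate and error-prone part of the argument.
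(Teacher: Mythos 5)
The paper does not prove this lemma at all: it is quoted directly from \cite[Lemma XII.6.6 and XII.6.7]{Diekmann1995}, so the citation is the paper's entire ``proof''. Your sketch correctly reproduces the standard contraction-on-embedded-Banach-spaces argument of that reference (candidate derivative as fixed point of the affine contraction, $O(\|h\|)$ control of $\Psi(\lambda+h)-\Psi(\lambda)$ in $Y$, Taylor expansion of $g$ in $Y_1$ closed via the structural identities and $\|f_1^{(1)}\|\leq\kappa<1$), so it is consistent with what the paper relies on.
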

		\par
		\medskip
		\noindent
		Now we can prove the main results of this appendix.
		\begin{proposition} \label{thm:smoothnesscmt}
			For each $l \in \{1,\dots,k\}$ and $\eta \in (l\eta_{-},\eta_{+}] \subset (0,\min\{-a,b\})$, the map $\mathcal{J}_s^{\eta,\eta_{-}} \circ \hat{u}^{\eta_{-}} : E_0 \to \BC_s^\eta$ is $C^l$-smooth provided that $\delta > 0$ is sufficiently small.
		\end{proposition}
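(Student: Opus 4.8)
The plan is to prove the statement by induction on $l$, at each stage invoking the contraction-on-scales principle (\Cref{lemma:fixedpointsmooth}) applied to the fixed-point operator $\mathcal{G}^\eta$ and to the parameterized linear fixed-point equations obtained by formally differentiating
\[
\hat{u}^{\eta_{-}}(s,y_0) = \mathcal{G}^{\eta_{-}}\bigl(\hat{u}^{\eta_{-}}(s,y_0),s,y_0\bigr)
\]
with respect to $(s,y_0) \in E_0$. Throughout, $\delta>0$ is fixed small enough that \eqref{eq:contractionregularity} holds on all of $[\eta_{-},\eta_{+}]$, which is possible since $L_\delta \to 0$ as $\delta \downarrow 0$ (\Cref{prop:global lipschitz}); this bound is what makes $\mathcal{G}^\eta(\cdot,s,y_0)$ a $\tfrac14$-contraction uniformly in $s$ and simultaneously controls the auxiliary maps appearing below.

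For the base case $l=1$, fix $\eta \in (\eta_{-},\eta_{+}]$ and choose an auxiliary exponent $\eta_1 \in (\eta_{-},\eta)$. I would apply \Cref{lemma:fixedpointsmooth} with $\Lambda = E_0$, $Y_0 = \BC_s^{\eta_{-}}$, $Y = \BC_s^{\eta_1}$, $Y_1 = \BC_s^{\eta}$ (the embeddings being the $\mathcal{J}_s^{\cdot,\cdot}$), $f(u,(s,y_0)) := \mathcal{G}^{\eta_1}(u,s,y_0)$, and $f^{(1)}(u,(s,y_0)) := \mathcal{K}_s^{\eta_1}\circ \tilde{R}_\delta^{(0,1)}(u)$. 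Conditions~1 and~5 follow from \Cref{lemma:smoothness3} and \Cref{lemma: smoothness4} (the strict inequality $\eta > \eta_{-}$ provides the one spare unit of exponent needed for the required continuity and differentiability of $u \mapsto \tilde{R}_\delta^{(0,1)}(u)$ between weighted spaces); condition~2 follows from \eqref{eq:contractionregularity}, which bounds the Lipschitz constant of $f(\cdot,\lambda)$ and the norms of $f^{(1)},f_1^{(1)}$ by $\tfrac14$; condition~3 follows from the switching lemma $\hat{u}^{\eta_1} = \mathcal{J}_s^{\eta_1,\eta_{-}}\hat{u}^{\eta_{-}}$; and condition~4 holds because $y_0 \mapsto U_s^\eta y_0$ is linear bounded and $(s,y_0)\mapsto U(t,s)y_0$ is smooth in $s$ as well. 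The conclusion is that $\mathcal{J}_s^{\eta,\eta_{-}}\circ\hat{u}^{\eta_{-}}$ is $C^1$, with derivative $A$ solving the fixed-point equation $A = \mathcal{K}_s^\eta\,\tilde{R}_\delta^{(0,1)}(\hat{u}^{\eta_{-}})\,A + D_2\mathcal{G}^\eta(\hat{u}^{\eta_{-}},\cdot)$.

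For the inductive step, suppose the claim holds for $l-1$. By the base case the derivative $D(\mathcal{J}_s^{\eta,\eta_{-}}\circ\hat{u}^{\eta_{-}})$ is the fixed point of a contraction of exactly the same structural type, namely a linear fixed-point equation in $\mathcal{L}(E_0,\BC_s^\eta)$ with contraction factor $\tfrac14$ whose coefficient $\tilde{R}_\delta^{(0,1)}(\hat{u}^{\eta_{-}})$ depends on the parameter only through $\hat{u}^{\eta_{-}}$. Applying \Cref{lemma:fixedpointsmooth} once more, now with the role of the scale of Banach spaces played by $\mathcal{L}(E_0,\BC_s^{\,\cdot})$ and using \Cref{lemma: smoothness4} to differentiate $(s,y_0)\mapsto \tilde{R}_\delta^{(0,1)}(\hat{u}^{\eta_{-}}(s,y_0))$ — which is where the inductive hypothesis that $\hat{u}^{\eta_{-}}$ is $C^{l-1}$ into a slightly stronger space is consumed — shows this derivative is $C^{l-1}$, hence $\mathcal{J}_s^{\eta,\eta_{-}}\circ\hat{u}^{\eta_{-}}$ is $C^l$; each such step costs one further unit of exponent, which is precisely why the admissible range contracts from $((l-1)\eta_{-},\eta_{+}]$ to $(l\eta_{-},\eta_{+}]$.

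The main obstacle is the bookkeeping of the weighted-norm exponents: every differentiation spends roughly $\eta_{-}$ worth of exponent to upgrade a $C^0$ statement valued in $\BC_s^\mu$ to a $C^1$ statement valued in $\BC_s^{\mu+\sigma}$ (the inequalities $\eta > q\mu + l\sigma$ of \Cref{lemma:smoothness3} and \Cref{lemma: smoothness4}), while the contraction constant must stay below $1$ uniformly in $s$ over the entire interval $[\eta_{-},\eta_{+}]$. The hypothesis $k\eta_{-} < \eta_{+}$ is exactly what guarantees enough room to iterate this $k$ times, and the choice of $\delta$ securing \eqref{eq:contractionregularity} on all of $[\eta_{-},\eta_{+}]$ closes the induction. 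Verifying conditions~1--5 of \Cref{lemma:fixedpointsmooth} at each step is routine but lengthy; all the substance lies in matching these exponents.
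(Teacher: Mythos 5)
Your proposal follows essentially the same route as the paper: induction on $l$, with \Cref{lemma:fixedpointsmooth} applied first to $\mathcal{G}^{\eta}$ and then, in the inductive step, to the linear fixed-point equations satisfied by the derivatives in the scale $\mathcal{L}^l(E_0,\BC_s^{\,\cdot})$, with \Cref{lemma:smoothness3} and \Cref{lemma: smoothness4} handling the substitution-operator smoothness and the exponent bookkeeping under $k\eta_{-}<\eta_{+}$ and \eqref{eq:contractionregularity}. The only cosmetic deviation is your auxiliary intermediate exponent $\eta_1$ in the base case, where the paper simply takes $Y_0=Y=\BC_s^{\eta_{-}}$ with $J_0$ the identity; this changes nothing of substance.
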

		\begin{proof}
			To begin, we choose $\delta > 0$ small enough so that \eqref{eq:contractionregularity} holds. We prove the assertion by induction on $l$. Let $l=k=1$ and $\eta \in (\eta_{-},\eta_+]$ be given. We show that \Cref{lemma:fixedpointsmooth} applies with the Banach spaces $Y_0 = Y = \BC_s^{\eta_{-}}, Y_1 = \BC_s^\eta$ and $\Lambda = E_0$, and operators
			\begin{align*}
				f(u,s,y_0) &= \mathcal{G}^{\eta_{-}}(u,s,y_0), \\
				f^{(1)}(u,s,y_0) &= \mathcal{K}_s^{\eta_{-}} \circ \Tilde{R}_{\delta}^{(0,1)}(u), \\
				f_1^{(1)} &= \mathcal{K}_s^{\eta} \circ \Tilde{R}_{\delta}^{(0,1)}(u),
			\end{align*}
			with embeddings $J = \mathcal{J}_s^{\eta,\eta_{-}}$ and $J_0$ denotes the identity map. In the context of \Cref{lemma:fixedpointsmooth}, the map $g$ is given by $\mathcal{G}^\eta$ due to the linearity of the embedding $J$. Because $(s,y_0) \mapsto U(\cdot,s)y_0, s \mapsto \mathcal{K}_s^\eta$ and $u \mapsto \Tilde{R}_\delta(u)$ are $C^1$-smooth (\Cref{sec:Floquet}, \Cref{prop:ketas} and \Cref{lemma:smoothness3}), the map $g$ is $C^1$-smooth and one can easily verify the additional equalities. The second condition follows from \eqref{eq:contractionregularity} and the fact that the Lipschitz constant is independent of $s \in \mathbb{R}$ due to \Cref{prop:ketas}. The third condition follows from the fact that $\Psi$ is given by $\hat{u}^{\eta_{-}}$ and therefore well-defined due to \Cref{thm:contraction}. The mentioned results show that the fourth condition is satisfied. It follows from \Cref{prop:ketas} and \Cref{lemma:smoothness3} that the fifth condition is satisfied as well. Hence, we conclude that the map $\mathcal{J}_s^{\eta,\eta_{-}} \circ \hat{u}^{\eta_{-}}$ is of the class $C^1$ and that $D(\mathcal{J}_s^{\eta,\eta_{-}} \circ \hat{u}^{\eta_{-}}) = \mathcal{J}_s^{\eta,\eta_{-}} \circ \hat{u}^{\eta_{-},(1)} \in \mathcal{L}(E_0,\BC_s^\eta)$, where $\hat{u}^{\eta_{-},(1)}(s,y_0)$ is the unique solution of
			\begin{align*}
				w^{(1)} &= \mathcal{K}_s^{\eta_{-}} \circ \Tilde{R}_{\delta}^{(0,1)}(\hat{u}^{\eta_{-}}(s,y_0))w^{(1)} + U_s^{\eta_{-}}
				=: F_{\eta_{-}}^{(1)}(w^{(1)},s,y_0)
			\end{align*}
			in the space $\mathcal{L}(E_0,\BC_s^{\eta_{-}})$. Here 
			$$
			F_{\eta_{-}}^{(1)}: \mathcal{L}(E_0,\BC_s^{\eta_{-}}) \times E_0 \to \mathcal{L}(E_0,\BC_s^{\eta_{-}})
			$$ 
			and notice that $F_{\eta_{-}}^{(1)} (\cdot,s,y_0)$ is a uniform contraction (\Cref{lemma:smoothness3}), which proves the uniqueness of the fixed point.
			
			To specify the induction hypothesis, consider any integer $1 \leq l < k$ and suppose that for all $1 \leq q \leq l$ and all $\eta \in (q\eta_{-},\eta_{+}]$ that the map $\mathcal{J}_s^{\eta,\eta_{-}} \circ \hat{u}^{\eta_{-}}$ is $C^q$-smooth with $D^q(\mathcal{J}_s^{\eta,\eta_{-}} \circ \hat{u}^{\eta_{-}}) = \mathcal{J}_s^{\eta,\eta_{-}} \circ \hat{u}^{\eta_{-},(q)} \in \mathcal{L}^{q}(E_0,\BC_s^{q \eta})$, where $\hat{u}^{\eta_{-},(q)}$ is the unique solution of 
			\begin{align*}
				w^{(l)} &= \mathcal{K}_s^{l \eta_{-}} \circ \Tilde{R}_{\delta}^{(0,1)}(\hat{u}^{\eta_{-}}(s,y_0))w^{(l)} + H_{\eta_{-}}^{(l)}(s,y_0)
				=: F_{l\eta_{-}}^{(l)}(w^{(l)},s,y_0)
			\end{align*}
			in the space $\mathcal{L}^q(E_0,\BC_s^{q \eta_{-}})$. Here $H_{\eta_{-}}^{(1)}(s,y_0) = U_s^{\eta_{-}}y_0$ and for $\nu \in [\eta_{-},\eta_{+}]$ and $l \geq 2$ we have that $H_\nu^{(l)}(s,y_0)$ is a finite sum of terms of the form
			\begin{align*}
				\mathcal{K}_s^{l \nu} \circ \Tilde{R}_\delta^{(0,q)}(\hat{u}^{\eta_{-}}(s,y_0))
				(\hat{u}^{\eta_{-},(r_1)}(s,y_0),\dots,\hat{u}^{\eta_{-},(r_q)}(s,y_0)),
			\end{align*}
			with $2 \leq q \leq l$ and $1 \leq r_i < l$ for $i=1,\dots,q$ such that $r_1+\dots+r_q = l$. Here $F_{l\eta}^{(l)} : \mathcal{L}^l(E_0,\BC_s^{l \eta}) \times E_0 \to \mathcal{L}^l(E_0,\BC_s^{l \eta})$ is a uniform contraction (\Cref{lemma:smoothness3}) for any $\eta \in [\eta_{-},\eta_{+}]$, which guarantees the uniqueness of the fixed point.
			
			For the induction step, fix some $\eta \in ((l+1)\eta_{-},\eta_{+}]$ and choose $\sigma,\mu > 0$ such that $\eta_{-} < \sigma < (l+1) \sigma < \mu < \eta$. We show that \Cref{lemma:fixedpointsmooth} applies with the Banach spaces $Y_0 = \mathcal{L}^l(E_0,\BC_s^{l\sigma}), Y = \mathcal{L}^{l}(E_0,\BC_s^\mu), Y_1 = \mathcal{L}^l(E_0,\BC_s^\eta)$ and $\Lambda = E_0$, and operators
			\begin{align*}
				f(u,s,y_0)&=\mathcal{K}_s^{\mu} \circ \Tilde{R}_\delta^{(0,1)}(\hat{u}^{\eta_{-}}(s,y_0))u 
				+ H_{\mu/l}^{(l)}(s,y_0), \\
				f^{(1)}(u,s,y_0) &= \mathcal{K}_s^{\mu} \circ \Tilde{R}_\delta^{(0,1)}(\hat{u}^{\eta_{-}}(s,y_0)) 
				\in \mathcal{L}(\mathcal{L}^l(E_0,\BC_s^\mu)), \\
				f_1^{(1)}(u,s,y_0) &= \mathcal{K}_s^{\eta} \circ \Tilde{R}_\delta^{(0,1)}(\hat{u}^{\eta_{-}}(s,y_0)) 
				\in \mathcal{L}(\mathcal{L}^l(E_0,\BC_s^\eta)).
			\end{align*}
			To verify the first condition, we have to check that $g : \mathcal{L}^l(E_0,\BC_s^{l \sigma}) \times E_0 \to \mathcal{L}(E_0,\BC_s^\eta)$ given by
			\begin{align*}
				g(u,s,y_0) = \mathcal{K}_s^{\eta} \circ \Tilde{R}_\delta^{(0,1)}(\hat{u}^{\eta_{-}}(s,y_0))u 
				+ \mathcal{J}_s^{\eta,\mu} \circ H_{\mu/l}^{(l)}(s,y_0)
			\end{align*}
			is $C^1$-smooth, where now $\mathcal{J}_s^{\eta,\mu} : \mathcal{L}^l(E_0,\BC_s^\mu) \hookrightarrow \mathcal{L}^l(E_0,\BC_s^\eta)$ is the continuous embedding. Clearly, $g$ is $C^1$-smooth in the first variable since it is linear. For the second variable, notice that the map $(s,y_0) \mapsto \mathcal{K}_s^{\eta} \circ \Tilde{R}_\delta^{(0,1)}(\hat{u}^{\eta_{-}}(s,y_0))u$ is $C^1$-smooth due to \Cref{lemma: smoothness4} with $\mu > (l+1)\sigma$ and the $C^1$-smoothness of $(s,y_0) \mapsto \mathcal{J}_s^{\sigma,\eta_{-}} \hat{u}^{\eta_{-}}(s,y_0)$ for any $\sigma \geq \eta_{-}$. For the $C^1$-smoothness of the map $H_{\mu/l}^{(l)}$, we get differentiability from \Cref{lemma: smoothness4} and so we have that the derivative of this map is a finite sum of terms of the form
			\begin{align*}
				&\mathcal{K}_s^\mu \circ \tilde{R}_{\delta}^{(0,q+1)}(\hat{u}^{\eta_{-}}(s,y_0))(\hat{u}^{\eta_{-},(r_1)}(s,y_0),\dots, \hat{u}^{\eta_{-},(r_q)}(s,y_0)) \\
				&+ \sum_{j=1}^q \mathcal{K}_s^\mu \circ \tilde{R}_{\delta}^{(0,q)}(\hat{u}^{\eta_{-}}(s,y_0))(\hat{u}^{\eta_{-},(r_1)}(s,y_0),\dots,\hat{u}^{\eta_{-},(r_j + 1)}(s,y_0),\dots, \hat{u}^{\eta_{-},(r_q)}(s,y_0))
			\end{align*}
			and each $\hat{u}^{\eta_{-},(r_j)}(s,y_0)$ is a map from $E_0$ into $\BC_s^{j \sigma}$ for $j=1,\dots,q$. An application of \Cref{lemma:smoothness3} with $\mu > (l+1) \sigma$ ensures the continuity of $DH_{\mu/l}^{(l)}(s,y_0)$ and consequently that of $\mathcal{J}_s^{\eta,\mu} DH_{\mu/l}^{(l)}(s,y_0)$. The remaining calculations from the first condition are then easily checked, and condition four can be proven similarly. The Lipschitz condition and boundedness for the second condition follows by the choice of $\delta > 0$ chosen at the beginning of the proof and the contractivity of $H_{\mu/l}^{(l)}$ described above. To prove the third condition, observe that one can write
			\begin{equation*}
				\mathcal{K}_s^\eta \circ \Tilde{R}_{\delta}^{(0,1)}(\hat{u}^{\eta_{-}}(s,y_0)) = \mathcal{J}_s^{\eta,\mu} \mathcal{K}_s^\mu \circ \Tilde{R}_{\delta}^{(0,1)}(\hat{u}^{\eta_{-}}(s,y_0))
			\end{equation*}
			and applying \Cref{lemma:smoothness3} together with the $C^1$-smoothness of $\hat{u}^{\eta_{-}}$ to obtain the continuity of $(s,y_0) \mapsto \Tilde{R}_{\delta}^{(0,1)}(\hat{u}^{\eta_{-}}(s,y_0))$. This also proves the fifth condition, and so we conclude that $\hat{u}^{\eta_{-}} : E_0 \mapsto \mathcal{L}^l(E_0,\BC_s^\eta)$ is of the class $C^1$ with derivative $\hat{u}^{\eta_{-},(l+1)} = D\hat{u}^{\eta_{-},(l)} \in \mathcal{L}^{l+1}(E_0,\BC_s^\eta)$ that is the unique solution of
			\begin{equation*}
				w^{(l+1)} = \mathcal{K}_s^\mu \circ \Tilde{R}_{\delta}^{(0,1)}(\hat{u}^{\eta_{-},(l+1)})w^{(l+1)} + H_{\mu / (l+1)}^{(l+1)}(s,y_0),
			\end{equation*}
			where 
			\begin{align*}
				H_{\mu / (l+1)}^{(l+1)}(s,y_0) = \mathcal{K}_s^\mu \circ \Tilde{R}_{\delta}^{(0,2)}(\hat{u}^{\eta_{-}}(s,y_0)) (\hat{u}^{\eta_{-},(l)}(s,y_0),\hat{u}^{\eta_{-},(1)}(s,y_0)) + DH_{\mu/l}^{(l)}(s,y_0).
			\end{align*}
			A similar argument as in the proof of the $l=k=1$ case shows that the unique fixed point $\hat{u}^{\eta_{-},(l+1)}$ is also contained in $\mathcal{L}^{l+1}(E_0,\BC_s^{(l+1)\eta_{-}})$. Hence, the map $\mathcal{J}_s^{\eta,\eta_{-}} \circ \hat{u}^{\eta_{-}}$ is of the class $C^{l+1}$ provided that $\eta \in ((l+1)\eta_{-},\eta_{+}]$ and $\delta > 0$ is sufficiently small.
		\end{proof}
		\begin{theorem} \label{thm:smoothnessC}
			The map $\mathcal{C} : E_0 \to \mathbb{R}^n$ from \eqref{eq:mapC} is $C^k$-smooth.
		\end{theorem}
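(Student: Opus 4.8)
The plan is to deduce $C^k$-smoothness of $\mathcal{C}$ from \Cref{thm:smoothnesscmt} by post-composing the $\BC_s^\eta$-valued fixed point with a fixed bounded linear evaluation. First I would rewrite \eqref{eq:mapC}: by uniqueness of the fixed point together with the continuous embedding $\mathcal{J}_s^{\eta,\eta_-}$ one has $\hat{u}_s^\eta(y_0)=\mathcal{J}_s^{\eta,\eta_-}\hat{u}_s^{\eta_-}(y_0)$ for every admissible $\eta$, so that, writing $\hat{u}^{\eta_-}(s,y_0):=\hat{u}_s^{\eta_-}(y_0)$ as in the appendix,
\begin{equation*}
	\mathcal{C}(s,y_0)=\eva_s\bigl((\mathcal{J}_s^{\eta,\eta_-}\circ\hat{u}^{\eta_-})(s,y_0)\bigr),
\end{equation*}
where $\eva_s:\BC_s^\eta\to\mathbb{R}^n$, $f\mapsto f(s)$, is the evaluation operator already used in the proof of \Cref{prop:bundle}.

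Next I would fix the exponent. Since the interval $[\eta_-,\eta_+]\subset(0,\min\{-a,b\})$ was chosen with $k\eta_-<\eta_+$, the range $(k\eta_-,\eta_+]$ is a nonempty subinterval of $(0,\min\{-a,b\})$; picking $\eta$ in it and $\delta>0$ small enough that \Cref{thm:smoothnesscmt} applies with $l=k$, that proposition gives that $(s,y_0)\mapsto(\mathcal{J}_s^{\eta,\eta_-}\circ\hat{u}^{\eta_-})(s,y_0)\in\BC_s^\eta$ is $C^k$-smooth on $E_0$ (working in a local $C^k$-trivialization of the center bundle built from the $C^k$-frame of $E_0(s)$ provided by \Cref{lemma:basis} and \Cref{prop:criteria}). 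Since $\eva_s$ is bounded linear with $\|\eva_s\|\le 1$ uniformly in $s$, it is $C^\infty$, and post-composition with a bounded linear operator preserves $C^k$-smoothness; hence $\mathcal{C}$ is $C^k$-smooth, which is the assertion. In particular $\mathcal{C}$ and its derivatives up to order $k$ depend continuously on $(s,y_0)$, so $\mathcal{W}^c$, $\mathcal{W}_{\loc}^c$, and $\mathcal{W}_{\loc}^c(\Gamma)$ are $C^k$-manifolds, completing the proof of \Cref{thm:main}.

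The one genuinely delicate point — and the step I expect to require the most care — is bookkeeping the dependence of the weighted space $\BC_s^\eta$ on the very same parameter $s$ that is then inserted into $\eva_s$: the evaluation is at the running base time, at which the norm is also centered. This is handled exactly as in the contraction-on-scales framework: via the time-shift isometry $f(\cdot)\mapsto f(\cdot+s)$ (equivalently, the isometry $\iota$ introduced in \Cref{sec:existence}) one identifies every $\BC_s^\eta$ with a single reference Banach space, on which $\eva_s$ becomes the fixed operator ``evaluate at $0$'', so that \Cref{thm:smoothnesscmt} and \Cref{lemma:fixedpointsmooth} are applied with honestly fixed Banach spaces and the composition above is a composition of smooth maps in the strict sense. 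Everything else reduces to the routine observations already assembled in the appendix.
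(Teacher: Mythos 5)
Your argument is correct and is essentially the paper's own proof: write $\mathcal{C}(s,y_0)=\eva_s(\mathcal{J}_s^{\eta,\eta_-}\hat{u}^{\eta_-}(s,y_0))$, invoke \Cref{thm:smoothnesscmt} with $l=k$ and $\eta\in(k\eta_-,\eta_+]$, and conclude via boundedness and linearity of the evaluation operator. Your extra remarks on the $s$-dependence of the spaces $\BC_s^\eta$ and the identification via the isometry are a legitimate clarification of a point the paper passes over silently, but they do not change the route.
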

		\begin{proof}
			Let $ \eta \in [\eta_{-},\eta_{+}] \subset (0,\min\{-a,b\})$ such that $k \eta_{-} < \eta_{+}$. Let $\eva_s$ denote the bounded linear evolution operator (at time $s$) defined in the proof of \Cref{prop:bundle}. Recall that $\mathcal{C}(s,y_0) = \hat{u}^\eta(s,y_0)(s) = \eva_s(\hat{u}^\eta(s,y_0)),$ and so $\mathcal{C}(s,y_0) = \eva_s(\mathcal{J}_s^{\eta,\eta_{-}} \hat{u}^{\eta_{-}}(s,y_0))$. The result follows now from \Cref{thm:smoothnesscmt}. 
		\end{proof}
		
		To study in \Cref{prop:bundle} the tangent bundle of the center manifold, we have to use the partial derivative of the map $\mathcal{C}$ in the second component. The following result shows that such (higher order) partial derivatives are uniformly Lipschitz continuous.
		\begin{corollary} \label{cor:LipschitzC}
			For each $l \in \{0,\dots,k\}$, there exists a constant $L(l) > 0$ such that 
			$$\|D_2^l \mathcal{C}(s,y_0) - D_2^l \mathcal{C}(s,z_0) \| \leq L(l) \|y_0 - z_0\|
			$$ for all $(s,y_0),(s,z_0) \in E_0$.
		\end{corollary}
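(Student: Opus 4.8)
The plan is to argue by induction on $l$, the base case $l=0$ being exactly \Cref{lemma:lipschitzCMT}. The engine of the induction is the fixed-point description of the derivatives extracted in the proof of \Cref{thm:smoothnesscmt}: modulo the continuous embeddings $\mathcal{J}_s^{\eta,\eta_-}$, the restriction of the derivative to the $E_0(s)$-directions, and the evaluation $\eva_s$ (which has operator norm $\le 1$ out of every $\BC_s^\eta$, uniformly in $s$), the quantity $D_2^l\mathcal{C}(s,y_0)$ is the value at $s$ of $w^{(l)}(s,y_0):=\hat u^{\eta_-,(l)}(s,y_0)$, the unique fixed point of
\begin{equation*}
 w^{(l)} \longmapsto \mathcal{K}_s^{l\eta_-}\circ\tilde{R}_\delta^{(0,1)}(\hat u^{\eta_-}(s,y_0))\,w^{(l)} + H^{(l)}(s,y_0),
\end{equation*}
where $H^{(1)}(s,y_0)=U_s^{\eta_-}$ and, for $l\ge 2$, $H^{(l)}(s,y_0)$ is a finite sum of terms $\mathcal{K}_s^{l\eta_-}\circ\tilde{R}_\delta^{(0,q)}(\hat u^{\eta_-}(s,y_0))(\hat u^{\eta_-,(r_1)}(s,y_0),\dots,\hat u^{\eta_-,(r_q)}(s,y_0))$ with $2\le q\le l$, $1\le r_i<l$, $\sum_i r_i=l$. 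By \eqref{eq:contractionregularity} this map is a uniform contraction in $w^{(l)}$, with contraction constant independent of $(s,y_0)$. Two facts will be carried along the induction: for $r<l$ the map $\hat u^{\eta_-,(r)}(s,\cdot)$ is Lipschitz with an $s$-independent constant (the induction hypothesis, pulled back through $\eva_s$); and for $1\le r\le l$ the map $\hat u^{\eta_-,(r)}(s,\cdot)$ is uniformly bounded, since a fixed point of a uniform contraction is bounded in terms of its inhomogeneity and $H^{(r)}$ is uniformly bounded — $U_s^{\eta_-}$ is by \Cref{lemma:Uetas}, each $\tilde{R}_\delta^{(0,q)}$ has norm $\le\sup_{t,y}\|D_2^qR_\delta(t,y)\|<\infty$ because $R_\delta(t,\cdot)$ has support in a fixed ball uniformly in $t$, and the lower-order factors are bounded at the previous step.

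First I would subtract the fixed-point equations at $y_0$ and at $z_0$; the uniform contractivity (absorbing the term $\kappa\|w^{(l)}(s,y_0)-w^{(l)}(s,z_0)\|$) gives, in a suitable weighted norm and with an $s$-independent constant $C$,
\begin{equation*}
 \|w^{(l)}(s,y_0)-w^{(l)}(s,z_0)\| \le C\Big(\big\|\big(\mathcal{K}_s^{l\eta_-}\tilde{R}_\delta^{(0,1)}(\hat u^{\eta_-}(s,y_0))-\mathcal{K}_s^{l\eta_-}\tilde{R}_\delta^{(0,1)}(\hat u^{\eta_-}(s,z_0))\big)w^{(l)}(s,y_0)\big\| + \big\|H^{(l)}(s,y_0)-H^{(l)}(s,z_0)\big\|\Big).
\end{equation*}
For the first summand I would use that $f\in C^{k+1}$ together with the fixed, $t$-uniform support of $R_\delta(t,\cdot)$ makes every $D_2^qR_\delta$ ($q\le k$) globally Lipschitz in $y$ uniformly in $t$; hence the difference of the two $\tilde{R}_\delta^{(0,1)}$-operators is bounded, between the relevant $\BC$-spaces, by a constant times $\|\hat u^{\eta_-}(s,y_0)-\hat u^{\eta_-}(s,z_0)\|_{\eta_-,s}\le 2C_\varepsilon\|y_0-z_0\|$ (\Cref{thm:contraction}), and multiplying by the $s$-uniform operator-norm bounds for $\mathcal{K}_s^{l\eta_-}$ and $\mathcal{K}_s^{(l+1)\eta_-}$ (\Cref{prop:ketas}) and by the uniform bound on $\|w^{(l)}(s,y_0)\|$ yields an $s$-independent Lipschitz bound. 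For the second summand, $H^{(l)}(s,\cdot)$ is a finite sum of products of maps that are either constant in $y_0$ ($\mathcal{K}_s^{l\eta_-}$) or both uniformly bounded and $s$-uniformly Lipschitz in $y_0$ ($\tilde{R}_\delta^{(0,q)}(\hat u^{\eta_-}(s,\cdot))$, as just argued, and $\hat u^{\eta_-,(r_i)}(s,\cdot)$, by the two recorded facts), so by the product rule for Lipschitz maps $H^{(l)}(s,\cdot)$ is Lipschitz with an $s$-independent constant. Combining, $w^{(l)}(s,\cdot)$ is Lipschitz uniformly in $s$, whence (applying $\eva_s$) so is $D_2^l\mathcal{C}(s,\cdot)$, which closes the induction with $L(l)$ the resulting constant.

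The step I expect to be the main obstacle is the bookkeeping of the exponential weights. Forming the difference of two $\hat u^{\eta_-}$'s costs, pointwise in $t$, a factor $\sim e^{\eta_-|t-s|}$, so the difference $\tilde{R}_\delta^{(0,1)}(\hat u^{\eta_-}(s,y_0))-\tilde{R}_\delta^{(0,1)}(\hat u^{\eta_-}(s,z_0))$ only maps $\BC_s^{l\eta_-}$ into $\BC_s^{(l+1)\eta_-}$, so the difference estimates above naturally land in $\BC_s^{(l+1)\eta_-}$ rather than in $\BC_s^{l\eta_-}$. This is harmless for the conclusion, since $\eva_s$ is bounded out of every $\BC_s^\eta$, but it forces $\mathcal{K}_s^{(l+1)\eta_-}$ (which, by the exponent-independence of $\mathcal{K}$, is the operator actually applied) to be well-defined — arranged by choosing at the outset $\eta_->0$ with $(k+1)\eta_-<\min\{-a,b\}$, a harmless strengthening of $k\eta_-<\eta_+$ — and it forces $\mathcal{K}_s^{(l+1)\eta_-}\circ\tilde{R}_\delta^{(0,1)}(\,\cdot\,)$ to be a contraction in the $\BC_s^{(l+1)\eta_-}$-norm, which one secures by shrinking $\delta$ once more so that $L_\delta\|\mathcal{K}_s^{(l+1)\eta_-}\|<\tfrac12$ for all $l\in\{0,\dots,k\}$ and all $s\in\mathbb{R}$ (possible since $L_\delta\to0$ as $\delta\downarrow0$ by \Cref{prop:global lipschitz}). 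With these uniformities in place the induction runs through routinely.
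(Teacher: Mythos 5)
Your proposal is correct and follows essentially the same route as the paper: the paper's proof is just the observation that each $\hat u^{\eta_-,(l)}$ solves a uniform-contraction fixed-point problem (from the proof of \Cref{thm:smoothnesscmt}) and then ``the same strategy as \Cref{lemma:lipschitzCMT}'', which is exactly the subtract-and-absorb induction you carry out. Your additional care with the exponential-weight shift (landing in $\BC_s^{(l+1)\eta_-}$, hence choosing $\eta_-$ with $(k+1)\eta_-<\min\{-a,b\}$ and shrinking $\delta$ once more) and with the $s$-uniform bounds on the lower-order derivatives fills in bookkeeping the paper leaves implicit, and does so correctly.
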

		\begin{proof}
			For $l = 0$, the result is already proven in \Cref{lemma:lipschitzCMT}. Now let $l \in \{1,\dots,k\}$. Then, from the proof of \Cref{thm:smoothnesscmt} we see that $\hat{u}^{\eta_{-},(l)}$ is the unique solution of a fixed point problem, where the right hand-side is a contraction with a Lipschitz constant $L(l)$ independent of $s$. Using the same strategy as the proof of \Cref{lemma:lipschitzCMT}, we obtain the desired result. 
		\end{proof}
		
		\bibliographystyle{siamplain}
		\bibliography{references}
		
	\end{sloppypar}
\end{document}